\documentclass[twoside,11pt]{amsart}
\usepackage{graphicx}
\usepackage{epstopdf}
\usepackage{amsmath, amsfonts, amssymb, ifthen}




\usepackage{caption}
\usepackage{subcaption}
\usepackage[justification=centering]{caption}
\usepackage[colorlinks=true]{hyperref}


\newtheorem{theorem}{Theorem}[section]
\newtheorem{corollary}[theorem]{Corollary}
\newtheorem{main}{Main Theorem}
\newtheorem{lemma}[theorem]{Lemma}
\newtheorem{proposition}[theorem]{Proposition}

\theoremstyle{definition}
\newtheorem{definition}[theorem]{Definition}

\newtheorem*{notation}{Notation}

\DeclareMathOperator\Arg{Arg} 

\newcommand{\mandel}{\mathcal{M}}
\newcommand{\setU}{\mathbf{U}}
\newcommand{\ellipse}{\mathcal{E}}
\newcommand{\setWa}{\mathbf{W}_{n,c}}

\newcommand{\setWck}{\mathbf{W}_{n,a,k}}
\newcommand{\setS}{\mathbf{S}_{t}}
\newcommand{\setUp}{\mathcal{U}}
\newcommand{\setWp}{\mathcal{W}_{n,a}}

\newcommand{\newellipse}{\mathcal{L}}

\begin{document}

\title{The boundedness locus and baby Mandelbrot sets for some generalized Mc{M}ullen maps}

\author{Suzanne Boyd}

\address{Department of Mathematical Sciences,
University of Wisconsin Milwaukee,
PO Box 413\\
Milwaukee, Wisconsin 53201 USA\\
sboyd@uwm.edu}

\author{Alexander J.\ Mitchell}

\address{Physical Sciences and Mathematics Department,
Wayne State College,
1111 Main Street\\
Wayne, Nebraska 68787 USA\\
almitch1@wsc.edu}

\date{\today}


\begin{abstract}
In this paper we study rational functions of the form 

\noindent \mbox{$R_{n,a,c}(z) = z^n + \dfrac{a}{z^n} + c,$} 
with $n$ fixed and at least $3$, and hold either $a$ or $c$ fixed while the other varies. We locate some homeomorphic copies of the Mandelbrot set in the $c$-parameter plane for certain ranges of $a$, as well as in the $a$-plane for some $c$-ranges.

We use techniques first introduced by Douady and Hubbard in \cite{douhub} that were applied for the subfamily \mbox{$R_{n,a,0}$} by Devaney in \cite{dhalo}.  These techniques involve polynomial-like maps of degree two.
\end{abstract}

      \maketitle
      
      \markboth{\textsc{S. Boyd and A. Mitchell}}
  {\textit{Mandelbrot for generalized McMullen maps}}
   

\footnotetext[1]{010 MSC: Primary: 37F10; Secondary: 37F46. Keywords: Complex Dynamical Systems, Mandelbrot set, Polynomial-Like Map, Rational Map, Iteration}

\footnotetext[2]{We would like to thank Robert Devaney and Laura DeMarco for helpful conversations, and Brian Boyd for the computer program ``Dynamics Explorer" which generated all of the Mandelbrot and Julia images in this paper.}

\section{Introduction}
\label{sec:introduction}

As a simple starting example we consider the family of quadratic polynomials
$$
P_c(z) = z^2 + c,~c \in \mathbb{C}.
$$
We define the \textit{Fatou set} of $P_c$ in the typical way, as the set of values in the domain where the iterates of $P_c$ is a normal family in the sense of Montel. The \textit{Julia set} of $P_c$ is also defined the usual way as the complement to the Fatou set. The \textit{filled Julia set} is the union of the Julia set and the bounded Fatou components.

The \textit{Mandelbrot Set}, $\mandel$, is the set of $c$-values such that the critical orbit of $P_c$ is bounded, here that is the orbit of $0$.  Figure \ref{Mandelbrotone} (left) is the Mandelbrot set drawn in the $c$-parameter plane of $P_c$.  For other functions, the set of parameter values where at least one critical orbit is bounded will be called the \textit{boundedness locus}.

\begin{figure}[htbp]
\centering
\includegraphics[width=.45\textwidth,keepaspectratio]{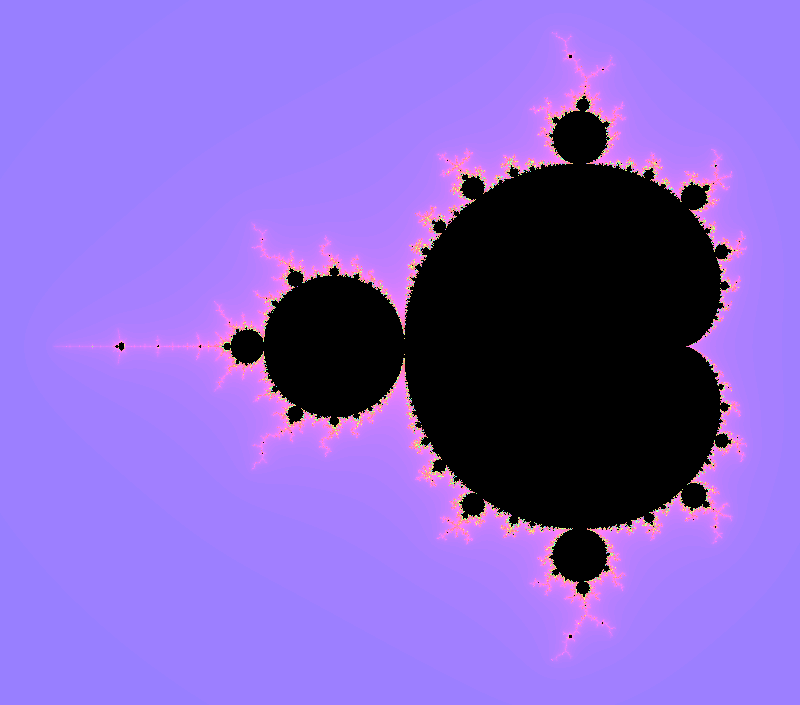}
\includegraphics[width=.45\textwidth,keepaspectratio]{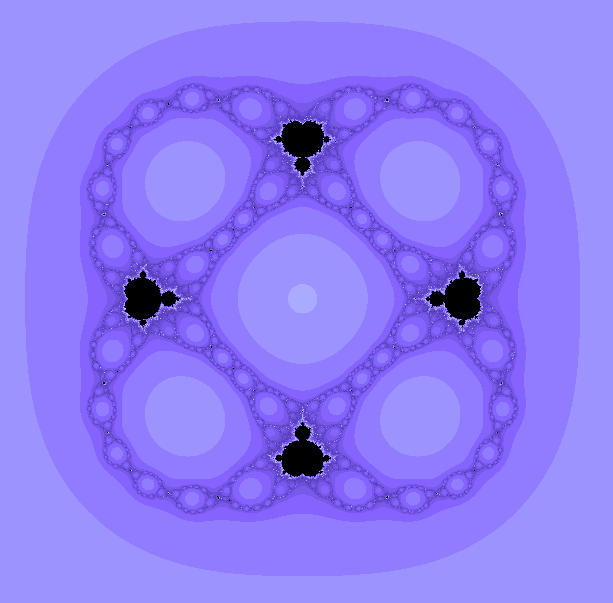}
\caption{Parameter Planes of $P_c$ (left) and $R_{5,a,0}$ (right).}
\label{Mandelbrotone}
\end{figure}

The study of the Mandelbrot set has become more accessible as computers have advanced.  Adrien Douady and John Hubbard were able to show that this set can result from other iterative processes as well, in \cite{douhub}.  They showed that multiple homeomorphic copies of the Mandelbrot set occur when Newton's Method is applied to a cubic polynomial family with a single parameter, and defined what it means for a map to behave like $P_c$, calling such a map {\em polynomial-like of degree two} (see Section \ref{preliminaries_section}). Mc{M}ullen (\cite{mcmullen}) shows that every non-empty bifurcation locus of any analytic family will contain quasiconformal copies of the Mandelbrot set of $P_c$ (or of $z^n + c$, based on the multiplicity of critical points), but in this paper we will use Douady and Hubbard's approach to prove that Mandelbrot set copies exist in some specific locations in some parameter planes, for the following family. 

The family of functions of interest in this paper is:
$$
R_{n,a,c}(z) = z^n + \dfrac{a}{z^n} + c~,~n \in \mathbb{N},~a \in \mathbb{C}\backslash \lbrace0\rbrace,~c \in \mathbb{C}.
$$
In this article, we restrict to integers $n \geq 3$.

This family, including the subfamily with $c=0$, has been studied previously by Robert Devaney and colleagues, as well as the first author and colleagues. In \cite{boydschul}, Boyd and Schulz study the geometric limit as $n\to \infty$ of Julia sets  and of the boundedness locus, for $R_{n,c,a}$ for any complex $c$ and any complex, non-zero $a$.  
Devaney and Garijo in \cite{devgar} study Julia sets as the parameter $a$ tends to $0$, for the cases of $n,d\geq 2$, and $c=0$. 
In \cite{bdgr} and \cite{devkoz}, the authors study the family in the case of $c$ at the center of a hyperbolic component of the Mandelbrot set for $P_c$ (that is, the critical point is a fixed point). 
 For $n\geq 2$, Devaney and colleagues study the subfamily with $c=0$, ``McMullen maps", in papers such as \cite{dhalo} and \cite{devsurv}. Our goal in this article is to generalize to the case $c\neq 0$ their result establishing the location of $n-1$ homeomorphic copies of the Mandelbrot set in the boundedness locus in the $a$-parameter plane of $R_{n,a,0}$ (see Figure \ref{Mandelbrotone} (right) for an example). 

In \cite{dhalo} and \cite{jangso} the authors find $n$ homeomorphic copies of Mandelbrot sets for a different generalization of McMullen Maps, $z \mapsto z^n + \dfrac{a}{z^d}$. 

We note that in \cite{xiaoqiu}, Xiao, Qiu, and Yongchen establish a topological description of the Julia sets (and Fatou components) of $R_{n,a,c}$ according to the dynamical behavior of the orbits of its free critical points. This work includes a result that if there is a critical component of the filled Julia set which is periodic, then the Julia set consists of infinitely many homeomorphic copies of a quadratic Julia set, and uncountably many points. In order to find baby Mandelbrot sets in our parameter planes of interest, we will first locate baby Julia sets, but using different techniques (based on specific parameter ranges rather than the type of dynamical behavior). 

Here, we consider the case where $c \neq 0$ (but $n=d$), and find homeomorphic copies of the Mandelbrot set in both the $a$ and $c$-parameter planes of $R_{n,a,c}$. Our main results are as follows.

\begin{main}
\label{Main_Theorem_APlane}
For the set of $n$ and $c$ values below, the boundedness locus in the $a$-parameter plane of $R_{n,a,c}$ contains a homeomorphic copy of the Mandelbrot set in the subset $\setWa$:
\begin{enumerate}
\item[(i)] $n\geq 3$ and $-1 \leq c \leq 0$;
\item[(ii)] odd $n \geq 3$ and $0 \leq c \leq 1$. 
\end{enumerate}
\end{main}

Item (i) is established in Theorem \ref{V+Mandel_APlane_Cnegative_theorem}, Item (ii) is shown in Corollary \ref{APlane_V-_Corollary}.   See Equation~\ref{eqn:defnW} for the definition of the set $\setWa$. 


\begin{main}
\label{Main_Theorem_CPlane}
For the set of $n$ and $a$ values below, the boundedness locus in the $c$-parameter plane of $R_{n,a,c}$ contains one or more homeomorphic copies of the Mandelbrot set, as follows. 
\begin{enumerate}
\item[(i)] For $n\geq 5$ and $1 \leq a \leq 4$, there are $n$ baby Mandelbrot sets, one in each subset $\setWck$ for $k \in \lbrace 0,1,...,n-1 \rbrace$; 
\\ if $n$ is odd there are at least $2n$, one within each $\setWck$ and one within its reflection over the imaginary axis;
\item[(ii)] For  $n \geq 11$ and $\frac{1}{10} \leq a \leq 1$, there is a baby Mandelbrot set in  $\setWp$;  if $n$ is odd there are at least two.
\end{enumerate}
\end{main}

Item (i) is established in Theorems~\ref{v+_Mandels_exist_multiple_cplane} and \ref{v-_Mandels_exist_multiple_through_symmetry_theorem}. 
See Equation~\ref{eqn:defnWck} for the definition of $\setWck$.

Item (ii) is shown in Theorem~\ref{Tighter_radius_mandel_exists_in_cPlane_Theorem} and 
Corollariy~\ref{v-_mandel_exists_in_cPlane_all_aValues_corollary}.
See Equation~\ref{eqn:defnWp} for the definition of $\setWp$.

To establish these results, we will take advantage of the many symmetries present in the family $R_{n,a,c}$. Our proof will follow the same general outline as in the case of $c=0$, but some additional complexities must be dealt with when $c\neq0$; for instance, there are multiple critical orbits to track.

We now discuss how the parameter planes of $R_{n,a,c}$ are drawn. With multiple critical orbits, it is more complicated than drawing $\mandel$ of $P_c$. To draw an $a$(or $c$)-parameter plane of $R_{n,a,c}$ we first fix a value of $n$ and $c$ (or $a$).  Then using each critical orbit, we color every point in the picture of the parameter plane as follows. 

First we assign a color (preferably unique) to each critical orbit.  Since there are two critical orbits here, $v_+$ and $v_-$, we assign green and purple, respectively.  For each parameter value in the picture we test both critical orbits for boundedness and assign a color for each orbit.  If the critical orbit is bounded we assign black. Else, if it escapes we assign that critical orbit's unique color, shaded based on rate of escape as is typical; that is, the shade of the color depends on the number of iterations it took for the orbit to escape a pre-defined escape radius.

Once the testing is complete, each parameter value has two RGB colors values assigned.  The computer will then \emph{average} the two values at each point, resulting in a single assigned color for that parameter value.

Therefore a parameter value with both critical orbits bounded will be colored black; a parameter with the critical orbit of $v_+$ bounded while $v_-$ escapes is colored dark purple and vice-versa is colored dark green; a parameter with both critical orbits escaping will be colored with the RGB average of the two colors. Note purple and green average to gray, and the colors only truly average if the rate of escapes match - if one escapes more slowly, that color is more intense, so it shades the grey toward purple or green.   Figures \ref{aPlane_Mandelbrot_Example_Figure} and \ref{cPlane_Multiple_mandelbrot_example} give examples of this coloring scheme used to draw the $a$ and $c$-parameter planes, respectively.

\begin{figure}
\centering
\includegraphics[width=.75\textwidth,keepaspectratio]{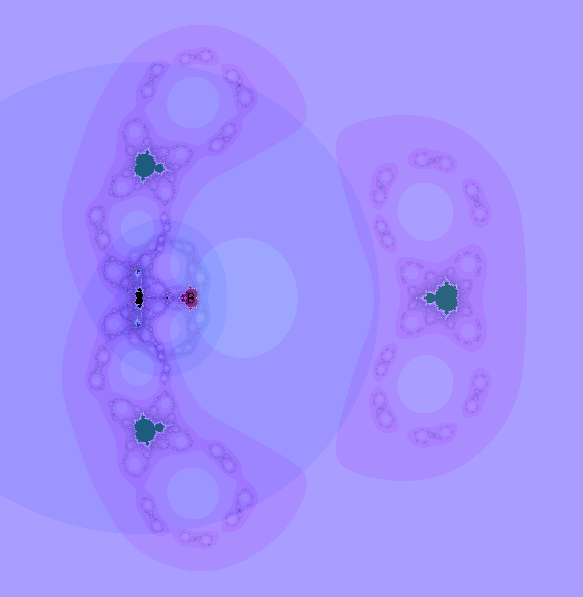} 
\caption{\label{aPlane_Mandelbrot_Example_Figure} The $a$-parameter plane of $R_{n,a,c}$ for $n=5$ and $c=0.5$. Each pixel is assigned two RGB values which are then averaged, one for each of the two critical values $v_+$ and $v_-$. If an orbit is bounded the color is black. If $v_+$ escapes the color assigned is green, for $v_-$ escaping it is purple. }
\end{figure}

\begin{figure}
\centering
\includegraphics[width=.75\textwidth,keepaspectratio]{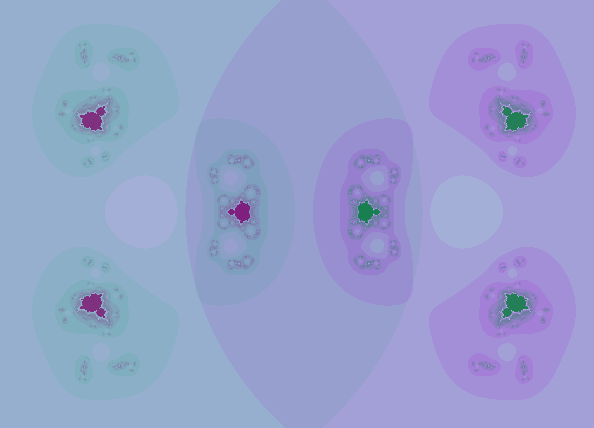} 
\caption{\label{cPlane_Multiple_mandelbrot_example} The $c$-parameter plane of $R_{n,a,c}$ for $n=3$ and $a=0.5$, colored in the same way as Figure~\ref{aPlane_Mandelbrot_Example_Figure}.}
\end{figure}

We close this introduction by previewing the organizaton of the sections. In Section \ref{preliminaries_section} we provide some background information, including Douady and Hubbard's criteria to prove existence of a Mandelbrot set in a region in parameter space, as well as some basic properties of the family $R_{n,a,c}$.  Section \ref{Main_One_Section} contains the main body of work needed to prove Main Theorem \ref{Main_Theorem_APlane}, in the $a$-plane. In Section \ref{Main_Two_Section} we turn to the $c$-plane and provide the proof of Main Theorem \ref{Main_Theorem_CPlane}-(i). 
 Finally, in section \ref{Extend_Results_Section} we remain in the $c$-plane but since $a=0$ is a degenerate case, we push toward results for smaller $a$-values - and prove Main Theorem \ref{Main_Theorem_CPlane}-(ii), and provide some additional results about situations in which baby Mandelbrot sets overlap.

\section{Preliminaries}
\label{preliminaries_section}

\begin{notation} The Mandelbrot set will be denoted throughout by $\mandel$, and we refer to a homeomorphic copy of $\mandel$ as a \textbf{baby $\mandel$}.
\end{notation}

To establish the existence of baby $\mandel$'s in a region in a parameter plane, we will use the definition of a polynomial-like map given by Douady and Hubbard:

\begin{definition}
\label{Polynomial-like_definition}
\cite{douhub} A map $F: \setU' \rightarrow F(\setU')=\setU$ is \textbf{polynomial-like} if
\begin{itemize}
\item $\setU'$ and $\setU$ are bounded, open, simply connected subsets of $\mathbb{C}$,
\item $\setU'$ relatively compact in $\setU$,
\item $F$ is analytic and proper.
\end{itemize}

Further $F$ is polynomial-like of \textbf{degree two} if $F$ is a $2$-to-$1$ map except at finitely many points, and $\setU'$ contains a unique critical point of $F$.

The \textbf{filled Julia set of a polynomial-like map} is the set of points whose orbits remain in $\setU'$:
 $\left\{z \in \setU' ~ \middle| ~ F^k(z) \in \setU', \forall k \in \mathbb{N} \right\}$.
\end{definition}

For a map satisfying this Definition \ref{Polynomial-like_definition}, Douady and Hubbard showed the following:

\begin{theorem}
\label{DH_Ploynomial-Like_theorem}
\cite{douhub}
A polynomial-like map of degree two is topologically conjugate on its filled Julia set to a quadratic polynomial on that polynomial's filled Julia set.
\end{theorem}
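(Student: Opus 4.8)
The plan is to prove this by the classical quasiconformal surgery of Douady and Hubbard (this is their ``straightening theorem''). The idea is to modify $F$ outside a neighbourhood of its filled Julia set $K_F$ so as to obtain a quasiregular branched self-cover $G$ of the Riemann sphere $\widehat{\mathbb C}$ of degree two that agrees with $F$ near $K_F$, is holomorphic near $\infty$, and has $\infty$ as a superattracting fixed point whose basin is exactly $\widehat{\mathbb C}\setminus K_F$; then the Measurable Riemann Mapping Theorem produces a quasiconformal change of coordinates conjugating $G$ to an honest rational map, which is forced to be quadratic. A preliminary step makes the boundary data nice: since $F\colon\setU'\to\setU$ is proper of degree two, for a real-analytic Jordan curve in $\setU\setminus\overline{\setU'}$ bounding a domain $V$ with $\overline{\setU'}\subset V\Subset\setU$ and missing the critical value of $F$, the set $V'=F^{-1}(V)$ is relatively compact in $\setU'$, has real-analytic boundary, and $F\colon V'\to V$ is again polynomial-like of degree two with the same filled Julia set. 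So I may assume $\partial\setU'$ and $\partial\setU$ are disjoint real-analytic Jordan curves, that $F$ is real-analytic up to $\partial\setU'$, and that $K_F\Subset\setU'$.

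Next I would build the extension. Fix an affine chart in which $\setU$ is bounded and choose $R$ with $\overline{\setU}\subset\{|z|<\sqrt R\}$. Then $A=\{|z|\le\sqrt R\}\setminus\setU'$ is a closed topological annulus; on its inner boundary $\partial\setU'$ the map $F$ is a real-analytic degree-two covering onto $\partial\setU$, while on the outer circle $|z|=\sqrt R$ the map $z\mapsto z^2$ is a degree-two covering onto $|z|=R$. Uniformizing $A$ and the target annulus $\{|z|\le R\}\setminus\setU$ to round annuli and interpolating radially between these two boundary maps, keeping the angular degree equal to two throughout, yields an unbranched quasiregular degree-two covering of $A$ onto $\{|z|\le R\}\setminus\setU$ realizing the prescribed boundary values, with dilatation bounded because the data are real-analytic. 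Set $G=F$ on $\overline{\setU'}$, $G=$ this interpolation on $A$, and $G(z)=z^2$ on $\{|z|\ge\sqrt R\}$. The resulting $G\colon\widehat{\mathbb C}\to\widehat{\mathbb C}$ is quasiregular of degree two, holomorphic on $\overline{\setU'}$ and on $\{|z|>\sqrt R\}$, with critical points only at the critical point of $F$ and at $\infty$; moreover $G^{-1}(\setU)=\setU'$, so $\widehat{\mathbb C}\setminus\overline{\setU'}$ is forward invariant. From this one checks that the filled Julia set $K_G$ of $G$ (as a sphere map) equals $K_F$, that $G|_{K_F}=F|_{K_F}$, and that every orbit outside $K_F$ passes through $A$ only finitely often before entering, and remaining in, $\{|z|>\sqrt R\}$, where it tends to $\infty$.

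Finally I would straighten. Since $A$ is compact, disjoint from $K_F$, and $\widehat{\mathbb C}\setminus\overline{\setU'}$ is forward invariant, a compactness argument gives an $N$ such that no $G$-orbit meets $A$ more than $N$ times. Let $\sigma_0$ denote the standard complex structure and define a $G$-invariant Beltrami coefficient $\mu$ by $\mu=0$ on $K_F$ and, for $z$ in the escaping set, $\mu(z)=$ the Beltrami coefficient at $z$ of any iterate $G^k$ with $G^k(z)\in\{|z|>\sqrt R\}$; this is well defined because $G$ is holomorphic on $\{|z|>\sqrt R\}$ and on $\overline{\setU'}$, and it satisfies $G^\ast\mu=\mu$. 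As $G$ is holomorphic off $A$, the dilatation of $G^k$ at any point is at most $K_0^{\,N}$, where $K_0$ bounds the dilatation of $G$ on $A$, so $\|\mu\|_\infty<1$. By the Measurable Riemann Mapping Theorem there is a quasiconformal homeomorphism $\phi\colon\widehat{\mathbb C}\to\widehat{\mathbb C}$ with $\mu_\phi=\mu$, which I normalize to fix $\infty$ and to make the conjugate monic and centred. Then $Q:=\phi\circ G\circ\phi^{-1}$ has vanishing Beltrami coefficient, hence is holomorphic, so a rational map of degree two; as $\phi(\infty)$ is a fixed point of local degree two it is totally invariant, forcing $Q=P_c$ for some $c\in\mathbb C$. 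Thus $\phi$ conjugates $G$ to $P_c$ on all of $\widehat{\mathbb C}$, so it restricts to a homeomorphism $K_F=K_G\to K_{P_c}$ conjugating $F|_{K_F}=G|_{K_F}$ to $P_c|_{K_{P_c}}$, which is the assertion.

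I expect the main obstacle to be the construction of $G$: arranging the boundary regularity, carrying out the annular interpolation so that $G$ is genuinely quasiregular of degree two with uniformly bounded dilatation, and verifying its dynamics, namely $K_G=K_F$ with every other orbit escaping. Once that is in place, the uniform bound $N$ on the number of visits to $A$ makes $\|\mu\|_\infty<1$ automatic, and the remaining steps are standard applications of the Measurable Riemann Mapping Theorem together with elementary facts about degree-two rational maps.
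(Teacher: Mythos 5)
The paper does not prove this statement at all: Theorem~\ref{DH_Ploynomial-Like_theorem} is quoted from Douady--Hubbard \cite{douhub}, so your attempt has to be judged against the classical straightening argument, and your overall strategy is indeed that argument (extend $F$ to a degree-two quasiregular map of the sphere that is holomorphic near $K_F$ and near $\infty$, build an invariant Beltrami coefficient, straighten with the Measurable Riemann Mapping Theorem, and identify the result as $z^2+c$).

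There is, however, a genuine gap exactly where you yourself flag ``the main obstacle.'' Your interpolation region $A=\{|z|\le\sqrt R\}\setminus\setU'$ is not a fundamental annulus: its image $\{|z|\le R\}\setminus\setU$ meets $A$ in the whole region $\{|z|\le\sqrt R\}\setminus\overline{\setU}$, so orbits may re-enter $A$. Everything downstream --- $K_G=K_F$, the uniform bound $N$ on visits to $A$, $\|\mu\|_\infty<1$, and even the final identification $\phi(K_F)=K_{P_c}$ --- rests on the unproved assertion that every $G$-orbit outside $K_F$ eventually enters $\{|z|>\sqrt R\}$. Your compactness argument does not supply this: it only upgrades \emph{pointwise} escape on the compact set $A$ to a uniform bound, and pointwise escape is precisely what is missing. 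It can genuinely fail for a map with your prescribed boundary data: post-composing any admissible interpolation with a quasiconformal homeomorphism of the target annulus that is the identity on its boundary again gives a degree-two quasiregular covering with the same boundary values, and such a post-composition can create a fixed (even attracting) point in $\{|z|\le\sqrt R\}\setminus\overline{\setU}\subset A$, in which case $K_G\supsetneq K_F$ and $\mu$ is not even defined on the trapped orbits. The classical construction avoids this structurally rather than dynamically: one interpolates only on the fundamental annulus $\setU\setminus\overline{\setU'}$, or equivalently builds one quasiconformal homeomorphism $\Psi$ of $\mathbb{C}\setminus\setU'$ onto $\{|w|\ge R\}$ with $\Psi(F(z))=(\Psi(z))^2$ on $\partial\setU'$ and sets $G=\Psi^{-1}\circ(\,\cdot\,)^2\circ\Psi$ off $\setU'$; then $G$ is globally conjugate to $z^2$ outside $\setU'$, escape is automatic, and one takes $\mu=\Psi^\ast\sigma_0$ there, so no pullback of $\sigma_0$ through the non-holomorphic region is ever needed. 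A smaller slip in the same spirit: in your preliminary reduction the domain $V$ must \emph{contain} the critical value of $F$ (the curve merely avoids it); if $V$ omits it, $F^{-1}(V)$ is two disjoint disks mapped univalently and the restriction is no longer polynomial-like of degree two.
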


We will use this result later to locate homeomorphic copies of the filled Julia sets of $P_c$  in some particular dynamical planes of $R_{n,a,c}$.  

Douady and Hubbard provided criteria under which a family of polynomial-like functions possesses a baby $\mandel$ in a region $W$:

\begin{theorem}
\label{DH_Mandelbrot_existence_Criterion_Theorem}
Assume we are given a family of polynomial-like maps $F_\lambda: \setU'_\lambda \rightarrow \setU_\lambda$ that satisfies the following: 
\begin{itemize}
\item $\lambda$ is in an open set in $\mathbb{C}$ which contains a closed disk $W$;
\item The boundaries of $\setU'_\lambda$ and $\setU_\lambda$ vary analytically as $\lambda$ varies;
\item The map $(\lambda, z) \mapsto F_\lambda(z)$ depends analytically on both $\lambda$ and $z$;
\item Each $F_\lambda$ is polynomial-like of degree two with a unique critical point $c_\lambda$ in $\setU'$.
\end{itemize}

Suppose for all $\lambda \in \partial W$ that $F_\lambda(c_\lambda) \in \setU - \setU'$ and that $F_\lambda(c_\lambda)$ makes a closed loop around the outside of $\setU'$ as $\lambda$ winds once around $\partial W$.  

If all this occurs, then the set of $\lambda$-values for which the orbit of $c_\lambda$ does not escape from $\setU'$ is homeomorphic to the Mandelbrot set.
\end{theorem}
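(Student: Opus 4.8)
The plan is to build the \emph{straightening} (Douady--Hubbard) map $\chi\colon W\to\mathbb{C}$ sending $\lambda$ to the unique parameter $c=\chi(\lambda)$ for which $F_\lambda$ is hybrid equivalent to $P_c$, and then to show that $\chi$ restricts to a homeomorphism of $\chi^{-1}(\mandel)$ onto $\mandel$. For a fixed $\lambda$ the existence of $\chi(\lambda)$ is the content of Theorem~\ref{DH_Ploynomial-Like_theorem}: since $\setU'_\lambda$ is relatively compact in $\setU_\lambda$, the fundamental annulus $\setU_\lambda\setminus\overline{\setU'_\lambda}$ has positive modulus, and on it one interpolates between $F_\lambda$ and the model $z\mapsto z^2$ to obtain a quasiregular degree-two branched self-cover $g_\lambda$ of $\widehat{\mathbb{C}}$; pulling back the standard conformal structure by the forward iterates of $g_\lambda$ gives a $g_\lambda$-invariant Beltrami coefficient $\mu_\lambda$ supported off the filled Julia set of $F_\lambda$, and integrating $\mu_\lambda$ conjugates $g_\lambda$ to a genuine quadratic polynomial, which after affine normalization is $P_{\chi(\lambda)}$ (the conjugacy is unique since two affinely conjugate monic centered quadratics are equal). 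The key observation is the identification
\[
\bigl\{\lambda\in W:\ \text{the orbit of }c_\lambda\text{ under }F_\lambda\text{ stays in }\setU'_\lambda\bigr\}=\chi^{-1}(\mandel),
\]
because a degree-two polynomial-like map has connected filled Julia set exactly when its critical orbit does not escape $\setU'_\lambda$, and the straightening conjugacy identifies this with connectivity of the filled Julia set of $P_{\chi(\lambda)}$, i.e.\ with $\chi(\lambda)\in\mandel$. So it remains to analyze $\chi$.

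\textbf{Holomorphy and properness of $\chi$.} The first real step is to show $\chi$ is holomorphic on $W$; this is where all the analyticity hypotheses enter. Because $F_\lambda$, $\partial\setU_\lambda$ and $\partial\setU'_\lambda$ depend analytically on $\lambda$, the interpolation can be arranged so that $g_\lambda$, and hence the invariant Beltrami coefficient $\mu_\lambda$, depends holomorphically on $\lambda$; the Ahlfors--Bers theorem with holomorphic dependence on parameters then yields holomorphic dependence of the integrating maps, whence $c=\chi(\lambda)$, read off as the normalized critical value of the straightened map, is holomorphic in $\lambda$. On $\partial W$ the hypothesis $F_\lambda(c_\lambda)\in\setU_\lambda\setminus\setU'_\lambda$ says the critical orbit leaves $\setU'_\lambda$ in one step, so $\chi(\partial W)\cap\mandel=\emptyset$; by continuity a neighborhood of $\partial W$ in $W$ is mapped into $\mathbb{C}\setminus\mandel$, so $\chi^{-1}(\mandel)$ is a compact subset of the interior of $W$ and, for a small enough neighborhood $V\supseteq\mandel$, the restriction $\chi\colon\chi^{-1}(V)\to V$ is proper.

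\textbf{From winding around $\setU'_\lambda$ to winding around $\mandel$.} The second step translates the geometric hypothesis. When $F_\lambda(c_\lambda)$ lies in $\setU_\lambda\setminus\overline{\setU'_\lambda}$ the critical orbit of $P_{\chi(\lambda)}$ also escapes at the first step, and uniformizing the fundamental annulus and matching it with the B\"ottcher coordinate of $P_{\chi(\lambda)}$ near $\partial\mandel$ shows that the position of $F_\lambda(c_\lambda)$ relative to $\setU'_\lambda$ determines, in an orientation-preserving way, the position of $\chi(\lambda)$ relative to $\mandel$ in the Douady--Hubbard uniformization $\mathbb{C}\setminus\mandel\cong\mathbb{C}\setminus\overline{\mathbb{D}}$ (connectivity of $\mandel$, \cite{douhub}). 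Hence the hypothesis that $F_\lambda(c_\lambda)$ makes one loop around $\setU'_\lambda$ as $\lambda$ winds once around $\partial W$ says precisely that $\chi|_{\partial W}$ has winding number $1$ about every point in a neighborhood of $\mandel$ (we may shrink $V$ so this holds on all of $V$).

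\textbf{Conclusion and main obstacle.} Since $\chi$ is holomorphic on $W$ and continuous up to $\partial W$, the argument principle applied to $\chi-w$ shows that for every $w\in V$ the number of preimages of $w$ in $W$, counted with multiplicity, equals the winding number of $\chi|_{\partial W}$ about $w$, namely $1$. Thus each $w\in V$ has a single preimage at which $\chi'\neq 0$, so $\chi$ is a biholomorphism of $\chi^{-1}(V)$ onto $V$; restricting it to $\chi^{-1}(\mandel)$ gives the asserted homeomorphism onto $\mandel$. I expect the main obstacle to be the first step, namely arranging the quasiconformal surgery to depend holomorphically on $\lambda$ (so that the measurable Riemann mapping theorem can be applied with parameters and $\chi$ is holomorphic), together with making the orientation-preserving correspondence of the third paragraph precise; once those are in hand, the remainder is a clean application of the argument principle.
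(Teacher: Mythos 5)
The paper itself offers no proof of Theorem \ref{DH_Mandelbrot_existence_Criterion_Theorem}: it is quoted as a known criterion of Douady and Hubbard \cite{douhub}, so your proposal has to be measured against their argument. Your setup matches theirs (straightening, the identification of the non-escaping locus with $\chi^{-1}(\mandel)$, and the translation of the winding hypothesis into a degree-one statement), but the load-bearing step of your plan is not available: the straightening map $\chi$ cannot in general be arranged to be holomorphic in $\lambda$. The invariant Beltrami coefficient $\mu_\lambda$ is built by pulling back the standard structure under the iterates of the interpolated quasiregular map $g_\lambda$, and these pullbacks involve the antiholomorphic derivative data of the $\lambda$-dependent interpolation on the fundamental annulus; no choice of interpolation makes $\lambda\mapsto\mu_\lambda$ holomorphic, and indeed it is a standard remark that straightening is in general only quasiconformal, not conformal, in its dependence on the map. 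If your conclusion held, every baby $\mandel$ produced this way (including the small copies inside $\mandel$ itself obtained by renormalization) would be a conformal copy of $\mandel$, which is not the case. What is actually true, and is itself a nontrivial theorem of Douady--Hubbard special to degree two (it fails in higher degree, where straightening is not even continuous), is that $\chi$ is continuous.

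With holomorphy gone, your final paragraph collapses: the argument principle cannot be applied, and continuity plus the winding-number-one hypothesis yields only surjectivity of $\chi$ onto $\mandel$ (via a topological degree argument using a ``tubing'' that extends $\chi$ continuously to the parameters where the critical orbit escapes), not injectivity. Injectivity is the heart of the Douady--Hubbard proof and requires separate arguments: on hyperbolic components one uses the holomorphic dependence and local injectivity of the multiplier of the attracting cycle, and on the remaining parameters a quasiconformal-deformation (rigidity) argument; continuity of $\chi$ at boundary parameters also needs its own proof rather than following from parameter-holomorphic Ahlfors--Bers. So the concrete gap is the sentence ``the interpolation can be arranged so that $g_\lambda$, and hence $\mu_\lambda$, depends holomorphically on $\lambda$'': that step fails, and everything downstream (properness via the argument principle, $\chi'\neq 0$, biholomorphism onto a neighborhood of $\mandel$) would have to be replaced by the tubing/degree argument together with the separate injectivity and continuity arguments of \cite{douhub}.
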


Theorem \ref{DH_Mandelbrot_existence_Criterion_Theorem} is key to establishing the location of some baby $\mandel$'s, like we see in Figures \ref{aPlane_Mandelbrot_Example_Figure} and \ref{cPlane_Multiple_mandelbrot_example}.

Because $\infty$ is a super-attracting fixed point of $R_{n,a,c}$, as it is for $P_c$, we can define
the {\bf filled Julia set of} $R_{n,a,c}$ as  the set of points whose orbits do not escape to $\infty$.

One thing to note about $R_{n,a,c}$ is that it has $2n$ critical points, $a^{\frac{1}{2n}}$. Though this could make it difficult to observe all critical orbits, it turns out that each of the critical points map to one of two values, $v_{\pm} = c \pm 2\sqrt{a}$.  Thus there are only two free critical orbits no matter the value of $n$.  Later we will study the effect of these two critical orbits.

We will exploit the following involution symmetry of $R_{n,a,c}$, to not only locate where the Julia set lies, but also to establish some cases in which $R_{n,a,c}$ is polynomial-like.

\begin{lemma}
\label{involution_prop}
$R_{n,c,a}$ is symmetric under the involution map $h_{a}(z)=\dfrac{a^\frac{1}{n}}{z}$.
\end{lemma}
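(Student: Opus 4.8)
The plan is to prove the identity $R_{n,a,c}\circ h_a = R_{n,a,c}$ by a direct substitution, and then read off the asserted symmetry from it. The point is that $h_a$ does not merely preserve the defining expression formally: because $R_{n,a,c}\bigl(h_a(z)\bigr)=R_{n,a,c}(z)$, the two points $z$ and $h_a(z)$ have the same forward orbit from the first iterate onward, so any orbit‑based set (the escaping set, the filled Julia set, the Julia set) is carried to itself by $h_a$. I would first note that $h_a$ is indeed an involution: for any fixed choice of $n$‑th root $a^{1/n}$, $h_a\bigl(h_a(z)\bigr)=a^{1/n}\big/\bigl(a^{1/n}/z\bigr)=z$, and $h_a$ extends to a conformal involution of the Riemann sphere interchanging $0$ and $\infty$.

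The core computation is short. Put $w=h_a(z)=a^{1/n}/z$ for $z\in\mathbb{C}\setminus\{0\}$. Then
\[
w^{\,n}=\frac{a}{z^{\,n}}, \qquad \frac{a}{w^{\,n}}=a\cdot\frac{z^{\,n}}{a}=z^{\,n},
\]
and substituting these into the definition gives
\[
R_{n,a,c}(w)=w^{\,n}+\frac{a}{w^{\,n}}+c=\frac{a}{z^{\,n}}+z^{\,n}+c=R_{n,a,c}(z).
\]
Hence $R_{n,a,c}\bigl(h_a(z)\bigr)=R_{n,a,c}(z)$ on $\mathbb{C}\setminus\{0\}$, and on all of $\widehat{\mathbb{C}}$ by continuity. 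Note that only the relation $(a^{1/n})^{n}=a$ is used, so the identity is independent of the branch chosen for $a^{1/n}$.

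From this I would conclude: since $R_{n,a,c}^{\,k}\bigl(h_a(z)\bigr)=R_{n,a,c}^{\,k}(z)$ for all $k\ge 1$, the orbit of $h_a(z)$ escapes to $\infty$ exactly when the orbit of $z$ does, so the filled Julia set and the Julia set of $R_{n,a,c}$ are invariant under $h_a$; the excluded points $0$ and $\infty$, which $h_a$ swaps, both lie in the basin of the superattracting fixed point at $\infty$, so nothing is lost. There is no substantive obstacle here — the only things that require a word of care are the multivaluedness of $a^{1/n}$ (harmless, as just noted) and the behavior at $z=0,\infty$ (handled by passing to $\widehat{\mathbb{C}}$). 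This identity is exactly what will later let us relocate the dynamics near the critical points and verify the polynomial‑like hypotheses.
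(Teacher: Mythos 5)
Your core computation—substituting $w=a^{1/n}/z$ and using $w^n=a/z^n$, $a/w^n=z^n$—is exactly the paper's proof, a one-line direct verification that $R_{n,a,c}\circ h_a=R_{n,a,c}$. The additional remarks on branch independence, the involution property, and invariance of the Julia set are correct but go beyond what the lemma asserts; the argument itself matches the paper's.
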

\begin{proof}
$$
R_{n,c,a}(h_{a}(z))=\left(\frac{a^\frac{1}{n}}{z}\right)^n+\dfrac{a}{\left(\frac{a^\frac{1}{n}}{z}\right)^n}+c=\dfrac{a}{z^{n}}+z^{n}+c=R_{n,c,a}(z).
$$
\end{proof}

This symmetry will be used in both cases of the $a$ and $c$ parameter planes.

We will also use the following notation:

\begin{notation}
$\mathbb{D}(z_0,r)$ represents the disc $\left\{z ~ \middle|~ \lvert z - z_0 \rvert < r \right\}$.
\end{notation}

\begin{notation}
$\mathbb{A}(r,R)$ represents the annulus $\left\{z ~ \middle|~ r < \lvert z \rvert < R \right\}$.
\end{notation}


\section{The Case of $c$ fixed, $a$ varying}
\label{Main_One_Section}


In this section we establish Main Theorem~\ref{Main_Theorem_APlane}. Throughout, we will be under the following parameter restrictions:

\hangindent=0.7cm
\begin{itemize}
\item $n\geq 3$,
\smallskip
\item $\lvert c \rvert \leq 1$ and $c \in \mathbb{R}$,
\smallskip
\item $\dfrac{c^2}{4}\leq ~\lvert a \rvert~\leq \left( 1-\dfrac{c}{2}\right)^2$.

\end{itemize}

\subsection{Dynamical Plane Results}

Within these parameters we will restrict the location of the Julia set of $R_{n,a,c}$ (in Lemma \ref{EscapeAnnulus_lemma_aplaneCase}).  After that, we prove $R_{n,a,c}$ is polynomial-like of degree two (in Proposition ~\ref{R_Polynomial_like_on_first_U'_aplane_prop}).

First we take advantage of a result from \cite{boydschul}:

\begin{lemma}
\label{Julia_Set_Restriction_lemma}
\cite{boydschul}
For any $c \in \mathbb{C}$ and any $a \in \mathbb{C}$, given any $\epsilon > 0$, there is an $N \geq 2$ such that for all $n \geq N$ the filled Julia set of $R_{n,a,c}$ must lie in $\mathbb{D}(0,1+\epsilon)$, the disk of radius $1 + \epsilon$ centered at the origin.
\end{lemma}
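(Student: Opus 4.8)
The plan is to prove the statement directly: I will show that for $n$ large, every point of modulus at least $1+\epsilon$ has an orbit escaping to $\infty$, which by the definition of the filled Julia set forces that set to be contained in $\mathbb{D}(0,1+\epsilon)$. The underlying principle is that on the region $\{\,|z|\ge 1+\epsilon\,\}$ the monomial $z^n$ dominates the other two terms of $R_{n,a,c}$: its modulus is at least $(1+\epsilon)^n$, which grows without bound in $n$, whereas $|a/z^n|\le |a|/(1+\epsilon)^n\le |a|$ there and $c$ is a fixed constant.

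First I would fix $c$, $a$, and $\epsilon>0$ and establish a uniform expansion estimate. With $r=|z|\ge 1+\epsilon\ge 1$, the triangle inequality gives
$$
|R_{n,a,c}(z)|\;\ge\;|z|^n-\frac{|a|}{|z|^n}-|c|\;\ge\;r^n-|a|-|c|,
$$
and since $r^n-2r=r\bigl(r^{n-1}-2\bigr)\ge (1+\epsilon)\bigl((1+\epsilon)^{n-1}-2\bigr)\to\infty$ as $n\to\infty$, I may choose $N\ge 2$ so large that $(1+\epsilon)\bigl((1+\epsilon)^{N-1}-2\bigr)\ge |a|+|c|$. Then for every $n\ge N$ and every $z$ with $|z|\ge 1+\epsilon$ we obtain $|R_{n,a,c}(z)|\ge 2|z|$.

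With this in hand the conclusion follows from the standard escape-radius argument. The estimate gives in particular $|R_{n,a,c}(z)|\ge 2|z|\ge 2(1+\epsilon)>1+\epsilon$, so the region $\{\,|z|\ge 1+\epsilon\,\}$ is forward invariant under $R_{n,a,c}$; hence by induction $|R_{n,a,c}^{\,k}(z)|\ge 2^{k}|z|\to\infty$ for every $z$ in that region, so no such $z$ lies in the filled Julia set. Therefore the filled Julia set is contained in $\mathbb{D}(0,1+\epsilon)$, as claimed.

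I do not expect a real obstacle here. The only points needing attention are that the a priori bound $|a/z^n|\le|a|$ relies on $|z|\ge 1$ — which is why the argument is carried out on the region $\{\,|z|\ge 1+\epsilon\,\}$ rather than on a thin annulus about the unit circle — and that one must verify forward invariance of this region before iterating the expansion bound; both are handled above, and the rest is a routine use of the triangle inequality.
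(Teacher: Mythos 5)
Your argument is correct and is essentially the same standard escape-radius estimate as the cited result: on $\{\lvert z\rvert\ge 1+\epsilon\}$ the term $z^n$ dominates $a/z^n$ and $c$, giving uniform expansion, forward invariance, and escape to $\infty$. The paper itself does not reprove this lemma but simply cites \cite{boydschul}, quoting the explicit criterion $(1+\epsilon)^N > 3\,\mathrm{Max}\{1,\lvert a\rvert,\lvert c\rvert\}$, which is the same triangle-inequality argument with a slightly different (equally usable) threshold for $N$ than your condition $(1+\epsilon)\bigl((1+\epsilon)^{N-1}-2\bigr)\ge \lvert a\rvert+\lvert c\rvert$.
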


This happens as the orbit of any point outside a radius of $1+\epsilon$ escapes to $\infty$, thus such a point with this behavior is not in the filled Julia set.  We apply this result to our case of restrictions on $n$, $a$, and $c$.

\begin{lemma}
\label{EscapeRadius_lemma_aplaneCase}
For $n \geq 3$, $\lvert c \rvert \leq 1$, and $\dfrac{c^2}{4}\leq ~\lvert a \rvert~\leq \left( 1-\dfrac{c}{2}\right)^2$, the filled Julia set of $R_{n,a,c}$ lies in the closed disk of radius $2$ centered at the origin.
\end{lemma}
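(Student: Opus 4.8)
The plan is to show that outside the closed disk of radius $2$ the leading term $z^n$ dominates the other two terms $a/z^n$ and $c$ by a definite multiplicative factor, so that $\lvert R_{n,a,c}\rvert$ grows geometrically under iteration; this forces every such orbit to $\infty$ and, since the filled Julia set is by definition the set of non-escaping points, confines it to $\overline{\mathbb{D}(0,2)}$.

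First I would extract from the standing hypotheses the only estimates actually needed: $\lvert c\rvert\le 1$ is given, and from $\lvert a\rvert\le\bigl(1-\tfrac{c}{2}\bigr)^2$ together with $\lvert c\rvert\le 1$ one gets $\lvert a\rvert\le\bigl\lvert 1-\tfrac{c}{2}\bigr\rvert^{2}\le\bigl(1+\tfrac{\lvert c\rvert}{2}\bigr)^{2}\le\tfrac94$. (The lower bound $\tfrac{c^{2}}{4}\le\lvert a\rvert$ is irrelevant for this lemma and is merely carried along from the standing restrictions of the section.) Next, for $\lvert z\rvert=r>2$ I would apply the triangle inequality to $R_{n,a,c}(z)=z^{n}+az^{-n}+c$:
\[
\lvert R_{n,a,c}(z)\rvert\ \ge\ r^{n}-\frac{\lvert a\rvert}{r^{n}}-\lvert c\rvert\ \ge\ r^{n}-\frac{9/4}{r^{n}}-1 .
\]
Since $n\ge 3$ and $r>2$ we have $r^{n}\ge r^{3}>8$, hence $\tfrac{9/4}{r^{n}}<\tfrac{9}{32}$, so the right-hand side exceeds $r^{3}-\tfrac{41}{32}$. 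A short check then shows $r^{3}-\tfrac{41}{32}\ge 2r$ for all $r\ge 2$: this is equivalent to $r^{3}-2r\ge\tfrac{41}{32}$, and $r\mapsto r^{3}-2r$ is increasing on $[2,\infty)$ with value $4$ at $r=2$. Thus $\lvert R_{n,a,c}(z)\rvert\ge 2\lvert z\rvert$ whenever $\lvert z\rvert>2$ (any constant strictly above $1$ would do; $2$ is convenient).

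Finally I would iterate this self-improving estimate: if $\lvert z\rvert>2$ then $\lvert R_{n,a,c}(z)\rvert\ge 2\lvert z\rvert>4>2$, so the bound applies again, and by induction $\lvert R_{n,a,c}^{k}(z)\rvert\ge 2^{k}\lvert z\rvert\to\infty$. Hence every orbit starting outside $\overline{\mathbb{D}(0,2)}$ escapes to $\infty$, so the filled Julia set of $R_{n,a,c}$ is contained in $\overline{\mathbb{D}(0,2)}$.

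The main obstacle here is essentially nonexistent — this is a routine dominance estimate — and the only care needed is to pick a single geometric constant valid \emph{uniformly} over all $n\ge 3$ and over the full allowed ranges of $a$ and $c$, and to verify that the estimate is self-improving (the image again lies outside radius $2$) so that the conclusion is genuine escape rather than merely monotone growth of the modulus.
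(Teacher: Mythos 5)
Your proof is correct, and it takes a genuinely different route from the paper. The paper does not argue directly: it invokes the escape-radius criterion from Boyd--Schulz (Lemma \ref{Julia_Set_Restriction_lemma}), namely that $(1+\epsilon)^N > 3\max\{1,\lvert a\rvert,\lvert c\rvert\}$ forces escape outside radius $1+\epsilon$ for all $n\ge N$, sets $\epsilon=1$, bounds $3\max\{1,\lvert a\rvert,\lvert c\rvert\}\le 3\cdot\tfrac94=6.75$, and checks $2^n>6.75$ for $n\ge 3$. You instead prove the dominance estimate from scratch: the triangle inequality gives $\lvert R_{n,a,c}(z)\rvert\ge r^n-\tfrac{9/4}{r^n}-1\ge 2r$ for $r>2$ (using $\lvert a\rvert\le(1+\tfrac{\lvert c\rvert}{2})^2\le\tfrac94$, which matches the paper's bound), and you verify the estimate is self-improving so that induction yields $\lvert R^k_{n,a,c}(z)\rvert\ge 2^k\lvert z\rvert\to\infty$; correctly noting that the lower bound $\tfrac{c^2}{4}\le\lvert a\rvert$ plays no role here. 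What your version buys is self-containedness and an explicit geometric growth rate, with no reliance on the external quantitative lemma; what the paper's version buys is brevity and uniformity, since the same cited criterion is reused verbatim in the later escape-radius lemmas (Lemmas \ref{EscapeRadius_lemma_cplaneCase} and \ref{TighterEscapeRadius}) with only the numerical check changing.
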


\begin{proof}
The proof of Lemma \ref{Julia_Set_Restriction_lemma} in \cite{boydschul} says that if $N$ satisfies $(1+\epsilon)^N > 3 \text{Max} \lbrace 1,\lvert a \rvert, \lvert c \rvert \rbrace$ then for $n \geq N$ we have an escape radius of $1+\epsilon$. That is, the orbits of values $\lvert z \rvert > 1+\epsilon$ tend to $\infty$.  Setting $\epsilon=1$, by our constraints on $a$ and $c$, we have:
$$
3\text{Max} \lbrace 1,\lvert a \rvert, \lvert c \rvert \rbrace = 3\text{Max} \lbrace 1, \left( 1-\dfrac{c}{2}\right)^2, 1 \rbrace = 3(2.25) = 6.75
$$
for $a$ and $c$ at their greatest moduli. So when we solve this equation for $N$, we find $N > \dfrac{\ln(6.75)}{\ln(2)} \approx 2.75$, thus $n \geq 3$ will satisfy the criterion.  Therefore, the orbit of any $\lvert z \rvert > 2$ will escape to $\infty$ under iteration by $R_{n,a,c}$, hence the filled Julia set must lie in $\mathbb{D}(0,2)$.
\end{proof}

Combining this with Lemma \ref{involution_prop} restricts further the location of the filled Julia set of $R_{n,a,c}$.

\begin{lemma}
\label{EscapeAnnulus_lemma_aplaneCase}
With the same assumptions on $n$, $a$, and $c$ as Lemma \ref{EscapeRadius_lemma_aplaneCase}, the filled Julia set of $R_{n,a,c}$ lies within the annulus $\mathbb{A}\left(\dfrac{\lvert a \rvert^{\frac{1}{n}}}{2} ,~2\right)$.
\end{lemma}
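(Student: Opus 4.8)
The plan is to combine Lemma~\ref{EscapeRadius_lemma_aplaneCase}, which already confines the filled Julia set $K$ of $R_{n,a,c}$ to the closed disk $\overline{\mathbb{D}(0,2)}$, with the involution symmetry of Lemma~\ref{involution_prop}. The first point to pin down is that $0 \notin K$: since $R_{n,a,c}$ has a pole at the origin, $0$ is sent to $\infty$, which is a super-attracting fixed point, so the orbit of $0$ escapes and $0$ lies in the basin of infinity rather than in $K$. Hence $K \subseteq \overline{\mathbb{D}(0,2)} \setminus \{0\}$.

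Next I would invoke $R_{n,a,c}\circ h_a = R_{n,a,c}$ with $h_a(z) = a^{1/n}/z$, where the branch of $a^{1/n}$ is fixed but arbitrary (note $h_a$ is an involution for every branch, so the ambiguity is harmless). A symmetry of this form conjugates $R_{n,a,c}$ to itself and fixes $\infty$ only in the sense of swapping $\infty$ with $0$, but more to the point it carries escaping orbits to escaping orbits and bounded orbits to bounded orbits; therefore the filled Julia set is completely invariant, $h_a(K) = K$. A one-line computation shows that for $z \in \overline{\mathbb{D}(0,2)}\setminus\{0\}$ one has $|h_a(z)| = |a|^{1/n}/|z| \geq |a|^{1/n}/2$, so $h_a$ maps $\overline{\mathbb{D}(0,2)}\setminus\{0\}$ into $\{\,w : |w| \geq |a|^{1/n}/2\,\}$. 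Applying this to $K$ and using $h_a(K)=K$ yields $K \subseteq \{\,w : |w| \geq |a|^{1/n}/2\,\}$.

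Combining the two confinements places $K$ in the closed annulus $|a|^{1/n}/2 \leq |w| \leq 2$, which is the assertion (up to the boundary circles, which can be absorbed into a slightly larger outer radius and slightly smaller inner radius, since the escape statement of Lemma~\ref{EscapeRadius_lemma_aplaneCase} is really about points strictly outside radius $2$). One should also check the annulus is nondegenerate: the hypothesis $|a| \leq (1-c/2)^2 \leq (3/2)^2 = 9/4$ gives $|a|^{1/n}/2 < 2$ for $n \geq 3$. I do not expect any genuine obstacle here; the only steps needing a careful word are the exclusion of $0$ from $K$ and the fact that complete invariance of $K$ under $h_a$ follows from the conjugacy in Lemma~\ref{involution_prop} together with $h_a$ being an involution.
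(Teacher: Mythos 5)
Your proposal is correct and takes essentially the same route as the paper: the paper likewise combines Lemma~\ref{EscapeRadius_lemma_aplaneCase} with the involution of Lemma~\ref{involution_prop}, arguing pointwise that any $\lvert z\rvert \le \lvert a\rvert^{1/n}/2$ satisfies $R_{n,a,c}(z)=R_{n,a,c}(h_a(z))$ with $\lvert h_a(z)\rvert \ge 2$ and hence escapes, which is just the unpacked form of your invariance statement $h_a(K)=K$. (Your word ``conjugates'' is a slight misnomer---$R_{n,a,c}\circ h_a=R_{n,a,c}$ is a symmetry, not a conjugacy---but the orbit-tail argument you actually use is exactly right, and the paper is no more careful than you are about the open versus closed boundary circles.)
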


\begin{proof}
Given any $\lvert z \rvert \leq \dfrac{\lvert a \rvert^{\frac{1}{n}}}{2}$ and the involution symmetry of Lemma \ref{involution_prop}, then
$\lvert R_{n,a,c}(z) \rvert \geq 2$ and thus the orbits of these values also escape to $\infty$.  Therefore the filled Julia is a subset of $\mathbb{A} \left( \dfrac{\lvert a \rvert^{\frac{1}{n}}}{2}, 2 \right)$.
\end{proof}

Figure \ref{Julia_set_in_Annulus_Figure} shows a Julia set of $R_{n,a,c}$ lying in this annulus.  We see various black shapes appearing in this dynamical plane and will actually prove below that these shapes are homeomorphic copies of a filled Julia set of a $P_{c}$. (The one in the figure appears to be a baby basilica $K_{-1}$ for which the critical value lies in a period two cycle). This occurs because $R_{n,a,c}$ is polynomial-like of degree two on those regions, which we prove in Proposition \ref{R_Polynomial_like_on_first_U'_aplane_prop}.

\begin{figure}
\centering
\includegraphics[width=0.75\textwidth, keepaspectratio]{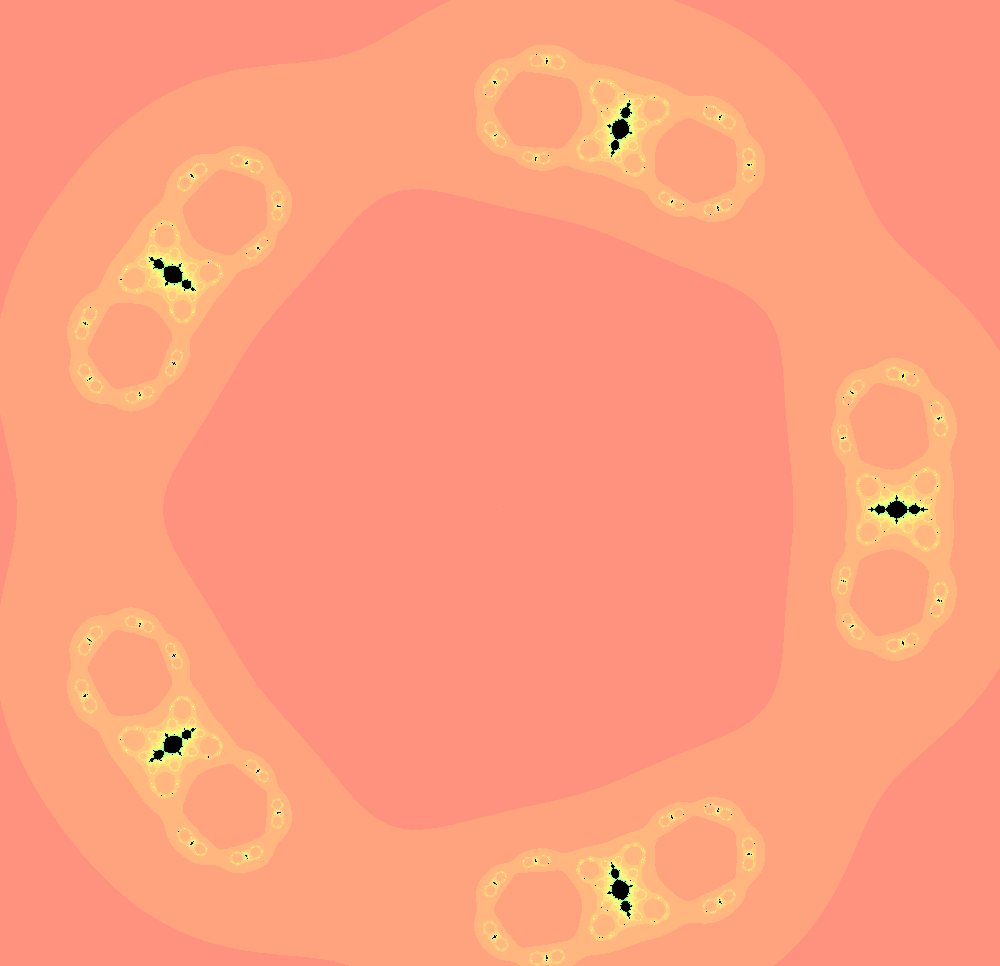} 
\caption{\label{Julia_set_in_Annulus_Figure} The Julia set of $R_{n,a,c}$ with $n=5$, $a \approx 0.7$, and $c = -.75$.}
\end{figure}

Now we define the region on which we will show $R_{n,a,c}$ is polynomial-like of degree two: 
\begin{equation}
\label{UPrime_Equation_Definition}
\boxed{
\setU' = \setU'_{n,a}=\left\{  z=re^{i\theta} \   \middle|  \ \ \frac{\lvert a \rvert^{\frac{1}{n}}}{2}<~r~<2~~and~~\dfrac{\psi-\pi}{2n}<~\theta~<\dfrac{\psi+\pi}{2n} \right\}
}
\end{equation}

where $\psi = \Arg(a)$, and we set
\begin{equation}
\label{U_Equation_Definition}
\boxed{
\setU = \setU_{n,c,a} = R_{n,c,a}(\setU'_{n,a})}~.
\end{equation}

We see that $\setU'$ is slice of $\mathbb{A}\left( \dfrac{\lvert a \rvert^{\frac{1}{n}}}{2},2 \right)$ so it contains a portion the Julia set of $R_{n,a,c}$. $\setU'$ also contains exactly one of the critical points of $R_{n,a,c}$, specifically $\lvert a \rvert^{1 / 2n}e^{\psi / 2n}$, since $\dfrac{\lvert a \rvert^{\frac{1}{n}}}{2}<~\lvert a \rvert^{1 / 2n}~<2$ is true for $\lvert a \rvert < 2^{2n}$.  The range of $a$ we work in is well below that threshold.  The argument of the critical point, $\dfrac{\psi}{2n}$, is the midpoint of the angular range of $\setU'$.  The rest of the critical points of $R_{n,a,c}$ are spread out in intervals of $\dfrac{\pi}{n}$ radians and these don't fall within the angular interval of $\left( \dfrac{\psi - \pi}{2n}~,~\dfrac{\psi + \pi}{2n}\right)$. Thus $\setU'$ contains a unique critical point of $R_{n,a,c}$ and we have established one of the criteria of Definition \ref{Polynomial-like_definition}.

To satisfy the rest of Definition \ref{Polynomial-like_definition}, we start by describing $\setU$ more precisely:

\begin{lemma}
\label{uhalfellipse_lemma}
$\setU$ is half an ellipse centered at $c$ and rotated by $\psi /2$.
\end{lemma}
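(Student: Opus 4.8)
The plan is to compute the image $\setU = R_{n,c,a}(\setU')$ directly, exploiting the polar coordinates in which $\setU'$ was defined. Write $z = re^{i\theta}$ with $|a|^{1/n}/2 < r < 2$ and $(\psi-\pi)/(2n) < \theta < (\psi+\pi)/(2n)$, where $\psi = \Arg(a)$. The key substitution is to set $w = z^n = r^n e^{in\theta}$; as $\theta$ runs over its interval, $n\theta$ runs over an interval of length $\pi$ centered at $\psi/2$, so $w$ ranges over (a piece of) a half-plane through the origin whose boundary line has argument $\psi/2 \pm \pi/2$. Then $R_{n,c,a}(z) = w + a/w + c$, so up to the translation by $c$ we must understand the image of the Joukowski-type map $w \mapsto w + a/w$ on this half-disk-like region.

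First I would normalize away the parameter $a$. Writing $a = |a| e^{i\psi}$ and substituting $w = \sqrt{a}\,\zeta = |a|^{1/2} e^{i\psi/2}\zeta$, one gets $w + a/w = \sqrt{a}\,(\zeta + 1/\zeta)$, so $R_{n,c,a}(z) - c = \sqrt{a}\,(\zeta + 1/\zeta)$ where $\zeta$ ranges over a region symmetric about the real axis (the rotation by $\psi/2$ has been factored out into the multiplier $\sqrt a$). It is classical that the Joukowski map $\zeta \mapsto \zeta + 1/\zeta$ carries circles $|\zeta| = \rho$ to ellipses with foci $\pm 2$ and carries rays $\arg\zeta = \text{const}$ to confocal hyperbolas, and that it maps a half-plane $\{\Re\zeta > 0\}$ (or its complement) two-to-one onto… no — in fact on a half-disk bounded by an arc of $|\zeta|=\rho$ and a diameter, the image is a half-ellipse. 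I would check that the angular slit in $\setU'$ of total width $\pi/n$ in $\theta$, i.e. width $\pi$ in $n\theta$, corresponds after the $w = z^n$ and $w = \sqrt a\,\zeta$ substitutions to $\zeta$ lying in a half-plane, so that $\zeta + 1/\zeta$ (being even under $\zeta \mapsto 1/\zeta$, which swaps the two halves of the annulus $\{1/R < |\zeta| < R\}$) is injective on that half and sweeps out exactly half of the full ellipse that $\zeta+1/\zeta$ would produce from the whole annulus. Multiplying by $\sqrt a$ rotates this half-ellipse by $\psi/2$, and adding $c$ recenters it at $c$; that is precisely the claim.

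The main obstacle I anticipate is bookkeeping the boundary correspondence carefully enough to be sure the image is exactly a half-ellipse and not, say, a half-ellipse with a bite taken out or an extra flap — i.e. verifying injectivity of $R_{n,c,a}$ on $\setU'$ and identifying which diameter of the ellipse forms the straight edge of the half. The even symmetry $\zeta \mapsto 1/\zeta$ of the Joukowski map is exactly the involution $h_a$ of Lemma \ref{involution_prop} transported through $w = z^n$, and $\setU'$ was built as a fundamental domain cut by the ray at angle $\psi/2n$ meeting the circle of radius $|a|^{1/2n}$ (the fixed-point set of $h_a$), so injectivity should follow cleanly from that; still, one must confirm the radial range $|a|^{1/n}/2 < r < 2$ maps to a genuine annular range $1/R < |\zeta| < R$ symmetric under inversion, which uses the standing assumption $c^2/4 \le |a| \le (1 - c/2)^2$ to control the endpoints. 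Once the boundary arcs (two confocal-ellipse arcs from the radial endpoints, joined by a segment of the focal axis from the angular cut) are identified, the conclusion that $\setU$ is half an ellipse centered at $c$ and rotated by $\psi/2$ is immediate.
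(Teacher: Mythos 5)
Your normalization $w=z^{n}$, $\zeta=w/\sqrt{a}$, which turns $R_{n,a,c}-c$ into $\sqrt{a}\,(\zeta+1/\zeta)$ on a half-annulus, is essentially the paper's own computation repackaged (the paper simply carries out the resulting trigonometric parametrization of the boundary by hand), so the route itself is sound. However, two of your verification steps are wrong as stated. First, $\zeta+1/\zeta$ is \emph{not} injective on the half-annulus $\{1/R<|\zeta|<R,\ \operatorname{Re}\zeta>0\}$: the inversion $\zeta\mapsto 1/\zeta$ (which is the involution $h_{a}$ of Lemma \ref{involution_prop} transported through your substitutions) satisfies $\operatorname{Re}(1/\zeta)=\operatorname{Re}(\zeta)/|\zeta|^{2}$, so it \emph{preserves} the right half-plane rather than swapping the two angular halves, and hence maps this half-annulus to itself; the Joukowski map is therefore two-to-one there. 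That is not a defect to be argued away---it is exactly the degree-two polynomial-like structure established in Proposition \ref{R_Polynomial_like_on_first_U'_aplane_prop}. Fortunately injectivity is not needed to identify the image: each circle $|\zeta|=\rho$ meets the region in the arc with $\cos\phi>0$, whose image is the half of the corresponding confocal ellipse with positive abscissa, and the union of these over $1\le\rho<R$ is the open right half of the largest elliptical region, which after multiplying by $\sqrt{a}$ and adding $c$ is the claimed half-ellipse rotated by $\psi/2$ and centered at $c$.

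Second, your anticipated boundary picture---``two confocal-ellipse arcs from the radial endpoints, joined by a segment of the focal axis''---is incorrect and inconsistent with your own reciprocal-radius setup. Since the inner radius $|a|^{1/n}/2$ of $\setU'$ is by construction the $h_{a}$-image of the outer radius $2$, the two radial boundary arcs become the circles $|\zeta|=R$ and $|\zeta|=1/R$ with $R=2^{n}/\sqrt{|a|}$, and these map onto the \emph{same} ellipse $\ellipse$, not two distinct confocal ones; this reciprocity is automatic from the definition of $\setU'$ and does not use the standing bounds on $|a|$ and $c$ (beyond $|a|<4^{n}$). Moreover, the angular cut $n\theta=\frac{\psi}{2}\pm\frac{\pi}{2}$ corresponds to $\zeta$ purely imaginary, whose image $i\left(t-1/t\right)$ lies on the imaginary axis: the straight edge of $\setU$ is the \emph{minor} axis of $\ellipse$, not the focal (major) axis. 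This identification is not cosmetic, since Lemma \ref{noaxisintercept} and Proposition \ref{R_Polynomial_like_on_first_U'_aplane_prop} rely on the cut being the minor axis through $c$. With these two corrections your argument does prove the lemma, by the same mechanism as the paper's direct computation.
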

\begin{proof}
Ignoring the restriction on argument, we consider the set 
$$
\left\{ R_{n,c,a} \left( 2e^{i\theta} \right) \ \middle| \ \ 0\leq \theta \leq 2\pi \right\}.
$$ 
This set contains the image of the outer and inner arcs of $\setU'$ by Lemma \ref{involution_prop}. Because we are considering all angles of $\theta$, our set is independent of the starting angle.  We can apply an angular shift and the image set will remain the same, thus we instead consider the set
$$
\left\{ R_{n,c,a} \left( 2e^{i\left(\theta+\frac{\psi}{2n}\right)} \right) \ \middle| \ \ 0\leq \theta \leq 2\pi \right\}.
$$
So
\begin{eqnarray}
& ~ &R_{n,c,a}\left(2e^{i\left(\theta+\frac{\psi}{2n}\right)}\right) \notag\\
& = & \left(2*\exp \left(i\left(\theta+\frac{\psi}{2n}\right)\right) \right) ^{n}+\frac{a}{\left( 2*\exp \left(i\left(\theta+\frac{\psi}{2n}\right)\right) \right) ^{n}}+c \notag \\
& = & 2^{n}*\exp \left(i\left(n\theta+\frac{\psi}{2}\right)\right)+\frac{\lvert a \rvert e^{i\psi}}{2^{n}*\exp \left(i\left(n\theta+\frac{\psi}{2}\right)\right)}+c\notag \\
& = & 2^{n}*\exp \left(i\left(n\theta+\frac{\psi}{2}\right)\right)+\frac{\lvert a \rvert}{2^{n}}*\exp \left(i\left(\psi - \left(n\theta+\frac{\psi}{2}\right)\right)\right)+c \notag \\
& = & e^{i\frac{\psi}{2}}\left(2^{n}e^{in\theta}+\frac{\lvert a \rvert}{2^{n}}e^{-in\theta}\right)+c \notag \\
 & = & e^{i\frac{\psi}{2}}\left(2^{n}\left(\cos\left( n\theta \right)+i\sin \left(n\theta \right) \right)+\frac{\lvert a \rvert}{2^{n}}\left( \cos \left(n\theta \right)-i\sin \left(n\theta \right)\right)\right)+c \notag \\ 
\label{ellipse} & = & e^{i\frac{\psi}{2}} \left( \left(2^{n}+\frac{\lvert a \rvert}{2^{n}}\right)\cos(n\theta)+i\left(2^{n}-\frac{\lvert a \rvert}{2^{n}}\right)\sin(n\theta) \right) + c.
\end{eqnarray}

Note the above is of the form $e^{i\frac{\psi}{2}} \left( x + iy \right) + c,~\text{where:}$

\begin{equation} 
\label{paraellipse}
\begin{array}{lcl}
& x= & \left(2^{n}+\frac{\lvert a \rvert}{2^{n}}\right)\cos(n\theta) \\
& y= & \left(2^{n}-\frac{\lvert a \rvert}{2^{n}}\right)\sin(n\theta).
\end{array}
\end{equation}
Compare this to the parametric equation of an ellipse centered at the origin:
\begin{eqnarray*}
& x= & b\cos(\phi) \\
& y= & d\sin(\phi)
\end{eqnarray*}
where $0\leq \phi \leq 2\pi$, $b$ is half the length of the major axis and $d$ is half the length of the minor axis.  These axes lie respectively on the real and imaginary axes of the complex plane.

Thus the equation set \eqref{paraellipse} is an ellipse centered at the origin with a major axis length of $2\left( 2^{n}+\frac{\lvert a \rvert}{2^{n}}\right)$, and a minor axis length $2\left( 2^{n}-\frac{\lvert a \rvert}{2^{n}}\right)$ that wraps around $n$ times. Going back to \eqref{ellipse} we find our image set to be the ellipse described above, rotated by $\psi /2$ and centered at $c$.  By our independence of starting angle, this gives us equality to the first set described, and $\left\{ R_{n,c,a}(2e^{i\theta}) \ \middle| \ \ 0\leq \theta \leq 2\pi \right\}$ is this exact ellipse as well. 

 \textbf{Hence we define the ellipse $\ellipse$ by Equation \eqref{ellipse}}.


Now we look at the image of the rays $\displaystyle re^{i\frac{\psi \pm \pi}{2n}}$ for $\frac{\lvert a \rvert^{\frac{1}{n}}}{2}<~r~<2$.

\begin{eqnarray*}
R_{n,c,a} \left(re^{i\frac{\psi \pm \pi}{2n}} \right) & = & \left( r*\exp\left(i\frac{\psi \pm \pi}{2n}\right) \right) ^{n}+\frac{a}{\left( r*\exp\left(i\frac{\psi \pm \pi}{2n}\right) \right) ^{n}}+c  \\
& = & r^{n}*\exp\left(i\frac{\psi \pm \pi}{2}\right)+\frac{\lvert a \rvert e^{i\psi}}{r^{n}*\exp\left(i\frac{\psi \pm \pi}{2}\right)}+c \\
& = & r^{n}*\exp\left(i\frac{\psi \pm \pi}{2}\right)+\frac{\lvert a \rvert}{r^{n}}*\exp\left(i\frac{\psi \mp \pi}{2}\right)+c \\
& = & \exp \left(i\frac{\psi}{2}\right) \left(r^{n}*\exp\left(\pm i\frac{\pi}{2}\right)+\frac{\lvert a \rvert}{r^{n}}*\exp\left(\mp i\frac{\pi}{2}\right)\right)+c \\
& = & \pm e^{i\frac{\psi}{2}}\left(r^{n} - \frac{\lvert a \rvert}{r^{n}}\right)i+c.
\end{eqnarray*}

This is a line segment on the imaginary axis from $-\left(2^{n} - \frac{\lvert a \rvert}{2^{n}}\right)i$ to 
\\ $\left(2^{n} - \frac{\lvert a \rvert}{2^{n}}\right)i$, rotated by $\psi /2$, then shifted by $c$.  In fact, this is actually the minor axis of $\ellipse$.

Finally we investigate the original restriction of $\theta$, 
$$
\dfrac{\psi-\pi}{2n}<~\theta~<\dfrac{\psi+\pi}{2n},
$$
and find that \eqref{paraellipse} gives us 
$$
\dfrac{\psi}{2}-\dfrac{\pi}{2} < n\theta < \dfrac{\psi}{2}+\dfrac{\pi}{2}.
$$ 
We see the angular range is $\pi$ radians in size, hence yielding half an ellipse.  Combine this curve with the minor axis of $\ellipse$ (the image of the rays) and we have that $\setU$ is a half ellipse rotated by $\psi/2$ and centered at $c$. (See Figure \ref{Ellipse_Figure})
\end{proof}

\begin{figure}
\centering
\includegraphics[scale=.5]{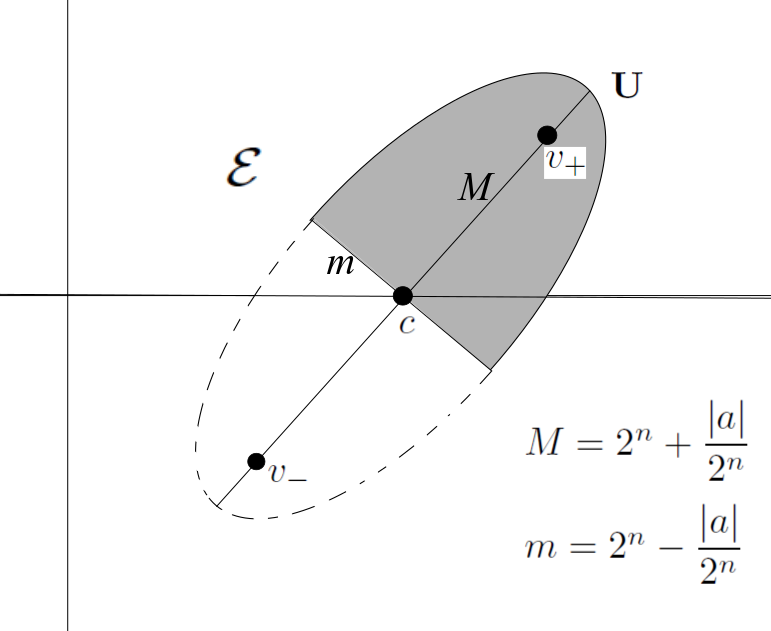} 
\caption{A sketch of $\setU$. It is a half ellipse cut by the minor axis centered at $c$ and rotated by $\frac{\psi}{2}$}
\label{Ellipse_Figure}
\end{figure}

It turns out that the foci of $\ellipse$ are values of importance, in fact, they are the critical values of the map $R_{n,c,a}$.

\begin{lemma}
\label{Ellipse_Foci_Are_CritValues_Lemma}
$v_{\pm}$ are the foci of $\ellipse$.
\end{lemma}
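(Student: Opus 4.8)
The plan is to read off the foci directly from the parametric description of $\ellipse$ already obtained in Lemma \ref{uhalfellipse_lemma}. Recall from Equation \eqref{ellipse} that $\ellipse$ is the ellipse centered at the origin with semi-major axis $b = 2^{n} + \frac{\lvert a \rvert}{2^{n}}$ along the real axis and semi-minor axis $d = 2^{n} - \frac{\lvert a \rvert}{2^{n}}$ along the imaginary axis, then rotated by $\psi/2$ and translated by $c$. So the first step is the purely computational one: for an ellipse in standard position with these semi-axes, the focal distance is $f = \sqrt{b^2 - d^2}$. Plugging in, $b^2 - d^2 = \left(2^{n} + \frac{\lvert a \rvert}{2^{n}}\right)^2 - \left(2^{n} - \frac{\lvert a \rvert}{2^{n}}\right)^2 = 4\lvert a \rvert$, so $f = 2\sqrt{\lvert a \rvert}$, and the foci of the un-rotated, un-translated ellipse are $\pm 2\sqrt{\lvert a \rvert}$ on the real axis.

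Next I would apply the rigid motion $w \mapsto e^{i\psi/2} w + c$ that carries the standard ellipse onto $\ellipse$; since this is an isometry it carries foci to foci, so the foci of $\ellipse$ are $c \pm e^{i\psi/2}\, 2\sqrt{\lvert a \rvert}$. The final step is to identify these two points with $v_{\pm}$. By definition $v_{\pm} = c \pm 2\sqrt{a}$, and writing $a = \lvert a \rvert e^{i\psi}$ with $\psi = \Arg(a)$ gives $\sqrt{a} = \sqrt{\lvert a \rvert}\, e^{i\psi/2}$ for the appropriate branch; hence $c \pm 2\sqrt{a} = c \pm e^{i\psi/2}\,2\sqrt{\lvert a \rvert}$, which matches exactly. (One should note that the two choices of square-root branch simply interchange $v_+$ and $v_-$, which is harmless since we only claim the \emph{pair} of foci equals the \emph{pair} $v_{\pm}$.)

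There is essentially no serious obstacle here; the only thing to be careful about is the branch-of-square-root bookkeeping, i.e. making sure the rotation angle $\psi/2$ built into $\ellipse$ in Lemma \ref{uhalfellipse_lemma} is the same half-argument that appears in $\sqrt{a}$, so that the rotated foci land precisely on $c \pm 2\sqrt{a}$ and not on some other pair differing by a sign or a full rotation. Once the parametrization from the previous lemma is in hand, the computation $b^2 - d^2 = 4\lvert a\rvert$ is immediate and the rest is just transporting foci along an isometry.
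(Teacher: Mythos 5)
Your proposal is correct and follows essentially the same route as the paper: compute the focal distance $\sqrt{b^2-d^2}=2\sqrt{\lvert a\rvert}$ from the semi-axes of Lemma \ref{uhalfellipse_lemma} and then place the foci using the center $c$. In fact you are slightly more careful than the paper's own proof, which writes $a$ in place of $\lvert a\rvert$ and leaves the rotation by $\psi/2$ and the square-root branch implicit, whereas you make that bookkeeping explicit.
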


\begin{proof}
For any ellipse, the foci lie on the major axis. The square of the distance of a focal point from the center is equal to difference of the squares of half the major and minor axis lengths.  So we get:
$$
\sqrt{\left(2^n + \dfrac{a}{2^n} \right)^2 - \left(2^n - \dfrac{a}{2^n} \right)^2}= \sqrt{4a} = 2\sqrt{a}.
$$
Since the center of $\ellipse$ is $c$, the foci of $\ellipse$ must be $c \pm 2\sqrt{a} = v_{\pm}$.
\end{proof}

Using Lemma \ref{Ellipse_Foci_Are_CritValues_Lemma} we can describe $\ellipse$ via another equation,
\begin{equation}
\label{Ellipse_Equation_Second_Definition}
\boxed{
\ellipse = \left\{  z \   \middle| ~ \lvert z-v_- \rvert + \lvert z-v_+ \rvert = 2^{n+1} + \frac{\lvert a \rvert}{2^{n-1}} \right\}
}~.
\end{equation}

Being able to describe $\ellipse$ as \eqref{Ellipse_Equation_Second_Definition} helps in the proof of our next lemma.  Now we begin to satisfy more criteria of Definition \ref{Polynomial-like_definition}.

\begin{lemma}
\label{u'inellipse}
Given $n \geq 3$, $\dfrac{c^2}{4}\leq ~\lvert a \rvert~\leq \left( 1-\dfrac{c}{2}\right)^2$, and $\lvert c \rvert \leq 1$ then $\setU' \subseteq \ellipse$.
\end{lemma}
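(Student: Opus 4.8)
The plan is to show that every point $z \in \setU'$ lies inside the ellipse $\ellipse$, using the focal description \eqref{Ellipse_Equation_Second_Definition}: it suffices to verify that for all $z \in \setU'$ we have
\[
\lvert z - v_+ \rvert + \lvert z - v_- \rvert < 2^{n+1} + \frac{\lvert a \rvert}{2^{n-1}}.
\]
Since $\setU' \subseteq \mathbb{D}(0,2)$ by construction (indeed $\setU'$ is a slice of the annulus $\mathbb{A}(\lvert a\rvert^{1/n}/2, 2)$), and since $v_\pm = c \pm 2\sqrt{a}$, the triangle inequality gives
\[
\lvert z - v_\pm \rvert \le \lvert z \rvert + \lvert c \rvert + 2\sqrt{\lvert a \rvert} < 2 + \lvert c \rvert + 2\sqrt{\lvert a \rvert},
\]
so the left-hand side is at most $4 + 2\lvert c\rvert + 4\sqrt{\lvert a \rvert}$. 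Thus I would reduce the lemma to the numerical inequality
\[
4 + 2\lvert c \rvert + 4\sqrt{\lvert a \rvert} \;\le\; 2^{n+1} + \frac{\lvert a \rvert}{2^{n-1}},
\]
for all admissible $n \ge 3$, $\lvert c\rvert \le 1$, $\lvert a\rvert \le (1 - c/2)^2 \le (3/2)^2$.

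The second step is to check this numerical inequality. The dangerous case is $n = 3$, where the right-hand side is $16 + \lvert a\rvert/4 \ge 16$, while the left-hand side, using $\lvert c\rvert \le 1$ and $\sqrt{\lvert a\rvert} \le 3/2$, is at most $4 + 2 + 6 = 12 < 16$. For larger $n$ the right-hand side only grows (the $2^{n+1}$ term dominates) while the bound on the left-hand side is independent of $n$, so the inequality persists; I would state this monotonicity explicitly rather than re-deriving it each time. One should be a little careful that $2\sqrt{a}$ here means $2\sqrt{\lvert a\rvert}$ in modulus regardless of $\Arg(a)$, and that the bound $\lvert a \rvert \le (1-c/2)^2$ combined with $c \in [-1,1]$ genuinely gives $\sqrt{\lvert a\rvert} \le 3/2$ (attained at $c = -1$); this is the only place the precise form of the parameter constraint enters.

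The main obstacle, such as it is, is making sure the estimate is not wasteful in a way that breaks at $n=3$ — but as the computation above shows there is comfortable room ($12$ versus $16$), so the crude triangle-inequality bound suffices and no finer analysis of the geometry of $\setU'$ (e.g. its angular aperture) is needed. A secondary point worth a sentence: the inclusion is strict, $\setU' \subseteq \ellipse$ with $\setU'$ in the open region bounded by $\ellipse$, which is what will be needed later for relative compactness in Definition \ref{Polynomial-like_definition}; the strict inequality above already delivers this. I would close by noting that combined with Lemma \ref{uhalfellipse_lemma}, this shows $\setU'$ sits inside the full ellipse while its image $\setU$ is only half of it, which is the geometric input for the $2$-to-$1$ (polynomial-like of degree two) structure established in Proposition \ref{R_Polynomial_like_on_first_U'_aplane_prop}.
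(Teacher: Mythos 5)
Your proposal is correct and follows essentially the same route as the paper: reduce to $\overline{\mathbb{D}(0,2)}\subseteq\ellipse$ via the focal characterization \eqref{Ellipse_Equation_Second_Definition}, bound $\lvert z-v_-\rvert+\lvert z-v_+\rvert\le 4+2\lvert c\rvert+4\sqrt{\lvert a\rvert}\le 12$ using $\lvert c\rvert\le 1$ and $\sqrt{\lvert a\rvert}\le 3/2$, and compare with $2^{n+1}\ge 16$ for $n\ge 3$. The paper's proof is the same triangle-inequality computation with the same constants, so there is nothing to add.
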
 

\begin{proof}
Since $\setU' \subseteq \overline{\mathbb{D}(0,2)}$ we will assume $\lvert z \rvert \leq 2$ and prove $\overline{\mathbb{D}(0,2)} \subseteq \ellipse$, and thus $\setU' \subseteq \ellipse$.

So
\begin{eqnarray*}
& ~ &  \lvert z-v_- \rvert + \lvert z-v_+ \rvert \\
& = & \lvert z-(c-2\sqrt{a}) \rvert + \lvert z-(c+2\sqrt{a}) \rvert \\
& \leq & 2\lvert z \rvert + 2\lvert c \rvert +4\sqrt{\lvert a \rvert} \\
& \leq & 2(2) + 2(1) + 4\sqrt{\left( 1-\frac{c}{2}\right)^2} \\
& \leq &  4 + 2 + 4(1.5) \\
& < & 16 \\
(since ~ n \geq 3)& \leq & 2^{n+1} ~ \leq ~ 2^{n+1} + \frac{\lvert a \rvert}{2^{n-1}}.
\end{eqnarray*}
By Lemma \ref{Ellipse_Foci_Are_CritValues_Lemma} and the description of $\ellipse$ in Equation \eqref{Ellipse_Equation_Second_Definition}, these inequalities yield that $\setU' \subseteq \ellipse$ under the restrictions on $\lvert a \rvert$ and $\lvert c \rvert$.
\end{proof}

Having $\setU'$ contained in $\ellipse$ is helpful, but we need to restrict further to one half of $\ellipse$.  The critical point in $\setU'$ maps to $v_+$ which is in the right half of $\ellipse$. We will restrict the argument of $a$ to $\lvert \psi \rvert \leq \frac{\pi}{n-1}$ which is bounded by $\frac{\pi}{2}$ for $n \geq 3$.  Since $\ellipse$ is a horizontal ellipse rotated by $\frac{\psi}{2}$, then under this restriction $\ellipse$ is rotated by $\frac{\pi}{4}$ at most.  In our next lemma we give criteria for which the minor axis of $\ellipse$ does not intersect $\setU'$ and at worst intersects $\partial \overline{\setU'}$ on the left side.  This will then give us that $\setU' \subset \setU$.

\begin{lemma}
\label{noaxisintercept}
For $n \geq 3$ and real $c < \dfrac{\lvert a \rvert ^{\frac{1}{n}}}{2\sqrt{2}}$, the minor axis of $\ellipse$ does not intersect $\setU'$.
\end{lemma}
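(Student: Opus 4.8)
The plan is to analyze the minor axis of $\ellipse$ through its foci. By Lemma~\ref{Ellipse_Foci_Are_CritValues_Lemma} and \eqref{Ellipse_Equation_Second_Definition}, the foci are $v_{\pm} = c \pm 2\sqrt{a}$, where $\sqrt{a} = \lvert a\rvert^{1/2} e^{i\psi/2}$, and the minor axis lies on their perpendicular bisector $\left\{ z : \lvert z - v_-\rvert = \lvert z - v_+\rvert \right\}$. Expanding,
$$
\lvert z - v_+\rvert^{2} - \lvert z - v_-\rvert^{2} = -8\lvert a\rvert^{1/2}\operatorname{Re}\left((z-c)e^{-i\psi/2}\right),
$$
and since $c$ is real this bisector is exactly the line $\left\{ z : \operatorname{Re}(ze^{-i\psi/2}) = c\cos(\psi/2) \right\}$, with $z$ lying strictly on the $v_+$-side precisely when $\operatorname{Re}(ze^{-i\psi/2}) > c\cos(\psi/2)$. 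So it suffices to prove $\operatorname{Re}(ze^{-i\psi/2}) > c\cos(\psi/2)$ for every $z \in \setU'$; this also places $\setU'$ in the $v_+$-half of $\ellipse$, which is what the discussion following the lemma requires.

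For $z = re^{i\theta}\in\setU'$ we have $\operatorname{Re}(ze^{-i\psi/2}) = r\cos(\theta - \psi/2)$. The angular condition in \eqref{UPrime_Equation_Definition} puts $\theta - \psi/2$ in an interval of radius $\pi/2n$ about $-(n-1)\psi/2n$, so $\lvert\theta - \psi/2\rvert < \frac{\pi + (n-1)\lvert\psi\rvert}{2n}$, which by the standing restriction $\lvert\psi\rvert\le\frac{\pi}{n-1}$ is at most $\pi/n \le \pi/3$. Therefore $\cos(\theta - \psi/2) > \cos\left(\frac{\pi + (n-1)\lvert\psi\rvert}{2n}\right) > 0$, and together with the radial bound $r > \frac{\lvert a\rvert^{1/n}}{2}$ from \eqref{UPrime_Equation_Definition} this gives
$$
\operatorname{Re}(ze^{-i\psi/2}) > \frac{\lvert a\rvert^{1/n}}{2}\cos\left(\frac{\pi + (n-1)\lvert\psi\rvert}{2n}\right).
$$

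It remains to compare the right-hand side with $c\cos(\psi/2)$. If $c\le 0$ there is nothing to prove, since the right-hand side is positive. If $c > 0$, the hypothesis $c < \frac{\lvert a\rvert^{1/n}}{2\sqrt 2}$ reduces the claim to the trigonometric inequality
$$
\cos\left(\frac{\pi + (n-1)u}{2n}\right) \ge \frac{\cos(u/2)}{\sqrt 2}, \qquad 0\le u\le\frac{\pi}{n-1},\quad n\ge 3,
$$
applied with $u = \lvert\psi\rvert$; I would establish it by evaluating both sides at $u = 0$ and $u = \frac{\pi}{n-1}$ and checking that the difference of the two sides is decreasing in between, a short derivative estimate using $\frac{\pi + (n-1)u}{2n} \le \pi/3$. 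This inequality is the one genuine obstacle: it is sharp, becoming an equality at $n=3$, $u=\pi/2$, which is exactly why the constant $\frac{1}{2\sqrt 2}$ appears in the hypothesis. The equality does no harm because the radial and angular inequalities above are strict, so the resulting bound $\operatorname{Re}(ze^{-i\psi/2}) > c\cos(\psi/2)$ is still strict. Hence no point of $\setU'$ lies on the minor axis of $\ellipse$.
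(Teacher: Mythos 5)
Your proof is correct, but it takes a genuinely different route from the paper's. The paper argues geometrically: it passes to the extreme rotation $\psi=\pm\frac{\pi}{2}$, uses a right-triangle computation to find the threshold value $c=\frac{\lvert a\rvert^{1/n}}{2}\left(\cos\frac{\pi}{4n}-\sin\frac{\pi}{4n}\right)\ge\frac{\lvert a\rvert^{1/n}}{2\sqrt{2}}$ at which the minor axis first reaches the inner corner of $\setU'$, and then compares rotation rates (the axis turns at rate $\frac{\gamma}{2}$, the rays of $\setU'$ only at rate $\frac{\gamma}{2n}$) to exclude intersections for intermediate arguments of $a$. You instead treat all $\psi$ with $\lvert\psi\rvert\le\frac{\pi}{n-1}$ uniformly: viewing the minor axis as the perpendicular bisector of the foci $v_{\pm}$ converts the lemma into the half-plane inequality $\operatorname{Re}(ze^{-i\psi/2})>c\cos(\psi/2)$ on $\setU'$, and your radial and angular bounds reduce this to the single sharp inequality $\cos\left(\frac{\pi+(n-1)u}{2n}\right)\ge\frac{1}{\sqrt{2}}\cos\frac{u}{2}$ for $0\le u\le\frac{\pi}{n-1}$, $n\ge 3$. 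What your approach buys: it replaces the worst-case-plus-rotation-rate geometry (which in the paper is somewhat informal) by a uniform estimate, and it directly yields the stronger fact actually needed for $\setU'\subset\setU$, namely that $\setU'$ lies in the $v_+$-half of $\ellipse$, not merely off the minor axis. What the paper's approach buys: the sharp constant $\frac{1}{2\sqrt{2}}$ is geometrically transparent as the axis grazing the corner of $\setU'$ when $n=3$. Your one unfinished step, the trigonometric inequality, is indeed true and your endpoint-plus-monotonicity plan works, but note that at $n=3$ the easy comparison $\sin\left(\frac{\pi+(n-1)u}{2n}\right)\ge\sin\frac{u}{2}$ does not make the difference decreasing (it leaves the factor $\frac{1}{2\sqrt{2}}-\frac{n-1}{2n}$, which is positive for $n=3$); a concavity bound such as $\sin x\ge\frac{3\sqrt{3}}{2\pi}x$ on $\left[0,\frac{\pi}{3}\right]$ together with $\sin\frac{u}{2}\le\frac{u}{2}$ closes that case, while the easy comparison suffices for $n\ge 4$, and the right endpoint value $\cos\frac{\pi}{n}-\frac{1}{\sqrt{2}}\cos\frac{\pi}{2(n-1)}$ is nonnegative (zero exactly at $n=3$), as you observed.
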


\begin{proof}
Keeping the restriction to the argument of $a$, $\lvert \psi \rvert \leq \frac{\pi}{2}$, we determine the value of $c$ for which the minor axis intersects $\setU'$ when $\psi = \pm \frac{\pi}{2}$.

Based on Figure \ref{Ellipse_intersecting_UPrime_Figure}, if we start with $\psi = \frac{\pi}{2}$ then the minor axis will be rotated by $\frac{\pi}{4}$ with respect to the imaginary-axis and then shifted by $c$.  We shall determine the value of $c$ for which the minor axis hits the lower left vertex of $\setU'$ at modulus $\dfrac{\lvert a \rvert^{\frac{1}{n}}}{2}$ and argument $\dfrac{\frac{\pi}{2}-\pi}{2n} ~ = ~ -\dfrac{\pi}{4n}$.
\begin{figure}[htbp]
\centering
\ \ \ \ \ 
\includegraphics[width=1\textwidth,keepaspectratio]{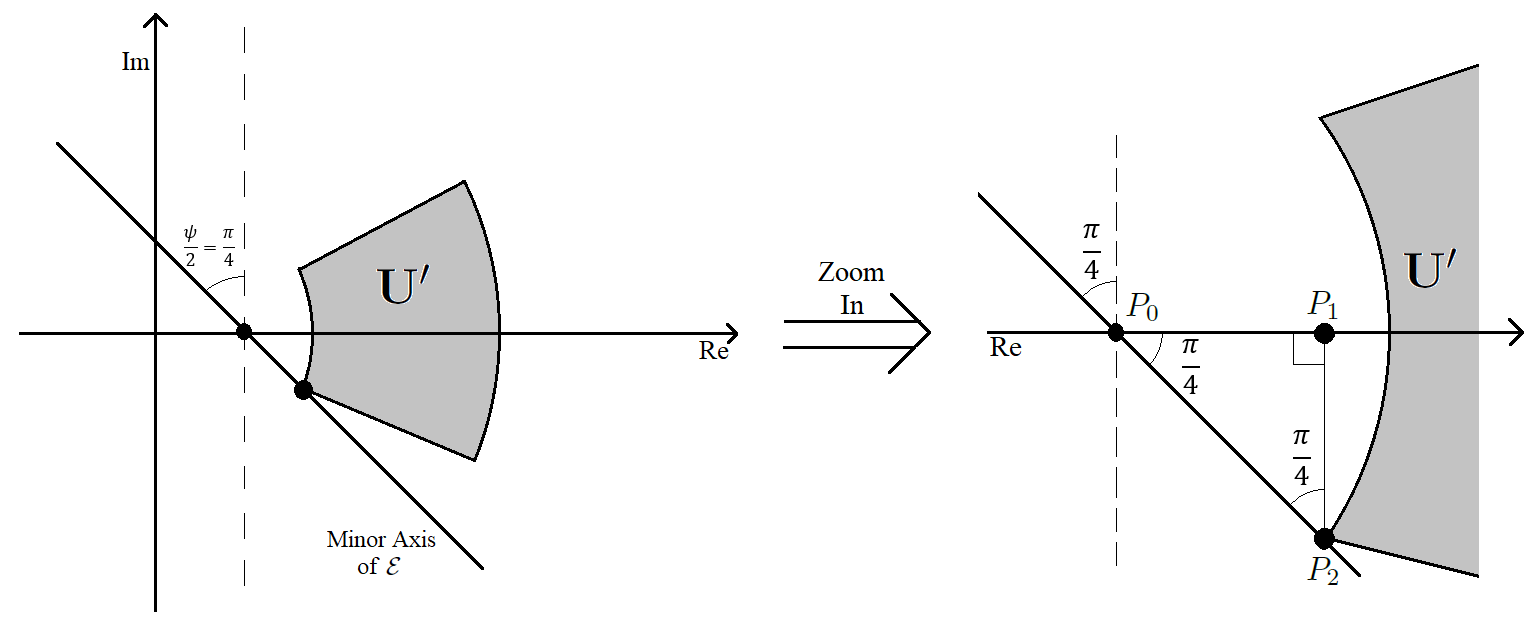} 
\centering
\caption{The minor axis of $\ellipse$ intersecting $\setU'$, with a zoom in shown on the right.}
\label{Ellipse_intersecting_UPrime_Figure}
\end{figure}
The coordinates of this intersection point are\\ 
{$P_2={\left( \dfrac{\lvert a \rvert^{\frac{1}{n}}}{2}\cos\left( \dfrac{\pi}{4n}\right) ~ , ~ -\dfrac{\lvert a \rvert^{\frac{1}{n}}}{2}\sin\left( \dfrac{\pi}{4n}\right) \right)}$}.  
With a closer view, we have a right triangle as shown on the right of Figure \ref{Ellipse_intersecting_UPrime_Figure} and because $\frac{\psi}{2} = \frac{\pi}{4}$, both legs of the triangle are equal length.  The coordinates of the other two points of the triangle are

\begin{itemize}
\item $P_0=\left(0 ,c \right)$,
\item $P_1=\left( \dfrac{\lvert a \rvert^{\frac{1}{n}}}{2}\cos\left( \dfrac{\pi}{4n}\right) ~ , ~0 \right)$.
\end{itemize}

The length of the vertical leg is the absolute value of the imaginary component of $P_2$. The length of the horizontal leg is the difference between the real components of $P_0$ and $P_1$.  We set these values equal to each other,
$$
\dfrac{\lvert a \rvert^{\frac{1}{n}}}{2}\sin\left( \dfrac{\pi}{4n}\right) ~ = ~ \dfrac{\lvert a \rvert^{\frac{1}{n}}}{2}\cos\left( \dfrac{\pi}{4n}\right) - c,\\
$$
then solve for $c$ in terms of $a$:
\begin{eqnarray*}
c & = & \dfrac{\lvert a \rvert^{\frac{1}{n}}}{2}\left( \cos\left( \dfrac{\pi}{4n}\right) - \sin\left( \dfrac{\pi}{4n}\right) \right)\\
& \geq & \dfrac{\lvert a \rvert^{\frac{1}{n}}}{2}\left( \cos\left( \dfrac{\pi}{12}\right) - \sin\left( \dfrac{\pi}{12}\right) \right) ~(\text{Since}~n~\geq~3)\\
& = & \dfrac{\lvert a \rvert ^{\frac{1}{n}}}{2\sqrt{2}}.
\end{eqnarray*}

Thus the smallest $c$ value for which the minor axis of $\ellipse$ intersects $\setU'$ is $c = \dfrac{\lvert a \rvert ^{\frac{1}{n}}}{2\sqrt{2}}$, so choosing $-1 \leq c < \dfrac{\lvert a \rvert ^{\frac{1}{n}}}{2\sqrt{2}}$ yields $\partial \setU' \cap \partial \setU = \emptyset$.

If we go to the other extreme and let $\psi = -\frac{\pi}{2}$, the work will be just the same. Here our intersection point is now  $\left( \dfrac{\lvert a \rvert^{\frac{1}{n}}}{2}\cos\left( \dfrac{\pi}{4n}\right) ~ , ~ \dfrac{\lvert a \rvert^{\frac{1}{n}}}{2}\sin\left( \dfrac{\pi}{4n}\right) \right)$, and the triangle is just a reflection of the previous case across the real axis.  Therefore our result is the same as above.

Finally, we argue that for $c=\dfrac{\lvert a \rvert ^{\frac{1}{n}}}{2\sqrt{2}}$, the minor axis will only touch the corners of $\setU'$ at $\psi = \pm \frac{\pi}{2}$ and there will be no intersection for any $\psi$ or $c$ value smaller than these bounds.

\begin{figure}
\centering
\includegraphics[width=\linewidth,keepaspectratio]{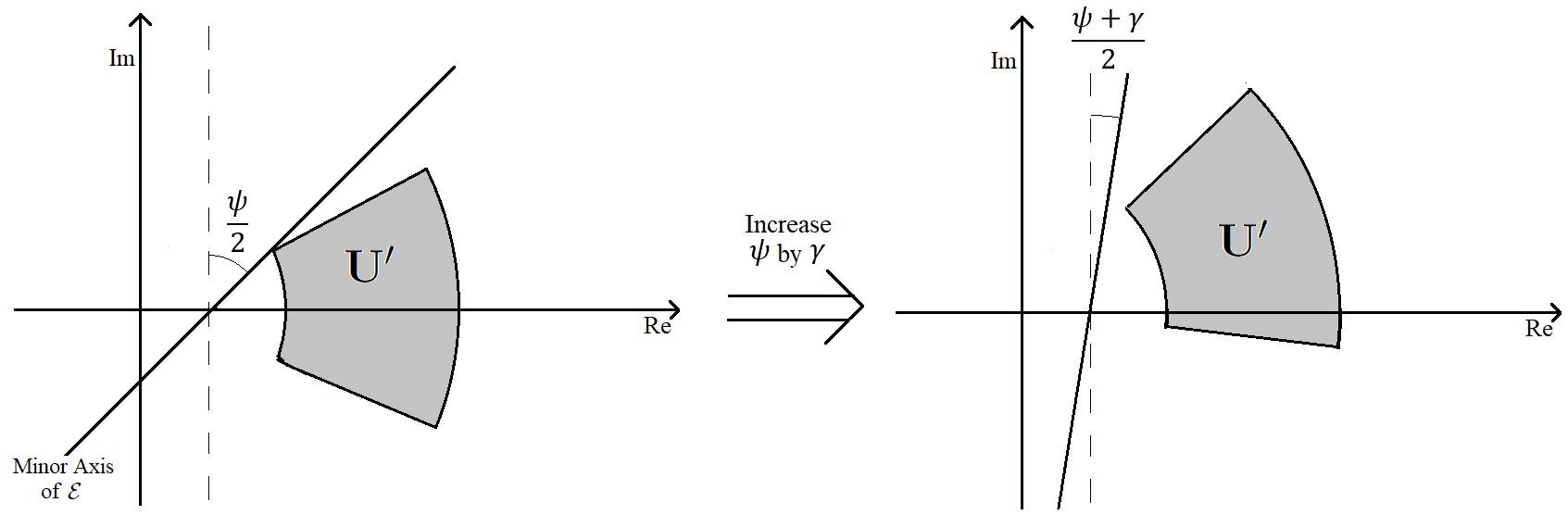} 
\caption{\label{fig:axisnotintersect} Increasing the angle results in no intersection between $\setU'$ and the minor axis of the ellipse $\ellipse$ .}
\end{figure}

When we change $\psi$, the overall change in angle of the minor axis will be larger than the overall change in angle of the rays of $\setU'$. Starting at $\psi = -\frac{\pi}{2}$ and increasing by some value $\gamma$, we find the change in angle of the minor axis to be

$$
\dfrac{\psi + \gamma}{2}-\dfrac{\psi}{2}=\dfrac{\gamma}{2}
$$

and the change in angle of the upper ray of $\setU'$ to be 
$$
\dfrac{(\psi + \gamma)+\pi}{2n}-\dfrac{\psi + \pi}{2n}=\dfrac{\gamma}{2n}.
$$

Since $n \geq 3$, then $\frac{\gamma}{2} > \frac{\gamma}{2n}$ and the minor axis won't touch $\setU'$ again until it goes too far and hits the ``lower left" corner of $\setU'$.  As shown above though, this won't occur until $\psi = \frac{\pi}{2}$.\\

Therefore for $c ~ < ~ \dfrac{\lvert a \rvert ^{\frac{1}{n}}}{2\sqrt{2}}$ we find the minor axis of $\ellipse$ will not intersect $\setU'$.
\end{proof}

Figure \ref{UPrime_Subset_U_Graphic} shows one example of $\setU' \subset \setU$ (there is nothing particularly special about these parameter values, they are merely round numbers which satisfy $\setU' \subset \setU$ ).  

\begin{figure}
\centering
\includegraphics[width=.5\textwidth,keepaspectratio]{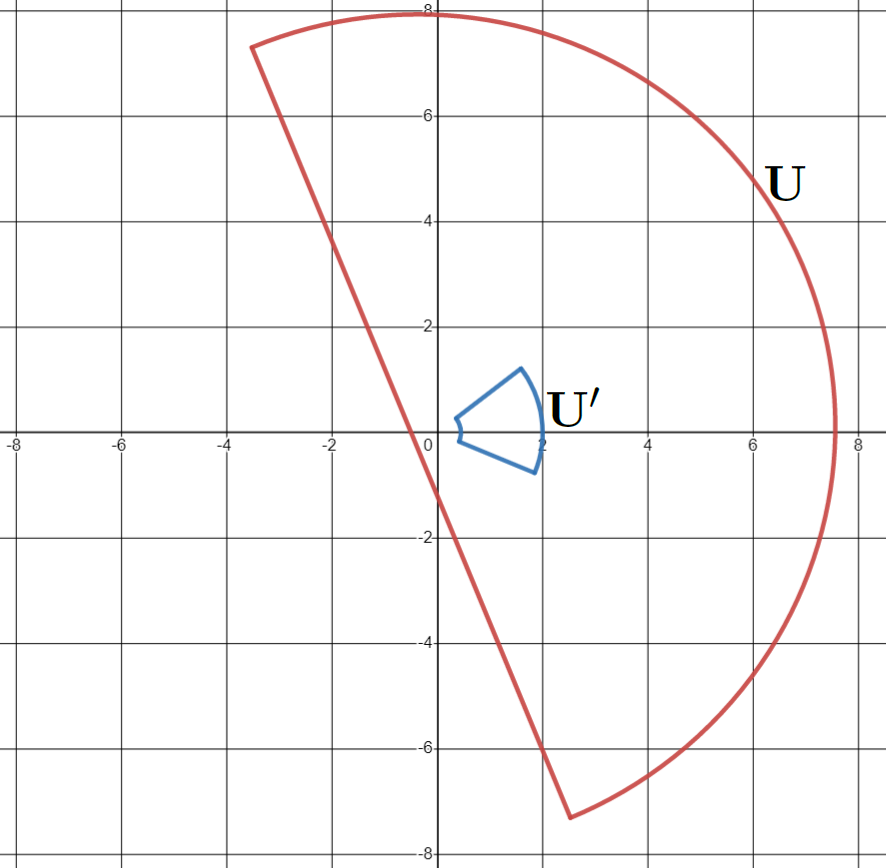} 
\caption{An example of $\setU'$ sitting inside $\setU$. Here $n=3$, $a=0.5 + 0.5i$, and $c=-0.5$}
\label{UPrime_Subset_U_Graphic}
\end{figure}

Now we can show that $R_{n,a,c}$ is polynomial-like on $\setU'$.

\begin{proposition}
\label{R_Polynomial_like_on_first_U'_aplane_prop}
$R_{n,a,c}: \setU' \rightarrow \setU$ is polynomial-like of degree two when $n \geq 3$,
$\dfrac{c^2}{4}\leq ~\lvert a \rvert~\leq \left( 1-\dfrac{c}{2}\right)^2$, and $-1 \leq c \leq 0$.
\end{proposition}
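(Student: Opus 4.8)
The plan is to verify, one by one, the three bulleted conditions in Definition~\ref{Polynomial-like_definition} together with the ``degree two'' clause, using the geometric picture built up in Lemmas~\ref{uhalfellipse_lemma}--\ref{noaxisintercept}. Note first that the hypotheses here are a special case of the standing assumptions: $-1 \le c \le 0$ forces $\lvert c\rvert\le 1$ and also $c < \tfrac{\lvert a\rvert^{1/n}}{2\sqrt 2}$ (the right side is positive, $c$ is nonpositive), so Lemma~\ref{noaxisintercept} applies; and one should check that $-1\le c\le 0$ is compatible with the argument restriction $\lvert\psi\rvert\le \tfrac{\pi}{n-1}$ used to control the rotation of $\ellipse$. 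I would open by recording these reductions.

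The first step is openness, simple connectivity, and boundedness of $\setU'$ and $\setU$. For $\setU'$ this is immediate from the defining inequalities in \eqref{UPrime_Equation_Definition}: it is an honest ``curved rectangle'' in polar coordinates (an angular sector of an annulus of angular width $\pi/n < \pi$), hence bounded, open, and simply connected. For $\setU=R_{n,c,a}(\setU')$, Lemma~\ref{uhalfellipse_lemma} identifies it as half of the solid ellipse $\ellipse$ cut along its minor axis; a half-ellipse is bounded, open, and simply connected. The second step is relative compactness of $\setU'$ in $\setU$, i.e. $\overline{\setU'}\subset\setU$. This is where the earlier lemmas do the real work: Lemma~\ref{u'inellipse} gives $\setU'\subseteq\ellipse$ (indeed $\overline{\mathbb D(0,2)}\subseteq\ellipse$, so even $\overline{\setU'}\subseteq\ellipse$), and Lemma~\ref{noaxisintercept} guarantees that the minor axis of $\ellipse$ — the flat edge of the half-ellipse $\setU$ — misses $\setU'$ entirely under $c<\tfrac{\lvert a\rvert^{1/n}}{2\sqrt2}$. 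Since the critical point of $\setU'$ maps to $v_+$, which lies in the right (``far from the cut'') half of $\ellipse$, I need to confirm that $\setU'$ lies on the correct side of the minor axis so that $\overline{\setU'}$ sits in the open half-ellipse $\setU$ and not merely in $\ellipse$; this is the one place I would be most careful, and I expect it to be the main obstacle, since it requires pinning down which connected component of $\ellipse\setminus(\text{minor axis})$ equals $\setU$ and checking $\setU'$ falls in it, using the bound $\lvert\psi\rvert\le\pi/(n-1)$ on the rotation.

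The third step is that $R_{n,a,c}\colon\setU'\to\setU$ is analytic and proper. Analyticity is clear since $R_{n,a,c}$ is rational with poles only at $0$ and $\infty$, neither of which meets the annulus $\mathbb A(\lvert a\rvert^{1/n}/2,\,2)\supseteq\setU'$. Properness I would get from the explicit boundary analysis already carried out inside the proof of Lemma~\ref{uhalfellipse_lemma}: $R_{n,c,a}$ sends the outer arc $r=2$ and inner arc $r=\lvert a\rvert^{1/n}/2$ of $\partial\setU'$ onto the elliptical arc bounding $\setU$, and sends the two radial edges $\theta=(\psi\pm\pi)/(2n)$ onto the minor-axis segment; hence $R_{n,a,c}(\partial\setU')=\partial\setU$, and since $\setU=R_{n,a,c}(\setU')$ is open with this boundary behavior, preimages of compact subsets of $\setU$ stay away from $\partial\setU'$, giving properness. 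The final step is the degree-two/critical-point count: $\setU'$ contains exactly one critical point of $R_{n,a,c}$ (already argued in the text right after \eqref{U_Equation_Definition}, since the $2n$ critical points $\lvert a\rvert^{1/2n}e^{i(\psi+k\pi)/n}$ are spaced $\pi/n$ apart and only $k=0$ lands in the angular window of $\setU'$), and a proper analytic map whose only critical point is simple is $2$-to-$1$ off that point — concretely, on each arc $\lvert z\rvert=r$ the map $\theta\mapsto R_{n,c,a}(re^{i\theta})$ as computed in \eqref{ellipse}--\eqref{paraellipse} traces an elliptical arc and is $2$-to-$1$ onto its image over an angular interval of length $\pi$ in $n\theta$, corresponding exactly to the half-ellipse. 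Assembling these four steps verifies every clause of Definition~\ref{Polynomial-like_definition}, completing the proof.
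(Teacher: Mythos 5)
Your proposal follows the same skeleton as the paper's proof: topological properties of $\setU'$ and $\setU$, relative compactness via Lemma~\ref{u'inellipse} plus Lemma~\ref{noaxisintercept} (with $-1\leq c\leq 0<\lvert a\rvert^{1/n}/(2\sqrt 2)$), analyticity from $0\notin\setU'$, and the unique critical point. The one real divergence is the degree-two step: the paper gets two-to-one directly from the involution symmetry $h_a(z)=a^{1/n}/z$ of Lemma~\ref{involution_prop} (which preserves $\setU'$ and pairs the radii $r$ and $\lvert a\rvert^{1/n}/r$), whereas you argue properness from the boundary correspondence and then invoke a degree count (proper analytic map with a single simple critical point has degree two). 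That route is legitimate and in fact fills in the properness clause that the paper leaves implicit; but your ``concrete'' supporting claim is wrong as stated: for fixed $r\neq\lvert a\rvert^{1/2n}$ the map $\theta\mapsto R_{n,a,c}(re^{i\theta})$ is \emph{injective} on the angular window of $\setU'$ (it traces half of the radius-$r$ image ellipse once, since $\bigl(A\cos(n\theta),B\sin(n\theta)\bigr)$ with $A,B>0$ is injective on an interval of length $\pi$ in $n\theta$); the two-fold covering comes from the two circles $\lvert z\rvert=r$ and $\lvert z\rvert=\lvert a\rvert^{1/n}/r$ sharing the same image, i.e.\ exactly the involution pairing the paper uses. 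So delete that remark and rely either on the Riemann--Hurwitz-style count or on Lemma~\ref{involution_prop}. Finally, the issue you flag as the main obstacle --- that $\setU'$ lies in the $v_+$ half of $\ellipse$ rather than merely in $\ellipse\setminus(\text{minor axis})$ --- is handled in the paper only by the informal discussion preceding Lemma~\ref{noaxisintercept} (the axis can at worst graze the left corners of $\partial\setU'$), so your instinct to pin down which component of $\ellipse$ minus its minor axis equals $\setU$, e.g.\ by checking that the critical point of $\setU'$ and $v_+$ lie on the same side, is a point where your write-up would actually be more careful than the original.
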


\begin{proof}
Both $\setU'$ and $\setU$ are bounded, open, and simply connected. Combining the results from Lemma \ref{u'inellipse} and Lemma \ref{noaxisintercept} with the restriction that $-1 \leq c \leq 0 < \dfrac{\lvert a \rvert ^{\frac{1}{n}}}{2\sqrt{2}}$ yields that $\setU'$ is relatively compact in $\setU$.  Also, Lemma \ref{involution_prop} plus the fact that $\setU'$ contains a unique critical point yields that $R_{n,a,c}$ is a two-to-one map on $\setU'$.
Last, $R_{n,a,c}$ is analytic on $\setU'$ because $a \neq 0$ and $\setU'$ does not contain the origin. Therefore $R_{n,a,c}$ satisfies Definition \ref{Polynomial-like_definition} and is polynomial-like of degree two.  
\end{proof}

Now because of the polynomial-like behavior of $R_{n,a,c}$, we see the reason for baby quadratic Julia sets appearing in the dynamical plane of $R_{n,a,c}$.

\begin{corollary}
\label{Baby_Julia_Corollary_First}
With the same assumptions on $n$, $a$, and $c$ as in Proposition \ref{R_Polynomial_like_on_first_U'_aplane_prop}, the collection of points in $\setU'$ whose orbits do not escape $\setU'$ is homeomorphic to the filled Julia set of a quadratic polynomial.
\end{corollary}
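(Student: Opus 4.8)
The plan is to read this off directly from the Douady--Hubbard straightening theorem, since essentially all of the work has already been done. First I would invoke Proposition~\ref{R_Polynomial_like_on_first_U'_aplane_prop}: under the stated hypotheses on $n$, $a$, and $c$, the restriction $R_{n,a,c}\colon \setU' \to \setU$ is polynomial-like of degree two. Next I would observe that the set named in the statement --- the points of $\setU'$ whose forward orbits never leave $\setU'$ --- is, by the last clause of Definition~\ref{Polynomial-like_definition}, exactly the filled Julia set of the polynomial-like map $R_{n,a,c}|_{\setU'}$.

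With these two identifications in hand, the conclusion is immediate from Theorem~\ref{DH_Ploynomial-Like_theorem}: a polynomial-like map of degree two is topologically conjugate, on its filled Julia set, to a quadratic polynomial on that polynomial's filled Julia set. A topological conjugacy between two maps is in particular a homeomorphism between the two sets on which they act, so the filled Julia set of $R_{n,a,c}|_{\setU'}$ is homeomorphic to the filled Julia set of a quadratic polynomial, which is precisely the assertion. One can add that, after an affine change of coordinates, the quadratic may be taken in the form $P_{c'}$ for a suitable parameter $c'$; pinning down $c'$ is not needed here, though it is what will matter in the parametrized statement that comes next.

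There is essentially no obstacle, so the actual proof will be two or three sentences. The only points that deserve a moment's care are bookkeeping ones: confirming that Theorem~\ref{DH_Ploynomial-Like_theorem} requires nothing beyond ``polynomial-like of degree two'' (it does not), and checking that the hypotheses invoked --- in particular $-1 \le c \le 0$, which guarantees $c < \lvert a \rvert^{1/n}/(2\sqrt{2})$ and hence, via Lemmas~\ref{u'inellipse} and~\ref{noaxisintercept}, that $\setU'$ is relatively compact in $\setU$ --- are exactly those assumed in the corollary, so that Proposition~\ref{R_Polynomial_like_on_first_U'_aplane_prop} applies verbatim.
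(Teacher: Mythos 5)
Your proposal is correct and matches the paper's own argument: the paper likewise cites Proposition~\ref{R_Polynomial_like_on_first_U'_aplane_prop} and then applies Theorem~\ref{DH_Ploynomial-Like_theorem} to conclude that the non-escaping set, being the filled Julia set of the degree-two polynomial-like restriction, is homeomorphic to a quadratic polynomial's filled Julia set. Your extra remarks identifying the non-escaping set with the filled Julia set of the polynomial-like map and checking the hypotheses are just careful bookkeeping, not a different route.
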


\begin{proof}
Having established that $R_{n,a,c}$ is polynomial-like of degree two in Proposition \ref{R_Polynomial_like_on_first_U'_aplane_prop}, by Theorem \ref{DH_Ploynomial-Like_theorem}{} $R_{n,a,c}$ is topologically conjugate to some quadratic polynomial on that polynomial's filled Julia set.  This polynomial's Julia set is the baby Julia set we find in the dynamical plane of $R_{n,a,c}$.
\end{proof}

This confirms that the five obvious black shapes in Figure \ref{Julia_set_in_Annulus_Figure} are baby Julia sets occurring in the dynamical plane of $R_{n,a,c}$.

\subsection{Parameter Plane Results: $a$-plane}

Now we turn to locating a homeomorphic copy of $\mandel$ in the boundedness locus in the $a$-parameter plane of $R_{n,a,c}$. We need to show we satisfy the criteria of Theorem \ref{DH_Mandelbrot_existence_Criterion_Theorem}, so we first define the set $\mathbf{W}$ mentioned in its hypothesis:


\begin{equation}
\boxed{
\setWa = \left\{  a \   \middle|  \ \ \frac{c^2}{4}\leq ~\lvert a \rvert~\leq \left( 1-\frac{c}{2}\right)^2~~and~~\lvert \psi \rvert \leq \frac{\pi}{n-1} \right\}}~.
\label{eqn:defnW}
\end{equation}

Remember that $\psi=\Arg(a)$. Now we show how the other parts of the hypotheses from Theorem~\ref{DH_Mandelbrot_existence_Criterion_Theorem} are satisfied.

\begin{proposition}
\label{allnvplusloopW}
For $n \geq 3$ and $-1 \leq c \leq 0$, as $a$ travels around $\partial \setWa$, $v_+$ loops around $\setU - \setU'$.
\end{proposition}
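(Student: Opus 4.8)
The plan is to track $v_+ = c + 2\sqrt{a}$ as $a$ traverses the boundary $\partial\setWa$, which consists of two circular arcs (at radii $|a| = c^2/4$ and $|a| = (1-c/2)^2$) joined by two radial segments (at arguments $\psi = \pm\pi/(n-1)$). First I would recall from Lemma~\ref{Ellipse_Foci_Are_CritValues_Lemma} that $v_+$ is the focus of $\ellipse$ on the rotated major axis, lying at distance $2\sqrt{|a|}$ from the center $c$ in the direction $e^{i\psi/2}$; in particular $v_+$ depends continuously on $a$ and, since $a\mapsto\sqrt a$ is a homeomorphism on the relevant region avoiding $0$, the image of $\partial\setWa$ under $a\mapsto v_+$ is a closed loop. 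The goal is to show this loop stays in $\setU\setminus\setU'$ and winds once around $\setU'$.

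The key steps, in order: (1) Show $v_+\in\setU$ for all $a\in\partial\setWa$. Since $\setU$ is the half-ellipse bounded by $\ellipse$ and its minor axis (Lemma~\ref{uhalfellipse_lemma}), and $v_+$ is a focus of $\ellipse$ lying on the major-axis side toward which $\setU'$ opens, $v_+$ is well inside $\ellipse$; and because $c\le 0$, $v_+ = c + 2\sqrt{|a|}e^{i\psi/2}$ has nonnegative real part in the rotated frame, so it lies on the correct ($\setU$) side of the minor axis — here I would reuse the estimates and geometry from Lemma~\ref{noaxisintercept}. (2) Show $v_+\notin\setU'$ for $a\in\partial\setWa$. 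On the outer arc $|a| = (1-c/2)^2$, the modulus $|v_+|$ is too large: $|v_+|\ge 2\sqrt{|a|} - |c| = 2(1-c/2) - |c|$, which for $-1\le c\le 0$ equals $2 - c - |c| = 2 - 2|c|\cdot[\text{sign}]$, and a short computation shows this is either $\ge 2$ (pushing $v_+$ past the outer radius of $\setU'$) or the argument of $v_+$ falls outside the angular wedge of $\setU'$; on the inner arc $|a| = c^2/4$ we get $v_+ = c + |c|e^{i\psi/2}$, which is small in modulus (indeed $|v_+| \le 2|c|/\ldots$ falls below $|a|^{1/n}/2$, the inner radius of $\setU'$, for the $n$ and $c$ in range) — I'd verify this bound explicitly. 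On the two radial segments, $\psi$ is fixed at $\pm\pi/(n-1)$, so $v_+$ has argument (in the rotated sense) forced to $\pm\pi/(2(n-1))$ off the spine direction; comparing with the half-angle $\pi/(2n)$ of $\setU'$'s wedge, I would check that $v_+$ sits just outside the angular range of $\setU'$. (3) Combine: the loop $a\mapsto v_+$ lies entirely in $\setU\setminus\setU'$, and since $v_+$ traces out (up to the square-root homeomorphism and the affine map $w\mapsto c + 2w$) the boundary of the square-root image of $\setWa$, which is an annular region encircling the origin, and the critical point / $\setU'$ sits over that origin, the winding number of $v_+$ around $\setU'$ is exactly $1$.

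The main obstacle I expect is step (2) on the radial segments together with step (3): making rigorous that $v_+$ stays outside $\setU'$ precisely when $\psi$ is extremal requires careful bookkeeping of two competing angular scales ($\psi/2$ for the ellipse/focus rotation versus $\psi/(2n)$ for the wedge $\setU'$), and one must confirm that as $a$ runs along the two arcs the point $v_+$ actually passes around $\setU'$ rather than, say, retreating and doubling back — i.e., that the net winding is $1$ and not $0$. The clean way to see the winding is to note that $\setWa$ is an annular sector containing a full neighborhood of the origin's "direction set" once $|\psi|\le\pi/(n-1)$ is wide enough, so its image under $\sqrt{\cdot}$ (a sector of half the angular width, still a topological annulus around $0$ after accounting for the branch) has $0$ in its bounded complementary component; then $v_+ = c + 2\sqrt a$ is an affine image, and $\setU'$ — which contains the critical point lying over this region — is captured in the bounded complementary component of the loop. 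I would make this precise by an explicit homotopy of $\partial\setWa$ to a round circle about $0$ within $\setWa\setminus\{0\}$ and invoke continuity of $v_+$ and of the containment $v_+\notin\setU'$ along the homotopy.
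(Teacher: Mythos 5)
Your skeleton — walk $a$ along the four pieces of $\partial\setWa$, show $v_+$ avoids $\setU'$ on each, then conclude a winding number of one — matches the paper's proof, and your outer-arc estimate via the reverse triangle inequality, $\lvert v_+\rvert \ge 2\sqrt{\lvert a\rvert}-\lvert c\rvert = 2(1-\tfrac{c}{2})+c = 2$, is in fact cleaner than the paper's calculus minimization. But two of your steps fail as written. On the inner arc your exclusion is purely metric and it is false: with $\lvert a\rvert = c^2/4$ one has $v_+ = c+\lvert c\rvert e^{i\psi/2}$, so $\lvert v_+\rvert = 2\lvert c\rvert\,\lvert\sin(\psi/4)\rvert$, and for $n=3$, $c=-1$, $\psi=\pi/2$ this is about $0.77$, well above the inner radius $\lvert a\rvert^{1/n}/2=(1/4)^{1/3}/2\approx 0.31$ of $\setU'$. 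The bound you propose to "verify explicitly" therefore cannot be verified on the allowed range. The correct exclusion is directional, not metric: $\mathrm{Re}(v_+) = c+\lvert c\rvert\cos(\psi/2)\le 0$ since $c\le 0$, while every $z\in\setU'$ satisfies $\lvert\Arg(z)\rvert\le\pi/(2(n-1))<\pi/2$, so $\setU'$ lies in the open right half-plane and $v_+$ stays strictly to its left — this is exactly how the paper handles the inner arc.

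The second problem is your winding argument. $\setWa$ is a sector-annulus of angular width $2\pi/(n-1)\le\pi$, so neither $\setWa$ nor its square-root image surrounds the origin; $\partial\setWa$ is null-homotopic in $\mathbb{C}\setminus\{0\}$, and the homotopy you propose, deforming $\partial\setWa$ to a round circle about $0$ inside $\setWa\setminus\{0\}$, does not exist. The enclosure of $\setU'$ does not come from $a$ (or $\sqrt a$) encircling $0$. It comes from the four boundary pieces placing $v_+$ successively to the left of $\setU'$ (inner arc), on or above its upper boundary ray (upper radial segment, where $\Arg(v_+)\ge\Arg(2\sqrt a)=\psi/2=\pi/(2(n-1))$, which equals the maximal argument $(\psi+\pi)/(2n)$ over $\overline{\setU'}$ there, using $c\le 0$), at modulus at least $2$ (outer arc), and below its lower ray (lower segment); that is the paper's route. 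Alternatively, since $a\mapsto v_+$ is injective on $\overline{\setWa}$, the loop $v_+(\partial\setWa)$ bounds the topological disk $v_+(\setWa)$, and it suffices to check one point of $\setU'$ (e.g. $z=1$) lies in its interior and that the boundary misses $\setU'$. Either repair gives the winding; your homotopy-to-a-circle-about-the-origin plan cannot.
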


\begin{proof}
We start with $\lvert a \rvert = \dfrac{c^2}{4}$ on the inside arc of $\setWa$. Here
$$
Re(v_+)=Re\left( c+2\sqrt{a} \right) = c+2\sqrt{\lvert a \rvert}\cos\left( \dfrac{\psi}{2} \right) = c+\lvert c \rvert \cos \left( \dfrac{\psi}{2} \right) \leq c+ \lvert c \rvert = 0
$$ 
since $c \leq 0$.  This means $v_+$ never lies in the right half plane. The inside arc of $\setU'$ is always in the right half plane because 
$$
\lvert \Arg(z \in \setU') \rvert \leq \left| \dfrac{\pm \frac{\pi}{n-1}\pm \pi}{2n} \right| \leq \dfrac{\pi}{2(n-1)} < \dfrac{\pi}{2}
$$ 
for $\lvert \Arg(a) \rvert \leq \frac{\pi}{n-1}$ and $n \geq 3$. This means that $v_+$ lies to the left of $\setU'$ when $a$ is on the inside arc of $\setWa$.

Now for $a$ on the upper ray of $\setWa$,~ $\Arg(a)=\frac{\pi}{n-1}$. Thus for any $z \in \setU'$, 

$$
\Arg(z) \leq \frac{\pi}{2(n-1)}.
$$  

Because $c$ is real and non-positive, we find
\begin{eqnarray*}
 \Arg(v_+) & = & \Arg(c+2\sqrt{a})\\
 & \geq & \Arg(2\sqrt{a}) \\
 & = & \dfrac{\psi}{2} \\
 & = & \dfrac{1}{2}\dfrac{\pi}{n-1} \\
 & \geq & \Arg(z \in \setU').
\end{eqnarray*}
So at worst $v_+$ touches the upper ray of $\setU'$ when $a$ is on the upper ray of $\setWa$.  This is permissible as $\setU'$ is open.

When we reach the outer arc of $\setWa$ where $\lvert a \rvert = \left( 1-\frac{c}{2}\right)^2$, we find that 
$\lvert v_+ \rvert = \lvert c+2\sqrt{a} \rvert $
$$
= \sqrt{c^2 + 4\lvert a \rvert + 4c\sqrt{\lvert a \rvert}\cos \left(\dfrac{\psi}{2} \right)} = \sqrt{4-4c+2c^2-2c(c-2)\cos \left(\dfrac{\psi}{2} \right)}.
$$

To find the minimum of this, we take a derivative with respect to $\psi$ and get 
$$
\dfrac{c(c-2)\sin\left( \frac{\psi}{2} \right)}{2\sqrt{4-4c+2c^2-2c(c-2)\cos \left( \frac{\psi}{2} \right)}},
$$ 
and the modulus has a critical point at $\psi=0$. Using the second derivative test we find a minimum occurs at $\psi=0$, and evaluating the modulus at $\psi = 0$ gives 
$$
\lvert v_+ \rvert = \sqrt{4-4c+2c^2-2c(c-2)(1)}=2.
$$

Therefore, at worst $v_+$ just touches the outer boundary arc of $\setU'$ as $a$ travels along the outer arc of $\setWa$ (still permissible with $\setU'$ open).

Now for $a$ on the lower ray, $\Arg(a)=-\frac{\pi}{n-1}$. Using this and the fact that $\Arg(c)=-\pi$, we find 
$$
\Arg(z \in \setU') > -\dfrac{1}{2} \dfrac{\pi}{n-1} = \Arg(2\sqrt{a}) \geq \Arg(c+2\sqrt{a}).
$$ 

So $\Arg(z \in \setU') > \Arg(v_+)$ for all $a$ on the lower ray of $\setWa$ and $v_+$ lies below $\setU'$. 

Therefore as $a$ comes back to its starting position in $\partial \setWa$, $v_+$ has finished a closed loop around the outside of $\setU'$.
\end{proof}

With Propositions \ref{R_Polynomial_like_on_first_U'_aplane_prop} and \ref{allnvplusloopW} we satisfy the necessary conditions of Theorem \ref{DH_Mandelbrot_existence_Criterion_Theorem} for the existence of a baby $\mandel$ lying in $\setWa$, in a subset of the boundedness locus.

\begin{theorem}
\label{V+Mandel_APlane_Cnegative_theorem}
For $n \geq 3$ and $-1 \leq c \leq 0$, the set of $a$-values within $\setWa$ for which the orbit of $v_+$ does not escape $\setU'$ is homeomorphic to $\mandel$.
\end{theorem}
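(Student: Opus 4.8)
The plan is to verify the hypotheses of the Douady--Hubbard criterion (Theorem \ref{DH_Mandelbrot_existence_Criterion_Theorem}) with parameter $\lambda = a$, set $W = \setWa$, family $F_a = R_{n,a,c}$, domains $\setU'_a = \setU'$ and $\setU_a = \setU$ from \eqref{UPrime_Equation_Definition}--\eqref{U_Equation_Definition}, and critical point $c_a = \lvert a\rvert^{1/2n} e^{i\psi/2n}$ (where $\psi = \Arg a$), and then to read off the conclusion. Two of the hypotheses are exactly the preceding propositions: Proposition \ref{R_Polynomial_like_on_first_U'_aplane_prop} gives that each $F_a\colon \setU' \to \setU$ is polynomial-like of degree two with unique critical point $c_a\in\setU'$, and a one-line computation ($c_a^n = \sqrt a$, hence $a/c_a^n = \sqrt a$) gives $F_a(c_a) = c + 2\sqrt a = v_+$; Proposition \ref{allnvplusloopW} gives that as $a$ traverses $\partial\setWa$ the value $v_+$ winds once around $\setU'$ while remaining in $\setU\setminus\setU'$ (it may touch $\partial\setU'$, which is allowed since $\setU'$ is open, and it lies in $\setU$ because, being a focus of $\ellipse$ by Lemma \ref{Ellipse_Foci_Are_CritValues_Lemma} and $\lvert\psi\rvert$ being small, it sits strictly inside the half-ellipse $\setU$). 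Thus $F_a(c_a)\in\setU-\setU'$ throughout $\partial\setWa$ and loops around the outside of $\setU'$, as required.

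What remains is the routine structural part. The set $\setWa$ is the curvilinear quadrilateral bounded by the arcs $\lvert a\rvert = c^2/4$ and $\lvert a\rvert = (1-c/2)^2$ and the radial segments $\psi = \pm\pi/(n-1)$; its boundary is a Jordan curve and $\setWa$ is homeomorphic to a closed disk, which is all Theorem \ref{DH_Mandelbrot_existence_Criterion_Theorem} asks of $W$. For $-1\le c<0$ the inner arc keeps $\setWa$ compact and bounded away from the origin, and since all defining inequalities are non-strict, $\setWa$ sits inside a slightly larger open annular sector on which Propositions \ref{R_Polynomial_like_on_first_U'_aplane_prop} and \ref{allnvplusloopW} still hold; the endpoint $c=0$ is the McMullen case, covered by Devaney's theorem in \cite{dhalo} (and recoverable here by a harmless enlargement of the inner radius so that $\setWa$ stays away from $a=0$, where the family degenerates). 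The map $(a,z)\mapsto R_{n,a,c}(z) = z^n + a z^{-n} + c$ is analytic in $(a,z)$ off $z=0$, and $\setU'$ never meets the origin. Finally, $\partial\setU'_a$ is a union of two circular arcs and two radial segments whose radii and angles are the real-analytic functions $\lvert a\rvert^{1/n}/2$, $2$, $(\psi\pm\pi)/2n$ of $a$ (away from $0$), and $\partial\setU_a$ consists of the elliptical arc and segment described explicitly in \eqref{ellipse}, whose coefficients are real-analytic in $a$; hence the boundaries vary analytically with $a$ in the sense the criterion needs.

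With all hypotheses of Theorem \ref{DH_Mandelbrot_existence_Criterion_Theorem} in place, its conclusion gives that $\{a\in\setWa : \text{the orbit of } c_a \text{ never leaves } \setU'\}$ is homeomorphic to $\mandel$. Since $c_a\in\setU'$ for every $a\in\setWa$, the orbit $c_a,\, F_a(c_a)=v_+,\, F_a^2(c_a),\dots$ stays in $\setU'$ if and only if the orbit of $v_+$ stays in $\setU'$; so this set coincides with the one in the statement, which completes the proof.

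The genuine obstacle here is not a computation — those are outsourced to Propositions \ref{R_Polynomial_like_on_first_U'_aplane_prop} and \ref{allnvplusloopW} — but rather the bookkeeping around the criterion: checking that $\setWa$ legitimately plays the role of the closed disk $W$ (in particular that polynomial-likeness and the winding of $v_+$ persist on a slightly enlarged open parameter set, with the endpoint $c=0$ treated separately), and that the $a$-dependence of $\partial\setU'_a$ and $\partial\setU_a$ — real-analytic but not holomorphic — is of the kind the Douady--Hubbard machinery tolerates, exactly as in the $c=0$ treatment.
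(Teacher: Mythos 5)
Your proposal is correct and follows essentially the same route as the paper: the theorem is obtained there exactly by combining Proposition \ref{R_Polynomial_like_on_first_U'_aplane_prop} (polynomial-likeness of degree two on $\setU'$) with Proposition \ref{allnvplusloopW} (the loop of $v_+$ in $\setU-\setU'$ as $a$ traverses $\partial \setWa$) and invoking the Douady--Hubbard criterion of Theorem \ref{DH_Mandelbrot_existence_Criterion_Theorem}. The extra bookkeeping you supply (that $\setWa$ is a closed topological disk, analytic dependence of the map and boundaries on $a$, the identification of the orbit of the critical point with the orbit of $v_+$, and the degenerate $a=0$ issue when $c=0$) is left implicit in the paper.
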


See Figure \ref{baby_mandel_in_W_Figure} for an example of such a baby $\mandel$. 

Theorem \ref{V+Mandel_APlane_Cnegative_theorem} is part of Main Theorem \ref{Main_Theorem_APlane}. We will finish Main Theorem \ref{Main_Theorem_APlane} by taking advantage of some symmetries in the family $R_{n,a,c}$.

\begin{figure}
\centering
\includegraphics[width=0.75\textwidth,keepaspectratio]{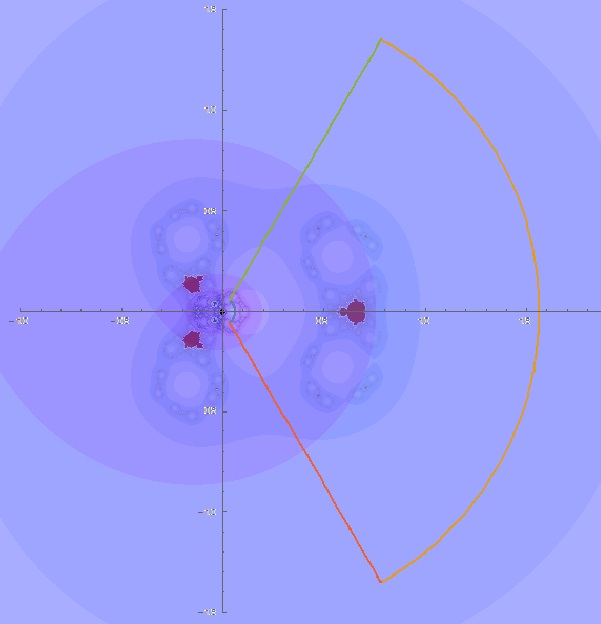}
\caption{A baby $\mandel$ sitting in $\setWa$ in the $a$-parameter plane of $R_{n,a,c}$ for $n=4$ and $c=-0.5$.}
\label{baby_mandel_in_W_Figure} 
\end{figure}

\begin{lemma}
\label{symmetry_through_negz_negc}
For $n$ odd, $R^{m}_{n,a,-c}(-z) = -R^{m}_{n,a,c}(z)$ for all $m \in \mathbb{N}$.
\end{lemma}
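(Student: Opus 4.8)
The plan is to prove the $m=1$ case by a direct computation and then bootstrap to all $m$ by induction, viewing the relation as the statement that the linear involution $g(z) = -z$ conjugates $R_{n,a,c}$ to $R_{n,a,-c}$.

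First I would compute, for odd $n$ (so that $(-1)^n = -1$),
\[
R_{n,a,-c}(-z) = (-z)^n + \frac{a}{(-z)^n} + (-c) = -z^n - \frac{a}{z^n} - c = -\left(z^n + \frac{a}{z^n} + c\right) = -R_{n,a,c}(z).
\]
This is precisely the $m=1$ case, and phrased functionally it says $R_{n,a,-c} \circ g = g \circ R_{n,a,c}$.

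For the inductive step, suppose $R^{m}_{n,a,-c}(-z) = -R^{m}_{n,a,c}(z)$ for all $z$ for which the iterates are defined. Then
\[
R^{m+1}_{n,a,-c}(-z) = R_{n,a,-c}\bigl(R^{m}_{n,a,-c}(-z)\bigr) = R_{n,a,-c}\bigl(-R^{m}_{n,a,c}(z)\bigr) = -R_{n,a,c}\bigl(R^{m}_{n,a,c}(z)\bigr) = -R^{m+1}_{n,a,c}(z),
\]
where the third equality is the $m=1$ identity applied at the point $w = R^{m}_{n,a,c}(z)$. This closes the induction.

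There is no substantive obstacle here; the only points requiring any care are that the sign flip on $z$ must be paired with the sign flip on $c$ (for odd $n$ the $z^n$ and $a/z^n$ terms already change sign, but the additive constant does not, so it must be pre-flipped), and that $z$ and $-z$ lie in each other's natural domains of definition so the compositions make sense. I would also note the immediate consequence that $z\mapsto -z$ carries the filled Julia set of $R_{n,a,c}$ onto that of $R_{n,a,-c}$, and in particular exchanges the two critical values $v_\pm$ appropriately between the $c$ and $-c$ dynamical planes; this is exactly the mechanism that will let the baby Mandelbrot set produced for $-1\le c\le 0$ be transported to the range $0\le c\le 1$ when $n$ is odd.
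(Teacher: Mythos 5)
Your proof is correct and follows essentially the same route as the paper: the direct computation for $m=1$ using $(-1)^n=-1$ for odd $n$, followed by induction (the paper peels off the innermost iterate while you peel off the outermost, a cosmetic difference). The functional reformulation as the conjugacy $R_{n,a,-c}\circ g = g\circ R_{n,a,c}$ and the remark about transporting filled Julia sets and critical orbits are accurate and match how the paper uses the lemma afterward.
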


\begin{proof}
We prove this inductively, so let the base case be the first iterate of $R_{n,a,c}$: 

\begin{eqnarray*}
R_{n,a,-c}(-z)&=& (-z)^n + \dfrac{a}{(-z)^n} - c\\
&=& -z^n - \dfrac{a}{z^n} - c \hspace{1cm}  \text{(Since n is odd)}\\
&=& -\left( z^n + \dfrac{a}{z^n} + c \right)\\
&=& -R_{n,a,c}(z).\\
\end{eqnarray*}

Now assuming the hypothesis is true for $m-1$, $R^{m-1}_{n,a,-c}(-z) = -R^{m-1}_{n,a,c}(z)$, then

\begin{eqnarray*}
R^{m}_{n,a,-c}(-z)&=& R^{m-1}_{n,a,-c}(R_{n,a,-c}(-z))\\
&=& R^{m-1}_{n,a,-c}(-(R_{n,a,c}(z))) \hspace{1cm} \text{(from the base case)}\\
&=& -R^{m-1}_{n,a,c}(R_{n,a,c}(z))\hspace{1cm} \text{(from the inductive assumption)}\\
&=& -R^{m}_{n,a,c}(z).\\
\end{eqnarray*}

And this result holds for all iterates of $R_{n,a,c}$.
\end{proof}

\begin{lemma}
\label{Crit_Orbits_Symm_Through_NegC_Lemma_APlane}
For $n$ odd, $R^{m}_{n,a,-c}(v_-) = -R^{m}_{n,a,c}(v_+)$, that is that the behavior of the critical orbits are symmetric through $c$ and $-c$.
\end{lemma}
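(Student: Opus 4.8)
The plan is to reduce this statement immediately to Lemma~\ref{symmetry_through_negz_negc}, treating it essentially as a corollary. First I would unwind the notation carefully: the symbol $v_-$ on the left-hand side denotes the critical value of the map $R_{n,a,-c}$, that is $v_- = (-c) - 2\sqrt{a}$, while the $v_+$ on the right-hand side is the critical value of $R_{n,a,c}$, namely $v_+ = c + 2\sqrt{a}$. The key elementary observation is that these two quantities are negatives of one another:
$$
(-c) - 2\sqrt{a} \;=\; -\bigl(c + 2\sqrt{a}\bigr) \;=\; -v_+ .
$$
Here one fixes the same determination of $\sqrt{a}$ on both sides; this is legitimate since the $2n$ critical points of $R_{n,a,c}$ all map to $v_\pm = c \pm 2\sqrt{a}$, so a single choice of branch may be made throughout.

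With this identification in hand, the statement follows in one line. Lemma~\ref{symmetry_through_negz_negc} gives $R^{m}_{n,a,-c}(-z) = -R^{m}_{n,a,c}(z)$ for every $m \in \mathbb{N}$ and every admissible $z$ (using that $n$ is odd); substituting $z = v_+$ yields
$$
R^{m}_{n,a,-c}(v_-) \;=\; R^{m}_{n,a,-c}(-v_+) \;=\; -R^{m}_{n,a,c}(v_+),
$$
which is exactly the asserted identity, valid for all $m \in \mathbb{N}$.

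The only point requiring any attention — the ``main obstacle'', such as it is — is the bookkeeping of which map each critical value belongs to, and confirming that the branch of $\sqrt{a}$ can be chosen consistently so that the $v_-$ of $R_{n,a,-c}$ genuinely equals $-v_+$ of $R_{n,a,c}$ (rather than $-v_-$). No analytic input beyond Lemma~\ref{symmetry_through_negz_negc} is needed, and in particular the parameter restrictions of this section play no role, so the conclusion holds for all $a$ and all real $c$ with $n$ odd. I would close with the remark that, together with the involution symmetry (Lemma~\ref{involution_prop}) and the $a$-plane results already established, this identity is precisely the device that will let a $v_+$ baby $\mandel$ for the parameter $c$ be carried over to a $v_-$ baby $\mandel$ for the parameter $-c$.
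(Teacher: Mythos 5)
Your argument is correct and is essentially the paper's own proof: identify $v_-$ of $R_{n,a,-c}$ as $-c-2\sqrt{a} = -(c+2\sqrt{a}) = -v_+$ and apply Lemma~\ref{symmetry_through_negz_negc} with $z = v_+$. The extra remark about fixing a consistent branch of $\sqrt{a}$ is a reasonable clarification but does not change the substance.
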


\begin{proof}
Using Lemma \ref{symmetry_through_negz_negc} we find for every positive integer $m$,
\begin{eqnarray*}
R_{n,a,-c}^m(v_-) &=& R_{n,a,-c}^m(-c-2\sqrt{a})\\ 
&=& R_{n,a,-c}^m(-(c+2\sqrt{a})) = -R_{n,a,c}^m(c+2\sqrt{a}) = -R_{n,a,c}^m(v_+)
\end{eqnarray*}

and the critical orbits are symmetric about $c$ and $-c$.
\end{proof}

Lemma \ref{Crit_Orbits_Symm_Through_NegC_Lemma_APlane}  says that the boundedness locus in the $a$-parameter plane for $c$ and $-c$ are the same when $n$ is odd. With this we can combine our results to gain existence of another baby $\mandel$.


\begin{corollary}
\label{APlane_V-_Corollary}
For odd $n \geq 3$ and $0 \leq c \leq 1$, the set of $a$-values within $\setWa$ for which the orbit of $v_-$ does not escape $\setU'$ is homeomorphic to $\mandel$.
\end{corollary}

\begin{proof}
The proof of this comes from the existence of the $v_+$ baby $\mandel$ in Theorem \ref{V+Mandel_APlane_Cnegative_theorem} and the symmetry of the critical orbits in Lemma \ref{Crit_Orbits_Symm_Through_NegC_Lemma_APlane}.
\end{proof}

Figure \ref{aPlane_n5_vMinus_Mandel_Figure} shows an example $a$-plane of $R_{n,a,c}$ showing the existence of this $v_-$ baby $\mandel$ in the same spot that a $v_+$ baby $\mandel$ would be promised by the symmetry. This result finishes Main Theorem \ref{Main_Theorem_APlane}.

\begin{figure}
\centering
\includegraphics[width=0.75\textwidth,keepaspectratio]{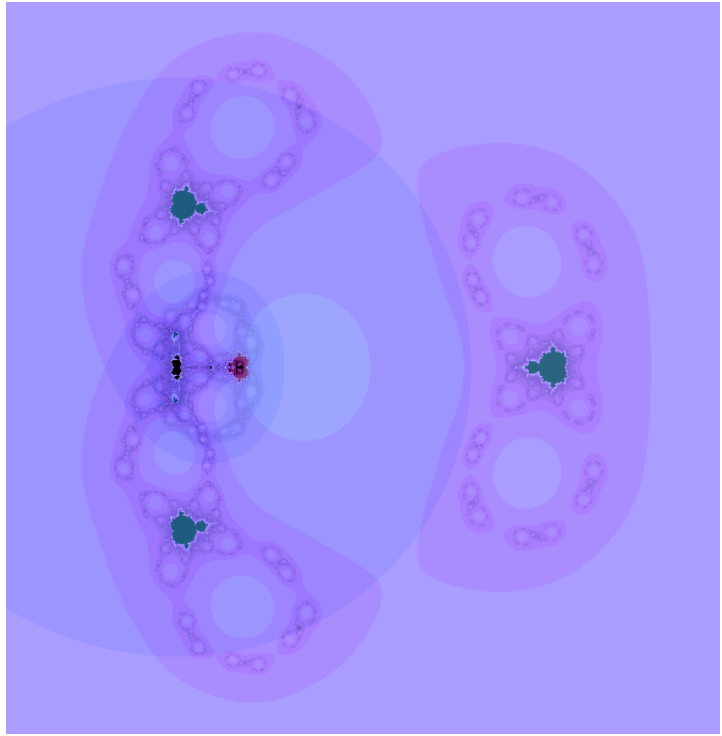}
\caption{\label{aPlane_n5_vMinus_Mandel_Figure}The $a$-parameter plane of $R_{n,a,c}$ with $n=5$ and $c=0.5$. The baby $\mandel$ associated with $v_-$ is on the right side of the picture.}
\end{figure}

\section{The case of $a$ fixed, $c$ varying}
\label{Main_Two_Section}

In this section, our goal is to establish Main Theorem \ref{Main_Theorem_CPlane}, item (i).
We work under the following assumptions:
\begin{itemize}
\item $n \geq 5$,
\item $1 \leq a \leq 4$,
\item $c$ chosen such that $\lvert v_+ \rvert \leq 2$.
\end{itemize}

\subsection{Dynamical Plane Results} 

As with the case of $c$ fixed and $a$ varying, we start with some results about the dynamical plane under these new restrictions, and use the result from \cite{boydschul} to restrict where the Julia set of $R_{n,a,c}$ may lie under these new conditions.

\begin{lemma}
\label{EscapeRadius_lemma_cplaneCase}
If $n \geq 5$, $1 \leq a \leq 4$, and $c$ is chosen such that $\lvert v_+ \rvert \leq 2$, then the filled Julia set of $R_{n,a,c}$ lies in $\mathbb{D}(0,2)$.
\end{lemma}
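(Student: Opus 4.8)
The plan is to mirror the proof of Lemma~\ref{EscapeRadius_lemma_aplaneCase}, invoking the escape-radius criterion extracted from \cite{boydschul} in the proof of Lemma~\ref{Julia_Set_Restriction_lemma}: if $N$ satisfies $(1+\epsilon)^N > 3\,\mathrm{Max}\{1,\lvert a\rvert,\lvert c\rvert\}$, then for all $n \geq N$ every point with $\lvert z\rvert > 1+\epsilon$ has orbit tending to $\infty$ under $R_{n,a,c}$. So the whole proof reduces to bounding $\mathrm{Max}\{1,\lvert a\rvert,\lvert c\rvert\}$ under the new hypotheses and then checking that $n \geq 5$ beats the resulting threshold with the choice $\epsilon = 1$.

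First I would bound the parameters. Since $1 \leq a \leq 4$ we have $\lvert a\rvert \leq 4$. For $c$, the only constraint is indirect: the hypothesis $\lvert v_+\rvert \leq 2$ together with the triangle inequality gives $\lvert c\rvert = \lvert v_+ - 2\sqrt{a}\rvert \leq \lvert v_+\rvert + 2\sqrt{a} \leq 2 + 2\sqrt{4} = 6$. Hence $\mathrm{Max}\{1,\lvert a\rvert,\lvert c\rvert\} \leq 6$.

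Next, set $\epsilon = 1$. Then the criterion requires $2^N > 3\cdot 6 = 18$, i.e. $N > \ln 18/\ln 2 \approx 4.17$, so $N = 5$ works and therefore every $n \geq 5$ satisfies it. Consequently the orbit of any $\lvert z\rvert > 2$ escapes to $\infty$, so the filled Julia set of $R_{n,a,c}$ is contained in $\overline{\mathbb{D}(0,2)}$; more generally, for any prescribed $\epsilon > 0$ the same computation with that $\epsilon$ produces an $N$ (possibly larger than $5$) beyond which the filled Julia set lies in $\mathbb{D}(0,1+\epsilon)$, which is how the conclusion is phrased.

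There is no serious obstacle here; the only step that takes any thought is the bound $\lvert c\rvert \leq 6$, which has to be read off from $\lvert v_+\rvert \leq 2$ rather than from a direct restriction on $c$ as in the $a$-plane case of Lemma~\ref{EscapeRadius_lemma_aplaneCase}. The slight mismatch between the stated radius $1+\epsilon$ and the working escape radius $2$ used in the sequel is intrinsic: for small $\epsilon$ the inequality $(1+\epsilon)^n > 18$ forces large $n$, whereas $\epsilon = 1$ already suffices for all $n \geq 5$, and radius $2$ is all that is needed later.
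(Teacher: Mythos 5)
Your proposal is correct and follows essentially the same route as the paper: invoke the escape-radius criterion from \cite{boydschul} with $\epsilon=1$, bound $\lvert c\rvert\leq 6$ from $\lvert v_+\rvert\leq 2$ and $a\leq 4$ (the paper notes the extremal case $c=-6$, your triangle-inequality version is the same content), and check $2^N>18$ gives $N\geq 5$. The only difference is cosmetic: you make the $\lvert c\rvert\leq 6$ bound slightly more explicit and comment on the $\mathbb{D}(0,1+\epsilon)$ phrasing, which the paper leaves loose.
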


\begin{proof}
Once again, by the proof of Lemma \ref{Julia_Set_Restriction_lemma} in \cite{boydschul} for any $\epsilon > 0$, if $N$ satisfies $(1+\epsilon)^N > 3\text{Max} \lbrace 1,\lvert a \rvert, \lvert c \rvert \rbrace$ then for $n \geq N$ we have the escape radius of $1+\epsilon$. That is, the orbits of values $\lvert z \rvert > 1+\epsilon$ escape to $\infty$.  We set $\epsilon=1$.  
The largest possible $\lvert c \rvert$  such that $\lvert v_+ \rvert<2$ is $c = -6$, since $\lvert  -6+2\sqrt{4} \rvert = 2$.  Therefore the modulus of $c$ is bounded by $6$. hence, 
$$
3\text{Max} \lbrace 1,\lvert a \rvert, \lvert c \rvert \rbrace = 3\text{Max} \lbrace 1, 4, 6 \rbrace = 18.
$$
So when we solve $(1+\epsilon)^N = 2^N > 18$ for $N$ we find $N > \dfrac{\ln(18)}{\ln(2)} \approx 4.17$ and $n \geq 5$ will satisfy the criterion.  Because the filled Julia set of $R_{n,a,c}$ is the points whose orbits are bounded, then the Julia set must lie within $\mathbb{D}(0,2)$.
\end{proof}

Combining this with Lemma \ref{involution_prop} again yields the same annulus as in the $a$-plane case.

\begin{lemma}
\label{EscapeAnnulus_lemma_cplaneCase}
With the same assumptions on $n$, $a$, and $c$ as Lemma \ref{EscapeRadius_lemma_cplaneCase}, the filled Julia set of $R_{n,a,c}$ lies within the annulus $\mathbb{A}\left(\dfrac{\lvert a \rvert^{\frac{1}{n}}}{2} ,~2\right)$.
\end{lemma}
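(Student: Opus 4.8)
The plan is to argue exactly as in the proof of Lemma~\ref{EscapeAnnulus_lemma_aplaneCase}: the target annulus $\mathbb{A}\!\left(\tfrac{|a|^{1/n}}{2},2\right)$ is the same one, and the only thing that has changed is the hypothesis set, which still delivers the outer escape radius $2$ through Lemma~\ref{EscapeRadius_lemma_cplaneCase}. Thus the ``outer'' half of the claim is immediate: by Lemma~\ref{EscapeRadius_lemma_cplaneCase}, every $z$ with $|z|>2$ has orbit tending to $\infty$, so the filled Julia set of $R_{n,a,c}$ is contained in $\overline{\mathbb{D}(0,2)}$. It therefore remains only to exclude the inner disk $\{\,|z|\le \tfrac{|a|^{1/n}}{2}\,\}$.

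For the inner estimate I would invoke the involution symmetry of Lemma~\ref{involution_prop}. Writing $h_a(z)=a^{1/n}/z$, we have $R_{n,a,c}\circ h_a = R_{n,a,c}$, and hence $R_{n,a,c}^k(z)=R_{n,a,c}^k(h_a(z))$ for every $k\ge 1$: the two orbits coincide from the first iterate onward. If $|z|\le \tfrac{|a|^{1/n}}{2}$ then $|h_a(z)| = \tfrac{|a|^{1/n}}{|z|}\ge 2$, so by the outer escape radius the orbit of $h_a(z)$, and therefore the orbit of $z$, escapes to $\infty$. Consequently no such $z$ lies in the filled Julia set. Combining the two exclusions shows the filled Julia set lies in $\mathbb{A}\!\left(\tfrac{|a|^{1/n}}{2},2\right)$, as claimed.

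I expect no real obstacle here: the lemma is a direct corollary of Lemma~\ref{EscapeRadius_lemma_cplaneCase} together with Lemma~\ref{involution_prop}, in precisely the way Lemma~\ref{EscapeAnnulus_lemma_aplaneCase} followed from Lemma~\ref{EscapeRadius_lemma_aplaneCase}. The only mild point to be careful about is the boundary circle $|z|=\tfrac{|a|^{1/n}}{2}$, where $h_a$ lands on the circle $|w|=2$; this is handled exactly as in Lemma~\ref{EscapeAnnulus_lemma_aplaneCase}, using that the filled Julia set was already confined to the closed disk of radius $2$ (equivalently, that points of modulus $2$ also escape for these parameters), which recovers the strict inequality defining the open annulus.
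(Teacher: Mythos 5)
Your proposal is correct and is essentially the paper's own argument: the paper states this lemma without a separate proof, remarking only that combining Lemma~\ref{EscapeRadius_lemma_cplaneCase} with the involution of Lemma~\ref{involution_prop} yields the same annulus as in Lemma~\ref{EscapeAnnulus_lemma_aplaneCase}, which is exactly the outer-radius-plus-$h_a$ argument you give (indeed, your handling of the orbit identification $R_{n,a,c}^k(z)=R_{n,a,c}^k(h_a(z))$ and of the boundary circles is, if anything, more careful than the paper's).
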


In this case of $a$ fixed, when observing the $c$-parameter plane we can actually show there exist $2n$ baby $\mandel$'s. Each baby $\mandel$ has a unique corresponding $\setU'$ that is a rotational copy of our original definition from Equation \eqref{UPrime_Equation_Definition}:


\begin{equation}
\boxed{
\setU'_k = \setU'_{n,k,a}=\left\{  z=re^{i\theta} \   \middle|  \ \ \frac{\lvert a \rvert^{\frac{1}{n}}}{2}<~r~<2, \ \ \dfrac{4\pi k-\pi}{2n}<~\theta~<\dfrac{4\pi k+\pi}{2n} \right\},}
\end{equation}

for $k=0,...,n-1$. Note that $\setU'_0$ is the same as Equation \eqref{UPrime_Equation_Definition} in the case the $a$ is real, hence $\psi = 0$.  Each $k$ represents a different rotational copy inside $\mathbb{A}\left(\dfrac{\lvert a \rvert^{\frac{1}{n}}}{2} ,~2\right)$.  Now we define $n$ $\mathbf{W}$ sets in the $c$-parameter plane that will contain baby $\mandel$s (we will produce $n$ more using symmetry):


\begin{equation}
\boxed{
\setWck = 
\left\{  c \   \middle|  \ \ \dfrac{a^{1/n}}{2} \leq \left| v_+ \right| \leq 2,\ \  \dfrac{4\pi k-\pi}{2n} \leq \Arg(v_+) \leq \dfrac{4\pi k+\pi}{2n} \right\}}.
\label{eqn:defnWck}
\end{equation}

for $k=0,1,..,n-1$. For a fixed $k$, $\setWck$ is the set of $c$ values such that $v_+ \in \overline{\setU'_k}$. Thus each $\setWck$ is associated with a unique $\setU'_k$.  Conveniently, each $\setU'_k$ maps to the same image.

\begin{lemma}
$R_{n,a,c}(\setU'_k)=R_{n,a,c}(\setU'_0)$ for all $k$ in $\lbrace 0,1,...,n-1 \rbrace$.
\end{lemma}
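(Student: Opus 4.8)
The plan is to exploit the rotational symmetry of $R_{n,a,c}$: the map is unchanged when $z$ is replaced by $\omega z$ for any $n$-th root of unity $\omega$, and each $\setU'_k$ is exactly such a rotate of $\setU'_0$. So the statement should reduce to a one-line computation once the rotational description of the $\setU'_k$ is pinned down.

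First I would set $\omega_k = e^{2\pi i k/n}$ and check that $\setU'_k = \omega_k\,\setU'_0$. This is just unwinding the definitions: a point $z = re^{i\theta}$ lies in $\setU'_k$ precisely when $\tfrac{\lvert a\rvert^{1/n}}{2} < r < 2$ and $\tfrac{4\pi k - \pi}{2n} < \theta < \tfrac{4\pi k + \pi}{2n}$. Writing $\theta = \phi + \tfrac{2\pi k}{n}$, the angular condition becomes $-\tfrac{\pi}{2n} < \phi < \tfrac{\pi}{2n}$ while the radial condition is untouched, so $z = \omega_k w$ with $w = re^{i\phi} \in \setU'_0$; conversely every $\omega_k w$ with $w \in \setU'_0$ lands in $\setU'_k$. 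Hence $\setU'_k = \omega_k\,\setU'_0$.

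Then I would compute directly, for $w \in \setU'_0$,
\[
R_{n,a,c}(\omega_k w) = (\omega_k w)^n + \frac{a}{(\omega_k w)^n} + c = \omega_k^n w^n + \frac{a}{\omega_k^n w^n} + c = w^n + \frac{a}{w^n} + c = R_{n,a,c}(w),
\]
using $\omega_k^n = 1$. Therefore $R_{n,a,c}(\setU'_k) = R_{n,a,c}(\omega_k\,\setU'_0) = \{\,R_{n,a,c}(\omega_k w) : w \in \setU'_0\,\} = \{\,R_{n,a,c}(w) : w \in \setU'_0\,\} = R_{n,a,c}(\setU'_0)$, which is the claim.

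There is essentially no obstacle here; the only point worth flagging is that the angular bounds of $\setU'_k$ were written with $4\pi k$ (rather than $2\pi k$) in the numerator precisely so that the center of $\setU'_k$ sits at argument $\tfrac{2\pi k}{n}$, making the rotation that carries $\setU'_0$ onto $\setU'_k$ multiplication by the genuine $n$-th root of unity $\omega_k$ — and it is exactly this fact that leaves the $z^n$ and $a/z^n$ terms invariant. (This lemma is also the reason that each $\setWck$ is associated with a $\setU'_k$ that has the same image $\setU$, so that the earlier dynamical-plane analysis transfers verbatim to all $k$.)
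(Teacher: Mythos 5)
Your proof is correct. The mechanism is the same algebraic fact the paper uses --- $(e^{2\pi i k/n})^n = 1$ kills the rotation in both $z^n$ and $a/z^n$ --- but your route through it is genuinely different and, if anything, tidier. The paper argues at the level of boundary curves: it invokes the involution symmetry (Lemma \ref{involution_prop}) to say the inner and outer arcs of every $\setU'_k$ have the same image, and then computes $R_{n,a,c}\bigl(r e^{i(4\pi k \pm \pi)/(2n)}\bigr) = \pm\bigl(r^n - \tfrac{a}{r^n}\bigr)i + c$ to see the bounding rays also map to the same segments as those of $\setU'_0$; equality of the images of the regions is then left implicit. You instead establish the pointwise identity $\setU'_k = \omega_k \setU'_0$ with $\omega_k = e^{2\pi i k/n}$ and $R_{n,a,c}(\omega_k w) = R_{n,a,c}(w)$, which gives $R_{n,a,c}(\setU'_k) = R_{n,a,c}(\setU'_0)$ as sets immediately, with no need to pass through the boundary or appeal to the involution. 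That makes your argument slightly more self-contained and closes the small gap of inferring equality of images of open regions from equality of images of their boundary pieces; the paper's version, on the other hand, yields as a by-product the explicit description of the image boundary (the minor-axis segment of $\ellipse$) that it reuses in the subsequent containment lemmas.
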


\begin{proof}
The image of the outside and inside curves of each $\setU'_k$ will still map to the same curve in $\setU$ as before by Lemma \ref{involution_prop}, so we examine the image of the rays of $\setU'_k$.

\begin{eqnarray*}
&~& R_{n,c,a}\left( r*\exp \left(i\frac{4\pi k \pm \pi}{2n} \right) \right)\\
& = & \left( r*\exp \left(i\frac{4\pi k \pm \pi}{2n} \right) \right) ^{n}+\frac{a}{\left( r*\exp \left(i\frac{4\pi k \pm \pi}{2n} \right) \right) ^{n}}+c\\
& = & r^{n}*\exp \left(i\frac{4\pi k \pm \pi}{2} \right)+\frac{a}{r^{n}*\exp \left(i\frac{4\pi k \pm \pi}{2} \right)}+c\\
& = & r^{n}*\exp \left(i\frac{4\pi k \pm \pi}{2} \right)+\frac{a}{r^{n}}*\exp \left(i\frac{4\pi k \mp \pi}{2} \right)+c\\
& = & \exp \left(i{2\pi k} \right) \left(r^{n}*\exp \left(\pm i\frac{\pi}{2} \right)+\frac{a}{r^{n}}*\exp \left(\mp i\frac{\pi}{2} \right) \right)+c \\
& = & \pm \left(r^{n} - \frac{a}{r^{n}}\right)i+c.
\end{eqnarray*}

This being equal to the image of the rays of $\setU'_0$ yields our result and
$R_{n,a,c}(\setU'_k)=R_{n,a,c}(\setU'_0)$ for all $k$ in $\lbrace 0,1,...,n-1 \rbrace$.
\end{proof}

Now we continue the process by showing that $R_{n,a,c}$ is polynomial-like of degree two on each $\setU'_k$.  First we show that each $\setU'_k$ is contained in $\ellipse$.

\begin{lemma}
\label{Uprime_contained_in_ellipse_afixed_case_lemma}
Let $n \geq 5$, $1 \leq a \leq 4$ and $c \in \setWck$ for $k=0,1...n-1$. Then $\overline{\setU'_k} \subset \ellipse$.   
\end{lemma}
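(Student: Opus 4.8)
The plan is to reuse the computation already carried out in Lemma~\ref{u'inellipse}, but now with the parameter ranges of the $a$-fixed case. Recall from Equation~\eqref{Ellipse_Equation_Second_Definition} that a point $z$ lies inside $\ellipse$ precisely when $\lvert z - v_- \rvert + \lvert z - v_+ \rvert \le 2^{n+1} + \frac{\lvert a \rvert}{2^{n-1}}$. Since each $\setU'_k$ is a subset of $\mathbb{A}\!\left(\frac{\lvert a \rvert^{1/n}}{2},2\right)$ by construction, it suffices to bound $\lvert z - v_- \rvert + \lvert z - v_+ \rvert$ for $\lvert z \rvert \le 2$ and show the bound is at most $2^{n+1}$.

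First I would estimate the two focal distances. For $\lvert z \rvert \le 2$ we have $\lvert z - v_\pm \rvert \le \lvert z \rvert + \lvert v_\pm \rvert \le 2 + \lvert v_\pm \rvert$, so the sum is at most $4 + \lvert v_+ \rvert + \lvert v_- \rvert$. The defining constraint on $\setWck$ gives $\lvert v_+ \rvert \le 2$; for $\lvert v_- \rvert$ I would use $v_\pm = c \pm 2\sqrt{a}$, so $\lvert v_- \rvert = \lvert v_+ - 4\sqrt{a} \rvert \le \lvert v_+ \rvert + 4\sqrt{a} \le 2 + 4\sqrt{4} = 10$ when $1 \le a \le 4$. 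Hence $\lvert z - v_- \rvert + \lvert z - v_+ \rvert \le 4 + 2 + 10 = 16 = 2^4 \le 2^{n+1}$ for $n \ge 3$, and in particular for $n \ge 5$. Since $\frac{\lvert a \rvert}{2^{n-1}} > 0$, the strict inequality $< 2^{n+1} + \frac{\lvert a\rvert}{2^{n-1}}$ holds, which places all of $\overline{\setU'_k}$ strictly inside $\ellipse$ (the closure causes no trouble because the crude bound $16$ is already strictly below $2^{n+1}$).

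The only point requiring a little care is that $\ellipse$ does not depend on $k$: it is the common image-ellipse from Lemma~\ref{uhalfellipse_lemma} (with $\psi = 0$ since $a$ is real and positive), and the preceding lemma showed $R_{n,a,c}(\setU'_k) = R_{n,a,c}(\setU'_0)$, so a single $\ellipse$ governs all the $\setU'_k$ simultaneously. Thus the containment $\overline{\setU'_k} \subset \ellipse$ follows for every $k \in \{0,1,\dots,n-1\}$ from one estimate. I do not anticipate a genuine obstacle here; the main thing to be careful about is simply tracking which parameter bounds ($\lvert v_+\rvert \le 2$, $a \le 4$) feed into which term of the triangle-inequality estimate, exactly as in Lemma~\ref{u'inellipse} but with the $c$-plane constraints substituted in.
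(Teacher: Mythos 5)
Your proposal is correct and follows essentially the same route as the paper: reduce to bounding $\lvert z-v_-\rvert+\lvert z-v_+\rvert$ for $\lvert z\rvert\le 2$ via Equation \eqref{Ellipse_Equation_Second_Definition}, using $\lvert v_+\rvert\le 2$ from the definition of $\setWck$ and $\lvert v_-\rvert\le 2+4\sqrt{a}$, to get the total $16<2^{n+1}$. The only cosmetic difference is that you obtain the bound on $\lvert v_-\rvert$ by a one-line triangle inequality, whereas the paper parametrizes $c=2e^{i\theta}-2\sqrt{a}$ and maximizes over $\theta$ with calculus; both give the same estimate.
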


\begin{proof}
Since each $\setU'_k \subset \mathbb{D}(0,2)$, we just need to prove $\overline {\mathbb{D}(0,2)} \subset \ellipse$ under these restrictions of parameters. Since $c$ lies in some $\setWck$, we have $v_+ \in \setU'_k$.  This means $v_+$ has a maximum modulus of 2 so we start with the equation
$$
c+2\sqrt{a}=2e^{i\theta},~\theta \in [0,2\pi).
$$
Solving for $c$ gets us $c=2e^{i\theta}-2\sqrt{a}$ and we attain bounds for $v_-$ as well:
\begin{eqnarray*}
& ~ & \lvert v_+ \rvert = 2 \\
& \Rightarrow & \lvert v_- \rvert = \lvert c-2\sqrt{a} \rvert = \lvert ( 2e^{i\theta}- 2\sqrt{a} ) - 2\sqrt{a} \rvert = \sqrt{4+16a-16 \cos(\theta) \sqrt{a}}.\\
\end{eqnarray*}
To find the largest value of $\lvert v_- \rvert$, we take a derivative with respect to $\theta$: 
$$
\dfrac{8\sqrt{a}\sin(\theta)}{\sqrt{4+16a-16 \cos(\theta) \sqrt{a}}}.
$$ 
This means $\lvert v_- \rvert$ has critical points at $\theta=\lbrace 0,\pi\rbrace \in [0,2\pi)$. By the second derivative test, $\theta=\pi$ gives us a maximum modulus of $2+4\sqrt{a}$.

As in the proof of Lemma \ref{u'inellipse}, we use the Equation \eqref{Ellipse_Equation_Second_Definition} description of $\ellipse$. Using the facts that $\lvert z \rvert \leq 2$, $\lvert c+2\sqrt{a} \rvert \leq 2$ and $\lvert c-2\sqrt{a} \rvert \leq 2 + 4\sqrt{a}$ we find
\begin{eqnarray*}
&~& \lvert z-v_- \rvert + \lvert z-v_+ \rvert\\
&=& \lvert z-(c-2\sqrt{a}) \rvert + \lvert z-(c+2\sqrt{a}) \rvert\\
& \leq & 2\lvert z \rvert + \lvert c+2\sqrt{a} \rvert + \lvert c-2\sqrt{a} \rvert \\
& \leq & 4 + (2) + (2 + 4\sqrt{a})\\
& = & 8 + 4\sqrt{4} \\
& = &  16 \\
(\text{thus since}~n \geq 5)& < & 2^{n+1} \leq ~ 2^{n+1} + \frac{\lvert a \rvert}{2^{n-1}}.
\end{eqnarray*}
Hence $\setU'_k \subset \overline{\mathbb{D}(0,2)} \subset \ellipse$ for each $k=0,1,...n-1$.
\end{proof}

Continuing further, we prove that each $\setU'_k$ is contained in its image $\setU$ under the correct restrictions of parameters.

\begin{lemma} 
\label{UPrimeK_Contained_ForAllK_Lemma}
Let $n \geq 5$, $1 \leq a \leq \dfrac{(2^{n+1}-8)^2}{16}$, and fix $k$ from $\lbrace 0,1,...,n-1 \rbrace$. If~$c \in \setWck$, then $\setU'_k \subset \setU$.
\end{lemma}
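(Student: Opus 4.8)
The plan is to imitate the $a$-plane argument (Lemmas~\ref{u'inellipse} and~\ref{noaxisintercept}): the containment $\setU'_k\subset\ellipse$ is essentially already available, so what remains is to place $\setU'_k$ on the correct side of the minor axis of $\ellipse$. Since here $a$ is real and positive, $\psi=\Arg(a)=0$, so Lemma~\ref{uhalfellipse_lemma} together with the preceding lemma (that $R_{n,a,c}(\setU'_k)=R_{n,a,c}(\setU'_0)$) identifies $\setU$ as exactly the half of the filled ellipse $\ellipse$ centered at $c$ that lies in the closed half-plane $\{\operatorname{Re}(z)\ge\operatorname{Re}(c)\}$, its minor axis being the vertical segment through $c$. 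I would first note that Lemma~\ref{Uprime_contained_in_ellipse_afixed_case_lemma} persists on the wider range $1\le a\le\tfrac{(2^{n+1}-8)^2}{16}$: its proof only uses $\lvert z-v_-\rvert+\lvert z-v_+\rvert\le 8+4\sqrt a$, and $8+4\sqrt a\le 2^{n+1}<2^{n+1}+\tfrac{a}{2^{n-1}}$ is precisely the stated bound on $a$. Thus $\overline{\setU'_k}\subset\ellipse$ throughout our range, and it suffices to show $\operatorname{Re}(z)\ge\operatorname{Re}(c)$ for every $z\in\setU'_k$.

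I would then turn this into a short trigonometric estimate. By definition, $c\in\setWck$ means $v_+=c+2\sqrt a\in\overline{\setU'_k}$, so $\operatorname{Re}(c)=\operatorname{Re}(v_+)-2\sqrt a$. Writing $z=re^{i\theta}\in\setU'_k$ and $v_+=\rho e^{i\phi}\in\overline{\setU'_k}$, both $\theta$ and $\phi$ lie in the common arc $I_k=[\tfrac{4\pi k-\pi}{2n},\tfrac{4\pi k+\pi}{2n}]$ of length $\tfrac{\pi}{n}$, while $r,\rho\in[\tfrac{a^{1/n}}{2},2]$. The inequality $\operatorname{Re}(z)\ge\operatorname{Re}(c)$ becomes
\[
\rho\cos\phi-r\cos\theta\le 2\sqrt a,
\]
and since $a\ge1$ it is enough to prove $\rho\cos\phi-r\cos\theta\le 2$.

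The crux is that $\theta$ and $\phi$ live in the same short arc, so $\cos\theta$ and $\cos\phi$ are forced to be close; this is exactly what a bound on $\operatorname{Re}(z)$ and $\operatorname{Re}(v_+)$ estimated separately would miss, and the case where $I_k$ straddles an odd multiple of $\tfrac{\pi}{2}$ is the only one needing care. Since $\lvert I_k\rvert=\tfrac{\pi}{n}<\pi$, the function $\cos$ changes sign at most once on $I_k$, giving three cases. If $\cos\ge0$ on $I_k$, then $\rho\cos\phi\le2\cos\phi\le2$ and $-r\cos\theta\le0$. If $\cos\le0$ on $I_k$, then $\rho\cos\phi\le0$ and $-r\cos\theta=r\lvert\cos\theta\rvert\le2$. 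Otherwise $I_k=[\theta_0-s,\theta_0+t]$ with $\theta_0$ an odd multiple of $\tfrac{\pi}{2}$ interior to $I_k$ and $s+t=\tfrac{\pi}{n}$, $s,t>0$; then $\cos$ is monotone on $I_k$, the two terms $\rho\cos\phi$ and $-r\cos\theta$ are bounded by $2\sin s$ and $2\sin t$ in one order or the other, and by concavity of $\sin$ on $[0,\pi]$,
\[
\rho\cos\phi-r\cos\theta\le 2(\sin s+\sin t)\le 4\sin\!\big(\tfrac{\pi}{2n}\big)\le 4\sin\!\big(\tfrac{\pi}{10}\big)<2.
\]
In all three cases $\rho\cos\phi-r\cos\theta\le2\le2\sqrt a$, hence $\operatorname{Re}(z)\ge\operatorname{Re}(c)$; combined with $\setU'_k\subset\ellipse$ this yields $\setU'_k\subset\setU$. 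Because $\setU'_k$ is open (so $r<2$ strictly and $\theta$ never reaches an endpoint of $I_k$), each of these inequalities is in fact strict, which is what one wants since $\setU$ is an open set. The upper bound on $a$ enters only in the ellipse-containment step; the sign-change case, where the estimate $4\sin(\pi/2n)<2$ is invoked, is where the hypothesis $n\ge5$ is essential, and is the part I expect to take the most care.
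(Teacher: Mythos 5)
Your proof is correct and takes essentially the same route as the paper's: both reduce the claim to showing that the minor axis $\{\Re z=\Re(c)\}=\{\Re z=\Re(v_+)-2\sqrt{a}\}$ lies (weakly) to the left of $\setU'_k$, split into the same three cases according to the position of $\setU'_k$ relative to the imaginary axis, and settle the straddling case by comparing the horizontal extent of $\setU'_k$ with $2\sqrt{a}\ge 2$ (you via the estimate $4\sin(\pi/2n)<2$, the paper via the real-diameter of an enclosing wedge of opening $\pi/3$). Your explicit re-verification that $\setU'_k\subset\ellipse$ persists on the wider range $1\le a\le\frac{(2^{n+1}-8)^2}{16}$ is a nice touch, since the paper simply cites the proof of Lemma~\ref{Uprime_contained_in_ellipse_afixed_case_lemma}, which is stated only for $a\le 4$.
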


\begin{proof}

By the proof of Lemma \ref{Uprime_contained_in_ellipse_afixed_case_lemma}, $\setU'_k \subset \mathbb{D}(0,2) \subset \ellipse$ for all $k$, so we need to show the minor axis of $\ellipse$ just intersects $\partial \setU'_k$ at worst.

\textbf{CASE 1: $\setU'_k$ lies in the right half plane:}

Here the minor axis of $\ellipse$ is a vertical line since $\frac{\Arg(a)}{2}=0$ and crosses through $c$.  Thus the value of $Re(c)$ determines the horizontal position of the leftmost point of $\setU$. In the proof of Lemma~\ref{Uprime_contained_in_ellipse_afixed_case_lemma} we found for $\lvert v_+ \rvert = 2$ (its greatest modulus) that $c=2e^{i\theta}-2\sqrt{a}$.  Therefore 
$$
Re(c)=2\cos(\theta) - 2\sqrt{a} \leq 2 - 2 = 0
$$
since $a \geq 1$.  Thus if $Re(c)$ is non-positive, then the minor axis of $\ellipse$ is never in the right half dynamical plane.  Therefore the minor axis never enters the right-half plane and $\setU'_k \subset \setU$ for any $\setU'_k$ in this case. (See Figure \ref{fig:uprime_k_cases} (left).)

\begin{figure}[htbp]
\centering
\includegraphics[width=.45\textwidth]{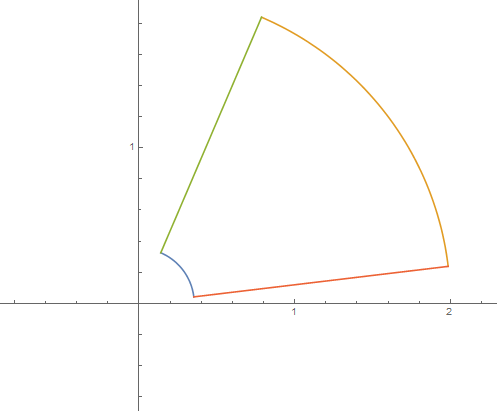} 
\includegraphics[width=.45\textwidth]{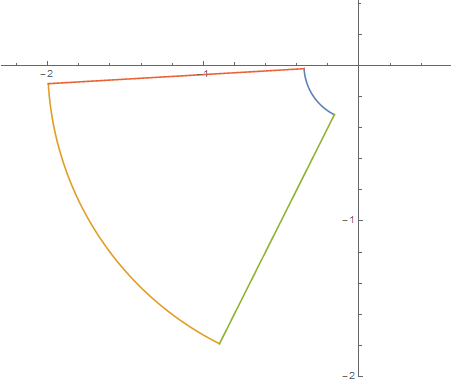} 
\caption{Cases 1 (left) and 2 (right) of Lemma 15.}
\label{fig:uprime_k_cases}
\end{figure}

\textbf{CASE 2:   $\setU'_k$ lies in the left half plane:}

Now $\setU'_k$ lies between the real values of $-2$ and $0$.  Thus at its greatest,
$$
Re(v_+)=0 \Rightarrow Re(c)=-2\sqrt{a} \leq -2
$$ 
since $a \geq 1$. Here the position of the minor axis will go no farther right than $R(c) = -2$ and the minor axis only touches $\partial \setU'_k$ which is admissible since $\setU'_k$ is open.  Thus $\setU'_k \subset \setU$ for any $\setU'_k$ lying purely in the left-half plane. (See Figure \ref{fig:uprime_k_cases} (right).)

\textbf{CASE 3:  $\setU'_k$ intersects the imaginary axis:}

The angular width of $\setU'_k$ is 
$$
\dfrac{4\pi k+\pi}{2n} - \dfrac{4\pi k-\pi}{2n}=\dfrac{\pi}{n} \leq \dfrac{\pi}{3}
$$
since $n \geq 3$.  Assuming for now $\setU'_k$ intersects the positive imaginary axis, we define a set:
$$
\setS = \left\{  z \   \middle|  \ \ \lvert z \rvert \leq 2~~and~~\dfrac{\pi}{6}+t\dfrac{\pi}{3} \leq \Arg(z)\leq \dfrac{\pi}{2}+t\dfrac{\pi}{3} \right\} ~ for ~ t \in [0,1].
$$

For any $t$, $\setS$ is a wedge of angular width $\frac{\pi}{3}$ that intersects the imaginary axis so any $\setU'_k$ that intersects the imaginary axis must be contained in a $\setS$ for some $t \in [0,1]$. Figure \ref{fig:uprime_k_case3} is a sketch of $\setS$, the dotted line represents the range of $\setS$.

\begin{figure}
\centering
\includegraphics[scale=.3]{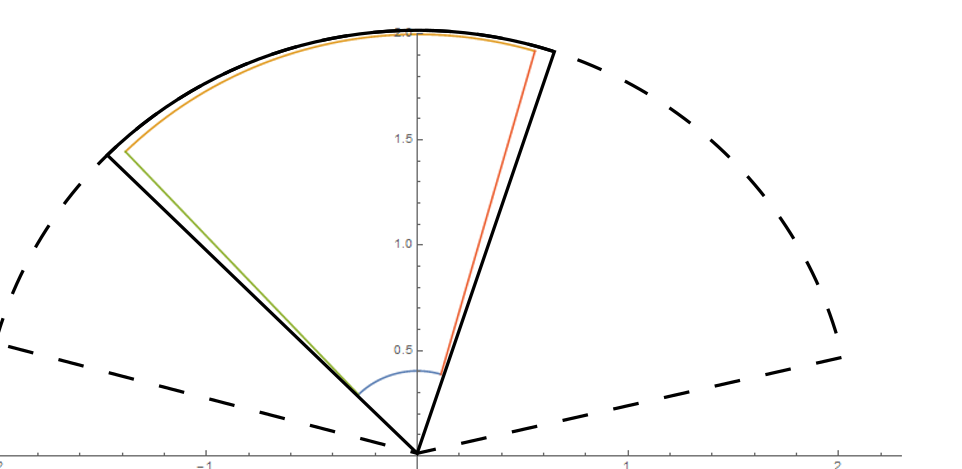} 
\caption{\label{fig:uprime_k_case3} $\setS$ containing a $\setU'_{k}$ that intersects the imaginary axis.}
\end{figure}

In determining the \textit{real-diameter} of $\setS$, we calculate the distance between the left-most and right-most points of $\setS$.  The leftmost point lies on the ray of argument $\frac{\pi}{2}+t\frac{\pi}{3}$ while the rightmost point lies on the ray of argument $\frac{\pi}{6}+t\frac{\pi}{3}$.  The real values of these points are $r_1\cos \left( \frac{\pi}{2}+t\frac{\pi}{3} \right)$ and $r_2\cos \left( \frac{\pi}{6}+t\frac{\pi}{3} \right)$ respectively, with $0 \leq r_1,r_2 \leq 2$.  Note for this range on $t$,  $-1 \leq \cos \left( \frac{\pi}{2}+t\frac{\pi}{3} \right) \leq 0$ and $0 \leq \cos \left( \frac{\pi}{6}+t\frac{\pi}{3} \right) \leq 1$.  This means that the endpoints of these rays, at $r_1=r_2=2$, will be the farthest left and right points of $\setS$.  The real-diameter is the difference of these two values,
\begin{equation}
\label{set_S_Width_equation}
2\cos \left( \dfrac{\pi}{6}+t\dfrac{\pi}{3} \right) - 2\cos \left( \dfrac{\pi}{2}+t\dfrac{\pi}{3} \right).
\end{equation}

Using differentiation to find the maximum on Equation \eqref{set_S_Width_equation}, we find the maximum value of the width occurs at $t=\frac{1}{2}$.  Plugging this in, we find the width to be 2, thus $\setS$ is no wider than 2 units. This means the ``real-width'' of $\setU'_k$ is at most 2 (since $\setU'_k \subset \setS$). Remembering that $Re(c)$ is the position of the minor axis of $\ellipse$, we see
$$
Re\left( c+2\sqrt{a} \right) - Re(c) = Re(c)+2\sqrt{a} - Re(c)=2\sqrt{a} \geq 2
$$
since $a\geq 1$. Therefore when $v_+$ is at its right-most point on $\partial \setU'_k$, the minor axis will be a distance of at least 2 units to the left, a distance greater than or equal to the ``real-width'' of $\setU'_k$ (see Figure \ref{fig:uprime_k_minor_axis}).

\begin{figure}
\centering
\includegraphics[scale=.35]{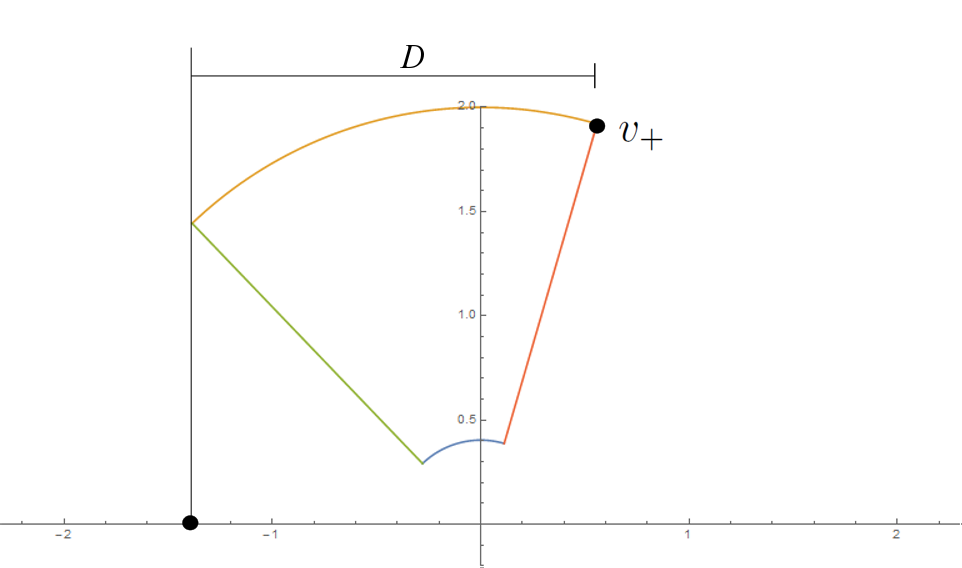}  
\caption{\label{fig:uprime_k_minor_axis} A $\setU'_{k}$ intersecting the imaginary axis.  The maximum width of $\setU'$ is ${D=2\sqrt{a}\geq 2}$ from $v_+$ to the minor axis of $\ellipse$. The vertical line, $x=Re(c)$, is the minor axis of $\ellipse$.}
\end{figure}

This means the minor axis of $\ellipse$ will not intersect $\setU'_k$. A symmetrical argument can be used for any $\setU'_k$ that intersects the negative imaginary axis, and thus $\setU'_k \subset \setU$ for all $k=0,1,...,n-1$.

\end{proof}

Knowing $\setU'_k \subset \setU$ for all ${k=0,1,...,n-1}$, we meet the requirements for a polynomial-like map.

\begin{proposition}
\label{R_polynomial_like_Each_UK_prop}
With the same hypotheses as the previous lemma, $R_{n,a,c}$ is a polynomial-like map of degree two on $\setU'_k$ when $c \in \setWck$.
\end{proposition}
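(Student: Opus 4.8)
The plan is to verify the four bullet conditions of Definition \ref{Polynomial-like_definition} one at a time, reusing the two containment lemmas just proved exactly as Proposition \ref{R_Polynomial_like_on_first_U'_aplane_prop} reused its $a$-plane analogues. First, $\setU'_k$ is by construction an open curvilinear annular sector, hence bounded, open and simply connected; and by the identity $R_{n,a,c}(\setU'_k)=R_{n,a,c}(\setU'_0)$ established above together with Lemma \ref{uhalfellipse_lemma} applied with $\psi=\Arg(a)=0$ (as $a$ is real and positive), $\setU=R_{n,a,c}(\setU'_0)$ is half of the ellipse $\ellipse$ cut along its vertical minor axis, so $\setU$ is also bounded, open and simply connected. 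Next, $R_{n,a,c}(z)=z^n+a/z^n+c$ is rational with its only pole at $z=0$, and $0\notin\setU'_k$ since every point of $\setU'_k$ has modulus at least $\lvert a\rvert^{1/n}/2>0$; hence $R_{n,a,c}$ is analytic on $\setU'_k$.

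For relative compactness of $\setU'_k$ in $\setU$, I would invoke Lemma \ref{UPrimeK_Contained_ForAllK_Lemma}, which gives $\setU'_k\subset\setU$ under the stated hypotheses ($n\ge5$, $1\le a\le\frac{(2^{n+1}-8)^2}{16}$, $c\in\setWck$), together with the Case 1--3 analysis in its proof: the only possible contact of $\overline{\setU'_k}$ with $\partial\setU$ is along the minor axis of $\ellipse$, and this is touched at worst at boundary points of the open set $\setU'_k$, so $\overline{\setU'_k}$ is a compact subset of $\setU$, exactly as in Proposition \ref{R_Polynomial_like_on_first_U'_aplane_prop}. For the degree-two condition: the critical points of $R_{n,a,c}$ are the $2n$-th roots of $a$, located at arguments $\pi j/n$ for $j=0,\dots,2n-1$ since $a>0$; the one with $j=2k$ has argument $2\pi k/n$, the midpoint of the angular range $\left(\frac{4\pi k-\pi}{2n},\frac{4\pi k+\pi}{2n}\right)$ of $\setU'_k$, and modulus $a^{1/2n}$, which satisfies $\lvert a\rvert^{1/n}/2<a^{1/2n}<2$ because $a\le4<2^{2n}$; the other critical points lie at angular distance at least $\pi/n$ from it, hence outside the half-width $\pi/(2n)$ of the sector, so $\setU'_k$ contains a unique critical point. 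That $R_{n,a,c}$ is proper and $2$-to-$1$ on $\setU'_k$ follows as in Proposition \ref{R_Polynomial_like_on_first_U'_aplane_prop}: writing $w=z^n$, the map $z\mapsto z^n$ is a conformal bijection of $\setU'_k$ onto an annular sector of angular width $\pi$, on which $w\mapsto w+a/w+c$ is $2$-to-$1$, its two preimages being interchanged by $w\mapsto a/w$, i.e.\ the involution $h_a$ of Lemma \ref{involution_prop} lifted to the $z$-plane; this map fails to be $2$-to-$1$ only at the fixed point of that involution, which is precisely the single critical point identified above. Assembling the four items, $R_{n,a,c}:\setU'_k\to\setU$ satisfies Definition \ref{Polynomial-like_definition} and is polynomial-like of degree two.

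I expect the only genuinely delicate point to be the relative-compactness requirement $\overline{\setU'_k}\subset\setU$; everything else is structural bookkeeping that is already transparent from Proposition \ref{R_Polynomial_like_on_first_U'_aplane_prop}. Since Lemma \ref{UPrimeK_Contained_ForAllK_Lemma} carries out precisely the Case 1--3 argument needed for that containment (including the key estimate $2\sqrt a\ge2\ge$ the real-width of $\setU'_k$ when $\setU'_k$ meets the imaginary axis), this proposition reduces to citing it and checking the remaining conditions, so the proof should be short.
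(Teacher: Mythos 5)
Your proposal is correct and takes essentially the same route as the paper: the paper's proof likewise just cites Lemma \ref{UPrimeK_Contained_ForAllK_Lemma} for the containment $\setU'_k \subset \setU$, notes that each $\setU'_k$ is centered on the unique critical point $\lvert a\rvert^{1/2n}e^{i 2\pi k/n}$, and appeals to analyticity and the two-to-one behavior coming from the involution. Your extra detail (the $w=z^n$ change of variables and the explicit check that the other critical points lie outside the sector) merely spells out what the paper dismisses as ``the discussion above,'' and your treatment of the relative-compactness/boundary-touching issue is at the same level of rigor as the paper's.
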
 


\begin{proof}
By design, each $\setU'_k$ is centered around a unique critical point of $R_{n,a,c}$, specifically $\lvert a \rvert^{\frac{1}{2n}}e^{i\frac{2\pi k}{n}}$.  $R_{n,a,c}$  is also a two-to-one map on each $\setU'_k$ by the discussion above. Additionally it is clear that $R_{n,a,c}$ is analytic on each $\setU'_k$. With this and Lemma \ref{UPrimeK_Contained_ForAllK_Lemma}, we fulfill Definition \ref{Polynomial-like_definition} and $R_{n,a,c}$ is a polynomial-like map of degree two on each $\setU'_k$.
\end{proof}

\subsection{Parameter Plane Results: $c$ Plane}

Knowing $R_{n,a,c}$ is polynomial-like of degree two on each $\setU'_k$, we next show how the hypothesis of Theorem \ref{DH_Mandelbrot_existence_Criterion_Theorem} is satisfied, in this case of $a$ fixed, and we locate multiple baby $\mandel$'s in the $c$-parameter plane.

We first pick a $k$ from $\lbrace 0,1,...,n-1 \rbrace$, and observe $\left\{ R_{n,a,c} \right\}_{c \in \setWck}$ as the family of functions in Theorem \ref{DH_Mandelbrot_existence_Criterion_Theorem}.  We have already shown each member of this family is polynomial-like of degree two on $\setU'_k$. Both $\partial \setU'_k$ and $\setU$ clearly vary analytically with $c$, as well as $R_{n,a,c}(z)$.  Last, the implicit definition of $\setWck$ makes $v_+$ take a closed loop around $\partial \setU'_k$ as $c$ loops around $\partial \setWck$.  Now we satisfy the hypotheses of Theorem \ref{DH_Mandelbrot_existence_Criterion_Theorem} and achieve our result.

\begin{theorem}
\label{v+_Mandels_exist_multiple_cplane}
Given $n \geq 5$, $1 \leq a \leq 4$, and a fixed $k \in \lbrace 0,1,...,n-1 \rbrace$, the set of $c \in \setWck$ such that the critical orbit of $v_+$ does not escape $\setU'_k$ is homeomorphic to $\mandel$.
\end{theorem}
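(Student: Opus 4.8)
The plan is to apply the Douady--Hubbard criterion, Theorem~\ref{DH_Mandelbrot_existence_Criterion_Theorem}, to the family
\[
F_c = R_{n,a,c}\colon \setU'_k \longrightarrow \setU, \qquad c \in \setWck,
\]
with the parameter $\lambda$ there playing the role of $c$ and $\setWck$ playing the role of the closed disk $W$. First I would record that $\setWck$ is a closed Jordan domain: since $a$ is fixed, the map $c \mapsto v_+(c) = c + 2\sqrt{a}$ is a translation, and $\setWck$ is by definition the set of $c$ with $v_+ \in \overline{\setU'_k}$, so $\setWck = \overline{\setU'_k} - 2\sqrt{a}$ is a translated copy of the curvilinear rectangle $\overline{\setU'_k}$, hence homeomorphic to a closed disk, and it sits inside an open set of $c$-values on which all the constructions below persist (the relevant estimates in Lemma~\ref{UPrimeK_Contained_ForAllK_Lemma} are strict, so they survive a small perturbation of $c$).

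Next I would check the remaining hypotheses of Theorem~\ref{DH_Mandelbrot_existence_Criterion_Theorem}, all routine. Each $F_c$ is polynomial-like of degree two on $\setU'_k$ by Proposition~\ref{R_polynomial_like_Each_UK_prop}, with unique critical point $c_c = \lvert a\rvert^{1/2n} e^{i 2\pi k / n}$, which does not depend on $c$ and satisfies $R_{n,a,c}(c_c) = v_+$. The domain $\setU'_k$ is independent of $c$, so $\partial \setU'_k$ varies (trivially) analytically with $c$; and $\setU = R_{n,a,c}(\setU'_k)$ is, by Lemma~\ref{uhalfellipse_lemma}, a half-ellipse centered at $c$ (with $\psi=0$ since $a$ is real and positive), so $\partial \setU$ is a fixed curve translated by $c$ and varies analytically with $c$. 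Finally $(c,z) \mapsto z^n + a/z^n + c$ is manifestly analytic in both variables on $\mathbb{C}\times\setU'_k$ because $\setU'_k$ avoids the origin. Lemma~\ref{UPrimeK_Contained_ForAllK_Lemma} gives $\setU'_k \subset \setU$ (with the minor axis of $\ellipse$ meeting $\partial \setU'_k$ at worst, admissible since $\setU'_k$ is open), hence $\partial \setU'_k \subset \setU - \setU'_k$.

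The heart of the argument is the loop condition: for $c \in \partial \setWck$ one needs $v_+ \in \setU - \setU'_k$, and $v_+$ must make a single closed loop around the outside of $\setU'_k$ as $c$ winds once around $\partial \setWck$. Both are immediate from $\setWck = \overline{\setU'_k} - 2\sqrt{a}$: if $c \in \partial \setWck$ then $v_+ = c + 2\sqrt{a} \in \partial \setU'_k \subset \setU - \setU'_k$, and as $c$ traverses the Jordan curve $\partial \setWck$ once, $v_+$ traverses the simple closed curve $\partial \setU'_k$ once, which encloses the critical point $c_c \in \setU'_k$ and lies in $\setU$; so $v_+(c)$ loops once around the outside of $\setU'_k$. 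All hypotheses of Theorem~\ref{DH_Mandelbrot_existence_Criterion_Theorem} are then satisfied, giving that the set of $c \in \setWck$ for which the orbit of $v_+$ never leaves $\setU'_k$ is homeomorphic to $\mandel$.

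The one point requiring care — and what I would be most attentive to — is the relative compactness of $\setU'_k$ in $\setU$: Lemma~\ref{UPrimeK_Contained_ForAllK_Lemma} only guarantees that the minor axis of $\ellipse$ touches $\partial\setU'_k$ at worst, so to get genuine relative compactness one either leans on the openness of $\setU'_k$ exactly as in the $a$-plane argument (cf.\ Proposition~\ref{allnvplusloopW}) or passes to a slightly enlarged $\setU$ (equivalently a slightly shrunk $\setU'_k$) on a neighborhood of $\setWck$; in either case the conclusion of Theorem~\ref{DH_Mandelbrot_existence_Criterion_Theorem} is unchanged.
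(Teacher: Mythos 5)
Your proposal follows essentially the same route as the paper: it verifies the hypotheses of Theorem~\ref{DH_Mandelbrot_existence_Criterion_Theorem} for the family $\{R_{n,a,c}\}_{c\in\setWck}$ using Proposition~\ref{R_polynomial_like_Each_UK_prop} for polynomial-likeness, the obvious analytic dependence on $(c,z)$, and the defining property of $\setWck$ (that $v_+=c+2\sqrt{a}$ traces $\partial\setU'_k$ as $c$ traces $\partial\setWck$) for the loop condition. Your explicit identification $\setWck=\overline{\setU'_k}-2\sqrt{a}$ and your remark on handling the boundary-touching/relative-compactness subtlety are just slightly more careful renderings of the same argument.
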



See Figure \ref{fig:cplaneWthree} for an example. This is part of Main Theorem \ref{Main_Theorem_CPlane}.

\begin{figure}
\centering
\includegraphics[width=1\textwidth,keepaspectratio]{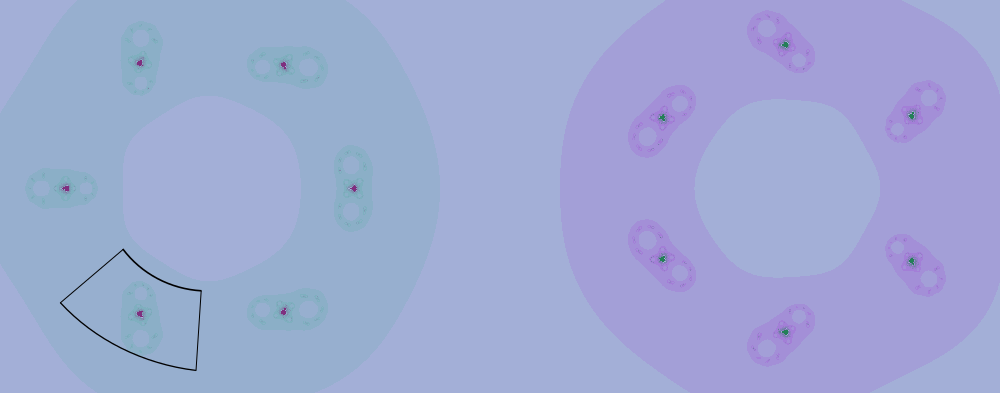}   
\caption{\label{fig:cplaneWthree} The $c$-parameter plane of $R_{n,a,c}$ and $\mathbf{W}_{n,a,k}$ with $n=6$, $a=1$, and $k=4$.}
\end{figure}

\begin{figure}
\centering
\includegraphics[width=.4\textwidth,keepaspectratio]{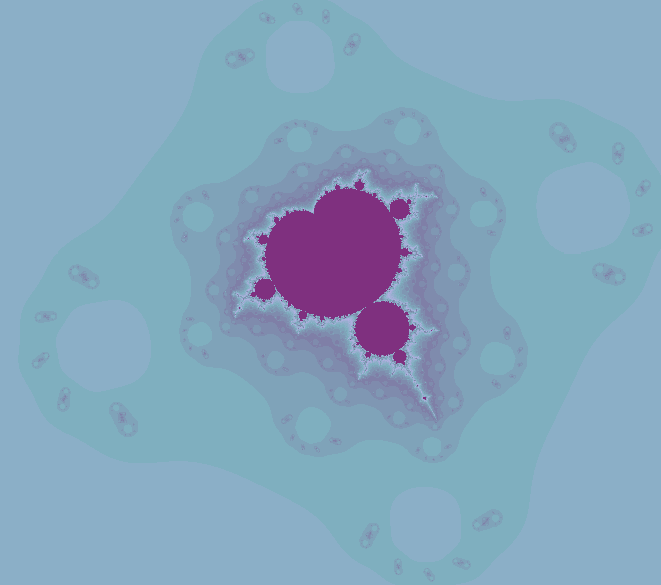}
\caption{\label{Mandel_Zoom_From_W_Figure} A baby $\mandel$ found in a zoom of Figure \ref{fig:cplaneWthree}. }
\end{figure}

Now we find more baby $\mandel$'s, but they're associated with the other critical value $v_-$.  First we set up some more symmetries present in $R_{n,a,c}$, similar to the previous case.

\begin{lemma}
\label{R_Is_Conjugate_over_Real_Axis_Lemma_c_plane}
If $a$ is real, then for every positive integer $m$,
$$
\overline{R_{n,a,\overline{c}}^m(\overline{z})} = R_{n,a,c}^m(z).
$$
\end{lemma}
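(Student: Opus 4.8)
The plan is to prove this by induction on $m$, exploiting the fact that complex conjugation is an anti-holomorphic involution and that $R_{n,a,c}$ has real coefficients when $a$ is real. First I would establish the base case $m=1$: writing out $\overline{R_{n,a,\overline{c}}(\overline{z})}$ directly, we have $\overline{R_{n,a,\overline{c}}(\overline{z})} = \overline{(\overline{z})^n + a/(\overline{z})^n + \overline{c}}$, and since conjugation commutes with sums, products, quotients, and powers, this equals $\overline{(\overline{z})^n} + \overline{a}/\overline{(\overline{z})^n} + \overline{\overline{c}} = z^n + a/z^n + c = R_{n,a,c}(z)$, using $\overline{a} = a$ (as $a$ is real) and $\overline{\overline{c}} = c$. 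The key computational fact is simply $\overline{w^n} = (\overline{w})^n$.

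Next I would carry out the inductive step. Assume $\overline{R_{n,a,\overline{c}}^{m-1}(\overline{z})} = R_{n,a,c}^{m-1}(z)$ for all $z$; I want to conclude the same for $m$. Write $R_{n,a,\overline{c}}^m(\overline{z}) = R_{n,a,\overline{c}}\big(R_{n,a,\overline{c}}^{m-1}(\overline{z})\big)$. The subtlety is that I would like to apply the base-case identity, but with the inner argument $R_{n,a,\overline{c}}^{m-1}(\overline{z})$ in place of $\overline{z}$; to do this cleanly, set $w = \overline{R_{n,a,\overline{c}}^{m-1}(\overline{z})}$, so that $R_{n,a,\overline{c}}^{m-1}(\overline{z}) = \overline{w}$, and by the inductive hypothesis $w = R_{n,a,c}^{m-1}(z)$. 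Then
\begin{eqnarray*}
\overline{R_{n,a,\overline{c}}^m(\overline{z})} &=& \overline{R_{n,a,\overline{c}}\big(\overline{w}\big)}\\
&=& R_{n,a,c}(w) \quad \text{(by the base case applied to } w\text{)}\\
&=& R_{n,a,c}\big(R_{n,a,c}^{m-1}(z)\big)\\
&=& R_{n,a,c}^m(z),
\end{eqnarray*}
which completes the induction.

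I do not anticipate a genuine obstacle here — this is essentially the standard observation that a rational map with real coefficients commutes with complex conjugation, iterated. The only point requiring a little care is bookkeeping in the inductive step: one must apply the base-case identity to the point $w = \overline{R_{n,a,\overline{c}}^{m-1}(\overline{z})}$ rather than naively to $\overline{z}$, and keep straight that the parameter $c$ gets conjugated while $a$ (being real) does not. This lemma is the $c$-plane analogue of Lemma~\ref{symmetry_through_negz_negc}, and it will presumably be combined with the involution symmetry of Lemma~\ref{involution_prop} to transfer the $v_+$ baby $\mandel$'s of Theorem~\ref{v+_Mandels_exist_multiple_cplane} to $v_-$ baby $\mandel$'s, yielding the claimed $2n$ copies when $n$ is odd.
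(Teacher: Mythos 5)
Your proof is correct and follows essentially the same route as the paper: induction on $m$ with the identical base-case computation using $\overline{a}=a$. The only (immaterial) difference is that you split the iterate as $R_{n,a,\overline{c}}\circ R_{n,a,\overline{c}}^{m-1}$ and apply the base case on the outside, while the paper splits it as $R_{n,a,\overline{c}}^{m-1}\circ R_{n,a,\overline{c}}$ and applies the base case on the inside.
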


\begin{proof}
Proving by induction, we first establish the base case:
\begin{eqnarray*}
\overline{R_{n,a,\overline{c}}(\overline{z})} ~=~ \overline{\overline{z}^n + \dfrac{a}{\overline{z}^n} + \overline{c}} ~=~ \overline{\overline{z^n}} + \dfrac{a}{\overline{\overline{z^n}}} + \overline{\overline{c}} ~=~ z^n + \dfrac{a}{z^n} + c ~=~ R_{n,a,c}(z).
\end{eqnarray*} 

Assuming the statement is true for $m-1$, we have:
\begin{eqnarray*}
& ~ & \overline{R_{n,a,\overline{c}}^m(\overline{z})}\\
& = & \overline{R_{n,a,\overline{c}}^{m-1}(R_{n,a,\overline{c}}(\overline{z}))}\\
& = & \overline{R_{n,a,\overline{c}}^{m-1}(\overline{R_{n,a,c}(z)})} \hspace{1cm} \text{(By the base case)}\\
& = & R_{n,a,c}^{m-1}(R_{n,a,c}(z)) \hspace{1cm} \text{(By the induction assumption)}\\
& = & R_{n,a,c}^m(z)
\end{eqnarray*}

and we have established our result by induction.
\end{proof}

Using Lemma \ref{R_Is_Conjugate_over_Real_Axis_Lemma_c_plane} on the critical orbits of $R_{n,a,c}$, we get:

\begin{lemma}
\label{cParameter_Plane_Symmetric_over_realAxis_lemma}
If $a\in \mathbb{R}$, then $\overline{R_{n,a,\overline{c}}^m(v_{\pm})} =R_{n,a,c}^m(v_{\pm})$ for all $m \in \mathbb{N}$.
\end{lemma}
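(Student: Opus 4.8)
The plan is to deduce Lemma~\ref{cParameter_Plane_Symmetric_over_realAxis_lemma} directly from Lemma~\ref{R_Is_Conjugate_over_Real_Axis_Lemma_c_plane} by checking how complex conjugation acts on the critical values $v_\pm = c \pm 2\sqrt{a}$. The key observation is that with $a \in \mathbb{R}$ and $a > 0$, the quantity $2\sqrt{a}$ is a fixed real number, so conjugation simply sends $c \mapsto \overline{c}$ in the expression for $v_\pm$. Concretely, I would first record the identity $\overline{v_\pm(c)} = \overline{c \pm 2\sqrt{a}} = \overline{c} \pm 2\sqrt{a} = v_\pm(\overline{c})$, where $v_\pm(\lambda)$ denotes the critical value as a function of the parameter.

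Next I would apply Lemma~\ref{R_Is_Conjugate_over_Real_Axis_Lemma_c_plane} with the choice $z = v_\pm(\overline{c})$. That lemma gives $\overline{R_{n,a,\overline{c}}^m(\overline{z})} = R_{n,a,c}^m(z)$ for all $m \in \mathbb{N}$; substituting $z = v_\pm(\overline{c})$ and using $\overline{z} = \overline{v_\pm(\overline{c})} = v_\pm(c)$ from the previous step, the left side becomes $\overline{R_{n,a,\overline{c}}^m(v_\pm(c))}$ — wait, I need to be careful about which way the substitution runs. Cleaner: set $z = v_\pm$ (meaning $v_\pm(c) = c \pm 2\sqrt a$). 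Then $\overline{z} = \overline{v_\pm(c)} = \overline{c} \pm 2\sqrt a = v_\pm(\overline c)$, i.e. $\overline z$ is precisely the critical value $v_\pm$ of the parameter $\overline c$. Feeding this into Lemma~\ref{R_Is_Conjugate_over_Real_Axis_Lemma_c_plane} yields $\overline{R_{n,a,\overline{c}}^m(v_\pm)} = R_{n,a,c}^m(v_\pm)$, which is exactly the claimed identity (with the convention that $v_\pm$ inside $R_{n,a,\overline c}^m$ refers to the critical value associated to the parameter $\overline c$). The induction is already done inside Lemma~\ref{R_Is_Conjugate_over_Real_Axis_Lemma_c_plane}, so no new inductive argument is needed.

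There is essentially no obstacle here — the only point requiring the slightest care is bookkeeping: making sure the reader understands that in $R_{n,a,\overline{c}}^m(v_{\pm})$ the symbol $v_\pm$ should be read as the critical value of $R_{n,a,\overline c}$, namely $\overline c \pm 2\sqrt a$, and verifying that this equals $\overline{v_\pm}$ with $v_\pm = c \pm 2\sqrt a$ the critical value of $R_{n,a,c}$. Since $a > 0$ is real, $\sqrt a$ is real and the computation $\overline{c \pm 2\sqrt a} = \overline c \pm 2\sqrt a$ is immediate. The consequence I would emphasize afterward (paralleling the remark following Lemma~\ref{Crit_Orbits_Symm_Through_NegC_Lemma_APlane}) is that the boundedness locus of each critical orbit in the $c$-parameter plane is symmetric under complex conjugation $c \mapsto \overline c$ when $a$ is real, which is what lets us reflect a $v_+$ baby $\mandel$ to obtain a second one and set up the $v_-$ copies in Theorem~\ref{v-_Mandels_exist_multiple_through_symmetry_theorem}.
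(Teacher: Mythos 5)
Your proposal is correct and is essentially the paper's own argument: both reduce the claim to Lemma~\ref{R_Is_Conjugate_over_Real_Axis_Lemma_c_plane} via the observation that, since $\sqrt{a}$ is real, $\overline{c \pm 2\sqrt{a}} = \overline{c} \pm 2\sqrt{a}$, so the conjugate of the critical value for parameter $c$ is the critical value for parameter $\overline{c}$. Your extra remark clarifying how $v_{\pm}$ should be read inside $R_{n,a,\overline{c}}^m$ is exactly the bookkeeping the paper leaves implicit.
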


\begin{proof}
Using Lemma \ref{R_Is_Conjugate_over_Real_Axis_Lemma_c_plane} and the fact that $a$ is real, we get:
\begin{eqnarray*}
\overline{R_{n,a,\overline{c}}^m(v_{\pm})} = \overline{R_{n,a,\overline{c}}^m(\overline{c} \pm 2\sqrt{a})} =  \overline{R_{n,a,\overline{c}}^m(\overline{c \pm 2\sqrt{a}})} 
\\ =  R_{n,a,c}^m(c \pm 2\sqrt{a}) = R_{n,a,c}^m(v_{\pm}).
\end{eqnarray*}
\end{proof}

Lemma \ref{cParameter_Plane_Symmetric_over_realAxis_lemma} yields that the dynamics of the critical orbits above the real axis of the $c$-plane will be the same as the dynamics below. Given these above lemmas we now see that for $n$ odd and $a$ real, the $c$-parameter plane of $R_{n,a,c}$ is symmetric over the real-axis and through the origin.  Combining these two symmetries yields the next Lemma.

\begin{lemma}
\label{cParameter_Plane_Symmetric_over_imaginaryAxis_lemma}
For $n \geq 3$, odd, and $a$ real, the boundedness locus in the $c$-parameter plane is symmetric across the imaginary axis in the $c$-plane.
\end{lemma}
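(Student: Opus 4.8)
The plan is to obtain this symmetry by simply composing the two reflective symmetries of the boundedness locus that have already been established in this subsection. Write $\mathcal{B}\subset\mathbb{C}$ for the boundedness locus in the $c$-plane, i.e. the set of $c$ for which at least one of the two critical orbits $\{R_{n,a,c}^m(v_+)\}_{m}$, $\{R_{n,a,c}^m(v_-)\}_{m}$ is bounded. The statement to prove is exactly that $\mathcal{B}$ is invariant under reflection across the imaginary axis, i.e. under the involution $c\mapsto -\overline{c}$.

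First I would record that $\mathcal{B}$ is invariant under complex conjugation $c\mapsto \overline{c}$. This is immediate from Lemma~\ref{cParameter_Plane_Symmetric_over_realAxis_lemma}: since $a$ is real, $\overline{R_{n,a,\overline{c}}^m(v_{\pm})}=R_{n,a,c}^m(v_{\pm})$, so the orbit of $v_\pm$ under $R_{n,a,\overline c}$ is the complex conjugate of the orbit of $v_\pm$ under $R_{n,a,c}$, and one is bounded precisely when the other is. Hence $\overline{c}\in\mathcal{B}\iff c\in\mathcal{B}$, which is the ``symmetry over the real axis'' already noted in the text preceding the statement.

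Next I would record that, for $n$ odd, $\mathcal{B}$ is invariant under negation $c\mapsto -c$. Here one uses Lemma~\ref{Crit_Orbits_Symm_Through_NegC_Lemma_APlane} (together with Lemma~\ref{symmetry_through_negz_negc}). Note that $v_+(-c)=-c+2\sqrt a=-(c-2\sqrt a)=-v_-(c)$ and $v_-(-c)=-c-2\sqrt a=-v_+(c)$, so the cited lemma gives $R_{n,a,-c}^m(v_+(-c))=-R_{n,a,c}^m(v_-(c))$ and $R_{n,a,-c}^m(v_-(-c))=-R_{n,a,c}^m(v_+(c))$ for all $m$. Thus the unordered pair of critical orbits at $-c$ is, up to the overall sign $-1$ and a swap of the two orbits, the unordered pair of critical orbits at $c$; since boundedness is unaffected by multiplication by $-1$, at least one orbit is bounded at $-c$ iff at least one is bounded at $c$, so $-c\in\mathcal{B}\iff c\in\mathcal{B}$. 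Composing the two involutions $c\mapsto\overline c$ and $c\mapsto -c$ gives that $\mathcal{B}$ is invariant under $c\mapsto -\overline{c}$, which is reflection across the imaginary axis, completing the proof. (The hypothesis $n\geq 3$ merely fixes the range of odd $n$ under consideration; the argument uses only that $n$ is odd and $a$ real.)

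There is no genuinely hard step here — all the analytic content is already packaged in the two cited lemmas, and the argument is the observation that the composition of two symmetries of a set is again a symmetry of that set. The only point needing a moment's care is the bookkeeping in the negation symmetry: $c\mapsto -c$ interchanges the roles of $v_+$ and $v_-$ because $v_+(-c)=-v_-(c)$, so one must phrase the invariance in terms of ``some critical orbit is bounded'' rather than tracking an individual labelled orbit. Since the boundedness locus is by definition the set where at least one critical orbit is bounded, this relabeling is harmless, and this is the step I would state most carefully.
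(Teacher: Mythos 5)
Your proposal is correct and follows essentially the same route as the paper, which likewise obtains the imaginary-axis symmetry by composing the real-axis (conjugation) symmetry of Lemma~\ref{cParameter_Plane_Symmetric_over_realAxis_lemma} with the origin (negation) symmetry coming from Lemmas~\ref{symmetry_through_negz_negc} and~\ref{Crit_Orbits_Symm_Through_NegC_Lemma_APlane}. Your explicit bookkeeping of the swap $v_+\leftrightarrow v_-$ under $c\mapsto -c$ is a welcome precision that the paper leaves implicit (and that it in fact relies on later when pairing $v_+$ and $v_-$ baby copies).
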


We can now finish Main Theorem \ref{Main_Theorem_CPlane}-(i). 


\begin{theorem}
\label{v-_Mandels_exist_multiple_through_symmetry_theorem}
If $n \geq 5$, odd, and $1 \leq a \leq 4$, then for each baby $\mandel$ associated with the critical orbit of $v_+$, there exists a matching baby $\mandel$ associated with the critical orbit of $v_-$.  Each $v_-$ baby $\mandel$ is a reflection of a $v_+$ baby $\mandel$ over the imaginary axis of the $c$-plane.
\end{theorem}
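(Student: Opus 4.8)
The plan is to combine the $v_+$ baby $\mandel$ from Theorem~\ref{v+_Mandels_exist_multiple_cplane} with the reflection symmetry recorded in Lemma~\ref{cParameter_Plane_Symmetric_over_imaginaryAxis_lemma}. Recall that by Theorem~\ref{v+_Mandels_exist_multiple_cplane}, for each fixed $k \in \{0,1,\dots,n-1\}$ the set
\[
\mathcal{B}_k^+ = \left\{ c \in \setWck ~ \middle| ~ \text{the orbit of } v_+ \text{ under } R_{n,a,c} \text{ does not escape } \setU'_k \right\}
\]
is homeomorphic to $\mandel$. The goal is to produce, for each such $\mathcal{B}_k^+$, a matching set $\mathcal{B}^-$ defined by the non-escape of the orbit of $v_-$, and to show it is the mirror image of $\mathcal{B}_k^+$ across the imaginary axis; since reflection is a homeomorphism of $\mathbb{C}$, $\mathcal{B}^-$ is then also homeomorphic to $\mandel$.

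First I would set up the reflection explicitly. Write $\rho(c) = -\overline{c}$ for reflection over the imaginary axis. The key dynamical identity is obtained by chaining the two symmetries already proved: Lemma~\ref{cParameter_Plane_Symmetric_over_realAxis_lemma} gives $\overline{R_{n,a,\overline{c}}^m(v_\pm)} = R_{n,a,c}^m(v_\pm)$ for real $a$, and Lemma~\ref{Crit_Orbits_Symm_Through_NegC_Lemma_APlane} (valid since $n$ is odd) gives $R_{n,a,-c}^m(v_-) = -R_{n,a,c}^m(v_+)$. Composing these, for $c' = -\overline{c}$ one finds $R_{n,a,c'}^m(v_-) = -\,\overline{R_{n,a,c}^m(v_+)}$, so the $v_-$-orbit at the parameter $-\overline{c}$ is precisely the reflection through the origin of the complex conjugate of the $v_+$-orbit at $c$ --- equivalently, it is the reflection of that orbit over the imaginary axis. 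In particular, the $v_-$-orbit at $-\overline{c}$ is bounded, and more sharply stays in a given region, if and only if the $v_+$-orbit at $c$ stays in the reflected region.

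Next I would identify the reflected region. Reflecting $\setU'_k$ over the imaginary axis sends the angular wedge $\frac{4\pi k - \pi}{2n} < \theta < \frac{4\pi k + \pi}{2n}$ to the wedge obtained by $\theta \mapsto \pi - \theta$, which (working modulo $2\pi$) is again one of the $\setU'_j$, say $\setU'_{j(k)}$, because the centers $\frac{2\pi k}{n}$ of the wedges are symmetric as a set under $\theta \mapsto \pi - \theta$ when combined with the full rotational family; the radial range $\frac{|a|^{1/n}}{2} < r < 2$ is reflection-invariant. Correspondingly, the parameter set $\setWck$, defined by $v_+ \in \overline{\setU'_k}$, reflects to the set of $c'$ with $v_- \in \overline{\setU'_{j(k)}}$ (using that $v_\pm$ swap roles under the symmetry of the previous paragraph). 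Define $\mathcal{B}^- = \rho(\mathcal{B}_k^+)$; by the orbit identity this is exactly $\{c' : v_- \text{-orbit under } R_{n,a,c'} \text{ does not escape } \setU'_{j(k)}\}$, which is a bona fide ``$v_-$ baby $\mandel$'' set. Since $\rho$ is a homeomorphism of $\mathbb{C}$ and $\mathcal{B}_k^+ \cong \mandel$, we conclude $\mathcal{B}^- \cong \mandel$, and by construction $\mathcal{B}^-$ is the reflection of $\mathcal{B}_k^+$ over the imaginary axis.

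The main obstacle I anticipate is purely bookkeeping rather than conceptual: one must verify cleanly that the reflected wedge $\setU'_{j(k)}$ really does belong to the family $\{\setU'_j\}_{j=0}^{n-1}$ (this uses $n$ odd in an essential way, just as Lemma~\ref{symmetry_through_negz_negc} does, so that the negation symmetry $z \mapsto -z$ is compatible with the $n$-fold rotational structure), and that the roles of $v_+$ and $v_-$ genuinely interchange under $c \mapsto -\overline{c}$ --- both signs (from conjugation and from negation) must be tracked carefully so that the composite is reflection over the imaginary axis and not, say, over the real axis or through the origin. I would also remark that this argument automatically yields that the $v_-$ copies are distinct from the $v_+$ copies (so that one genuinely gets $2n$ copies in total, as claimed in Main Theorem~\ref{Main_Theorem_CPlane}-(i)) provided the reflected regions $\setU'_{j(k)}$ are not already among those forced to contain $v_+$; this separation should follow from the fact that, for the parameter ranges in play, $v_+$ and $v_-$ cannot simultaneously lie in wedges that are imaginary-axis reflections of each other unless $c$ is close to the imaginary axis, which can be checked directly from $v_\pm = c \pm 2\sqrt{a}$ with $a$ real and positive.
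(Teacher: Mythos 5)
Your argument is correct and is essentially the paper's proof, only written out in more detail: the paper simply cites Theorem~\ref{v+_Mandels_exist_multiple_cplane} together with the imaginary-axis symmetry of Lemma~\ref{cParameter_Plane_Symmetric_over_imaginaryAxis_lemma} (itself the composition of Lemmas~\ref{cParameter_Plane_Symmetric_over_realAxis_lemma} and \ref{Crit_Orbits_Symm_Through_NegC_Lemma_APlane}), which is exactly the identity $R_{n,a,-\overline{c}}^m(v_-) = -\overline{R_{n,a,c}^m(v_+)}$ you derive. One bookkeeping slip: the reflected wedge $\rho(\setU'_k)$ is \emph{not} one of the $\setU'_j$ when $n$ is odd --- its angular center is $\pi - \frac{2\pi k}{n} = \frac{(n-2k)\pi}{n}$, an odd multiple of $\frac{\pi}{n}$, so it is centered at one of the critical points that maps to $v_-$ rather than $v_+$ (which is precisely why it is the natural domain for a $v_-$ baby $\mandel$). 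This does not affect your conclusion, since all you need is that $\rho(c)=-\overline{c}$ is a homeomorphism carrying the non-escape locus of $v_+$ in $\setU'_k$ onto the non-escape locus of $v_-$ in $\rho(\setU'_k)$; the set $\rho(\mathcal{B}_k^+)$ is therefore homeomorphic to $\mandel$ regardless of how the reflected wedge is labeled.
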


\begin{proof}
This is a result of Theorem \ref{v+_Mandels_exist_multiple_cplane} and Lemma \ref{cParameter_Plane_Symmetric_over_imaginaryAxis_lemma}. Therefore there are $2n$ baby $\mandel$'s in the $c$-plane under these restrictions of $n$ and $a$.
\end{proof}

Now we have shown all the black figures we see in Figure \ref{fig:cplaneWthree} are indeed homeomorphic to $\mandel$.

\section{Extending Results: the case of small $a$}
\label{Extend_Results_Section}

Having located $2n$ baby $\mandel$'s, we now look to push the range of fixed parameter values in which they exist, toward smaller values of $a$, approaching the degenerate $a=0$ case.  We shall find baby $\mandel$'s in the $c$-plane for $a$ as small as $\frac{1}{10}$, but this requires increasing the minimum bound on the degree $n$. One could push $a$ even smaller, but then raising $n$ would be necessary. A potential direction for future work would be to study the needed lower bound on $n$ as $a$ decreases to $0$.

\subsection{Dynamical Plane Results}

Smaller values of $a$ force us to decrease our escape radius of $R_{n,a,c}$, as well as further restrict the degree of the rational functions. We shall restrict the argument of $v_+$ to a small interval around $0$, centering our domain around the positive real axis, the same as $\setU'_0$ from before. 

First some notation.

\begin{definition}
Let $ A^*(\infty)$ denote the Basin of Attraction of Infinity, also called the escape locus. That is, $z\in A^*(\infty)$ iff the orbit of $z$ under $R_{n,a,c}$ escapes to $\infty$. 
\end{definition}

We will prove for $\frac{1}{10} \leq \lvert a \rvert \leq 1$ that any point outside of a modulus of 1.25 will escape to $\infty$ under $R_{n,a,c}$ (instead of using 2 as before).

Given this escape radius and $a$ at its maximum, having $\lvert c \rvert \geq 3.25$ will guarantee that $v_+ \in A^*(\infty)$ (i.e. $v_+$ lies outside the escape radius).

\begin{lemma}
\label{TighterEscapeRadius}
If $n \geq 11$, $\lvert c \rvert \leq 3.25$, $\frac{1}{10} \leq a \leq 1$, then any $z$ such that $\lvert z \rvert > \frac{5}{4}$ will lie in $A^*(\infty)$ of $R_{n,a,c}$.
\end{lemma}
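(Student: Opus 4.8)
The plan is to mimic the proof of Lemma~\ref{EscapeRadius_lemma_aplaneCase}, invoking Lemma~\ref{Julia_Set_Restriction_lemma} from \cite{boydschul} with a different choice of $\epsilon$. That lemma guarantees an escape radius of $1+\epsilon$ as soon as the degree $n$ satisfies $(1+\epsilon)^{n} > 3\,\mathrm{Max}\{1,\lvert a\rvert,\lvert c\rvert\}$. Here I want the escape radius to be $\frac{5}{4}$, so I would set $\epsilon = \frac{1}{4}$. Under the stated hypotheses $0.1\le a\le 1$ and $\lvert c\rvert \le 3.25$, the quantity $\mathrm{Max}\{1,\lvert a\rvert,\lvert c\rvert\}$ equals $3.25$, so the condition becomes
\begin{equation*}
\left(\tfrac{5}{4}\right)^{n} > 3(3.25) = 9.75 .
\end{equation*}

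The single computation to check is then that $n\ge 11$ suffices for this inequality. Taking logarithms, the requirement is $n > \dfrac{\ln(9.75)}{\ln(5/4)}$; since $\ln(9.75)\approx 2.277$ and $\ln(5/4)\approx 0.223$, the right-hand side is approximately $10.2$, so indeed every integer $n\ge 11$ works. I would present this estimate explicitly, concluding that for $n\ge 11$ the orbit of any $z$ with $\lvert z\rvert > \frac{5}{4}$ escapes to $\infty$, i.e.\ lies in $A^*(\infty)$ of $R_{n,a,c}$.

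There is no real obstacle here; the only thing to be careful about is that the bound $\lvert c\rvert\le 3.25$ is exactly what keeps $\mathrm{Max}\{1,\lvert a\rvert,\lvert c\rvert\}=3.25$ (rather than something larger), and that the threshold degree computed from $9.75$ and $\frac{5}{4}$ genuinely comes out just below $11$ rather than above it — so the choice of escape radius $\frac{5}{4}$ and the degree bound $n\ge 11$ are tuned to each other. I would note that a smaller escape radius (closer to $1$) would force an even larger minimum degree, which motivates the particular trade-off chosen here. Everything else is a direct quotation of the cited result.
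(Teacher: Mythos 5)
Your argument is exactly the paper's proof: both invoke the escape-radius criterion from \cite{boydschul} with $\epsilon = \tfrac{1}{4}$, compute $3\,\mathrm{Max}\{1,\lvert a\rvert,\lvert c\rvert\} = 9.75$, and check that $n \geq 11 > \ln(9.75)/\ln(5/4) \approx 10.2$ suffices. No gaps; the proposal matches the paper's reasoning step for step.
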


\begin{proof}
Once again using the results in \cite{boydschul}, given any $\epsilon > 0$ and for $n$ sufficiently large, the filled Julia set of $R_{n,a,c}$ is contained in $\mathbb{D} \left( 0,1+\epsilon \right)$. Anything outside the radius of  $1+\epsilon$ escapes to $\infty$.

Similar to Lemmas \ref{EscapeRadius_lemma_aplaneCase} and \ref{EscapeRadius_lemma_cplaneCase}, if $N$ satisfies $(1+\epsilon)^N > 3\text{Max} \lbrace 1,\lvert a \rvert, \lvert c \rvert \rbrace$, then for $n \geq N$ the orbits of values $\lvert z \rvert > 1+\epsilon$ must tend to $\infty$.  Here, we have $\epsilon=0.25$, and $3\text{Max} \lbrace 1,\lvert a \rvert, \lvert c \rvert \rbrace = 9.75$ for $a$ and $c$ at their greatest moduli. So when we solve this equation for $N$, we find $N > \dfrac{\ln(9.75)}{\ln(1.25)} \approx 10.2$, thus $n \geq 11$ will satisfy the criterion.
\end{proof}

Combining this new escape criterion with Lemma \ref{involution_prop} yields that the orbit of any ${\lvert z \rvert < \frac{4}{5}a^{\frac{1}{n}}}$ will tend to $\infty$, thus we get the following lemma.

\begin{lemma}
Under the same hypothesis as Lemma \ref{TighterEscapeRadius}, the filled Julia set of $R_{n,a,c}$ is contained in the annulus $\mathbb{A} \left( \frac{4}{5}a^{\frac{1}{n}}, \frac{5}{4} \right)$.
\end{lemma}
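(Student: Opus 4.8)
The plan is to mirror exactly the structure of Lemmas~\ref{EscapeAnnulus_lemma_aplaneCase} and \ref{EscapeAnnulus_lemma_cplaneCase}, which combined the outer escape radius with the involution symmetry of Lemma~\ref{involution_prop} to produce an annular trap. Here Lemma~\ref{TighterEscapeRadius} already gives that every $z$ with $\lvert z \rvert > \tfrac{5}{4}$ lies in $A^*(\infty)$, so it only remains to handle small moduli. First I would take an arbitrary $z$ with $0 < \lvert z \rvert < \tfrac{4}{5} a^{1/n}$ and apply the involution $h_a(z) = a^{1/n}/z$ from Lemma~\ref{involution_prop}. Since $R_{n,a,c}(h_a(z)) = R_{n,a,c}(z)$, and $\lvert h_a(z) \rvert = a^{1/n}/\lvert z \rvert > a^{1/n} / \bigl(\tfrac{4}{5} a^{1/n}\bigr) = \tfrac{5}{4}$, the point $h_a(z)$ lies outside the escape radius, hence $h_a(z) \in A^*(\infty)$. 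Because $R_{n,a,c}(z) = R_{n,a,c}(h_a(z))$, the orbit of $z$ coincides (from the first iterate onward) with that of $h_a(z)$, so $z \in A^*(\infty)$ as well.

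Consequently, no point with $\lvert z \rvert < \tfrac{4}{5} a^{1/n}$ and no point with $\lvert z \rvert > \tfrac{5}{4}$ can belong to the filled Julia set, which forces the filled Julia set into the closed annulus $\mathbb{A}\bigl(\tfrac{4}{5} a^{1/n}, \tfrac{5}{4}\bigr)$ (reading $\mathbb{A}$ as its closure, consistent with how the earlier annulus lemmas are used). One should note in passing that the hypothesis $0.1 \le a \le 1$ of Lemma~\ref{TighterEscapeRadius} ensures $\tfrac{4}{5} a^{1/n} < \tfrac{5}{4}$, so the annulus is genuinely nonempty; this is immediate since $a^{1/n} \le 1 < \tfrac{25}{16}$.

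There is really no hard step here — the argument is a verbatim adaptation of the earlier annulus lemmas, and the only thing to be slightly careful about is that Lemma~\ref{TighterEscapeRadius} is stated for $z$ with $\lvert z \rvert > \tfrac54$, whereas the involution produces a point with $\lvert z \rvert > \tfrac54$ strictly, so the boundary circle $\lvert z \rvert = \tfrac{4}{5}a^{1/n}$ itself is not controlled by this computation and is therefore (harmlessly) included in the closed annulus. If one wanted the open annulus one could invoke the stated hypothesis $\lvert c \rvert \le 3.25$ together with a direct modulus estimate on $R_{n,a,c}$ at $\lvert z \rvert = \tfrac54$, but for the purposes of locating the filled Julia set the closed annulus suffices and matches the statements of Lemmas~\ref{EscapeAnnulus_lemma_aplaneCase} and \ref{EscapeAnnulus_lemma_cplaneCase}.
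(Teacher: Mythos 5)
Your proposal is correct and takes essentially the same approach as the paper, which likewise obtains the lemma by combining the escape radius of Lemma~\ref{TighterEscapeRadius} with the involution symmetry of Lemma~\ref{involution_prop} (indeed the paper offers only the one-line remark preceding the statement, mirroring Lemma~\ref{EscapeAnnulus_lemma_aplaneCase}). Your aside about the closed versus open annulus is a reasonable observation but does not affect the substance.
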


With this new restriction on the location of the filled Julia set of $R_{n,a,c}$, we define a set $\setUp'$ that takes on the same role as that of $\setU'_0$ from earlier.  Remember that $\Arg(v_+)$ is restricted to a neighborhood of zero. We prove $R_{n,a,c}$ is polynomial-like of degree two on the set
\begin{equation}
\boxed{
\setUp' =\left\{  z=re^{i\theta} \   \middle|  \ \ \dfrac{4}{5}a^{\frac{1}{n}}<~r~<\dfrac{5}{4}~~and~~\dfrac{\psi-\pi}{2n}<~\theta~<\dfrac{\psi+\pi}{2n} \right\},}
\end{equation}
where $\psi = \Arg(a) = 0$ since $a \in \mathbb{R}^+$. We also define $\setUp = R_{n,a,c}(\setUp')$. The critical point $a^{\frac{1}{2n}}$ is contained in this new $\setUp'$ and is mapped to $v_+$. With this change to the inner and outer boundaries of $\setUp'$, the image under $R_{n,a,c}$ is still half an ellipse cut by the minor axis and centered at $c$ but now has 
$$
\text{semi-major~axis~length}: ~ \left( \dfrac{5}{4} \right)^n + {\left( \dfrac{4}{5} \right)^n}\lvert a \rvert
$$
$$ 
\text{semi-minor~axis~length}: ~ \left( \dfrac{5}{4} \right)^n - {\left( \dfrac{4}{5} \right)^n}\lvert a \rvert
$$
We refer to this new ellipse as $\newellipse$ and can use a different representation:

\begin{equation}
\label{New_Ellipse_Equation_Definition}
\boxed{
\newellipse = \left\{ \begin{array}{lcl}
& x= & \left( \left( \dfrac{5}{4} \right)^n + {\left( \dfrac{4}{5} \right)^n}\lvert a \rvert \right)\cos(n\theta) \\
& y= & \left(\left( \dfrac{5}{4} \right)^n - {\left( \dfrac{4}{5} \right)^n}\lvert a \rvert\right)\sin(n\theta)
\end{array} \right\}
}~.
\end{equation}

Similar to before, we must show that $R_{n,a,c}$ is polynomial-like of degree two on $\setUp'$.  First we will show $\setUp'$ is contained in $\newellipse$, and then show further containment of $\setUp'$ inside $\setUp$.
\begin{lemma}
\label{tighter_radius_Uprime_in_ellipse_lemma}
$\setUp' \subset \newellipse$ for $n \geq 11$, $\frac{1}{10} \leq a \leq 1$, and $c$ such that $\lvert v_+ \rvert \leq \frac{5}{4}$.
\end{lemma}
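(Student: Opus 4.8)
The plan is to mirror the proof of Lemma~\ref{u'inellipse} almost verbatim, substituting the new radii $\tfrac{4}{5}a^{1/n}$ and $\tfrac54$ for the old $\tfrac{|a|^{1/n}}{2}$ and $2$, and using the analogue of the second description \eqref{Ellipse_Equation_Second_Definition} of the ellipse. First I would observe that $\setUp'$ sits inside the closed disk $\overline{\mathbb{D}(0,\tfrac54)}$, so it suffices to prove $\overline{\mathbb{D}(0,\tfrac54)}\subset\newellipse$. Next I would record the string--length characterization of $\newellipse$: since the semi-axes are $b=(\tfrac54)^n+(\tfrac45)^n|a|$ and $d=(\tfrac54)^n-(\tfrac45)^n|a|$, the foci are at distance $\sqrt{b^2-d^2}=2\sqrt{|a|}$ from the center $c$ along the major axis, so by Lemma~\ref{Ellipse_Foci_Are_CritValues_Lemma}-type reasoning the foci are again $v_\pm = c\pm 2\sqrt{a}$, and $\newellipse=\{z : |z-v_-|+|z-v_+| = 2b\}$. (I would state this focus computation explicitly since $\newellipse$ was only given parametrically in \eqref{New_Ellipse_Equation_Definition}.)

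Then for any $z$ with $|z|\le \tfrac54$ I would estimate, using $v_\pm = c\pm 2\sqrt a$ and the triangle inequality,
\[
|z-v_-| + |z-v_+| \;\le\; 2|z| + 2|c| + 4\sqrt{|a|} \;\le\; 2\cdot\tfrac54 + 2\cdot\tfrac{13}{4} + 4\sqrt{1} \;=\; \tfrac52 + \tfrac{13}{2} + 4 \;=\; 13,
\]
using $|c|\le\tfrac{13}{4}$ and $|a|\le 1$. It remains to check that $13 \le 2b = 2\big((\tfrac54)^n + (\tfrac45)^n|a|\big)$; since the second term is nonnegative it is enough that $(\tfrac54)^n \ge \tfrac{13}{2}$, i.e. $n\ge \ln(6.5)/\ln(1.25)\approx 8.4$, which holds for $n\ge 11$. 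Hence $|z-v_-|+|z-v_+| \le 2b$, so $z\in\newellipse$, giving $\overline{\mathbb{D}(0,\tfrac54)}\subset\newellipse$ and therefore $\setUp'\subset\newellipse$ under the stated hypotheses.

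The only genuinely new point compared with Lemma~\ref{u'inellipse} is confirming that the foci of $\newellipse$ are still exactly $v_\pm$; everything else is a routine re-run of the same triangle-inequality bound with the tighter radius $\tfrac54$ and the larger allowed $|c|\le\tfrac{13}{4}$, and the degree threshold $n\ge 11$ is precisely what makes $(\tfrac54)^n$ large enough to absorb the constant $13$ (in contrast to the earlier $2^{n+1}\ge 16$ bound, which only needed $n\ge 3$). So the main ``obstacle'' is bookkeeping: making sure the constants $\tfrac54$, $\tfrac45$, $\tfrac{13}{4}$ line up so that $13 \le 2(\tfrac54)^n$ for all $n$ in range, which they do with room to spare.
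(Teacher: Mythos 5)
Your proposal is correct and follows essentially the same route as the paper: reduce to showing $\overline{\mathbb{D}\left(0,\tfrac54\right)}$ lies inside $\newellipse$, use the focal (string-length) description of $\newellipse$ with foci $v_\pm$, and apply the triangle inequality against the major-axis length $2\left(\left(\tfrac54\right)^n+\left(\tfrac45\right)^n\lvert a\rvert\right)$. The only cosmetic differences are in bookkeeping --- you bound $\lvert v_+\rvert+\lvert v_-\rvert$ by $2\lvert c\rvert+4\sqrt{a}\le 13$ where the paper bounds $\lvert v_-\rvert\le\tfrac54+4\sqrt{a}$ to get $5+4\sqrt{a}\le 9$, both comfortably below $2\left(\tfrac54\right)^{11}$ --- and your explicit check that the foci of $\newellipse$ are exactly $v_\pm$ is a worthwhile addition that the paper's proof leaves implicit.
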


\begin{proof}
Similar to the proof of Lemma \ref{Uprime_contained_in_ellipse_afixed_case_lemma}, we shall define the ellipse by its alternate definition:
\begin{equation}
\left\{  z \   \middle| ~ \lvert z-(c-\sqrt{a}) \rvert + \lvert z-(c+\sqrt{a}) \rvert \leq 2\left( \left( \dfrac{5}{4} \right)^n + {\left( \dfrac{4}{5} \right)^n}\lvert a \rvert \right) \right\}.
\end{equation}
Since $\setUp' \subset \mathbb{D}(0,\frac{5}{4})$, showing $\mathbb{D}(0,\frac{5}{4})$ is contained in the ellipse will suffice.  The largest values of $\lvert v_{\pm} \rvert$  will be observed.  We start with $\lvert v_+ \rvert = \frac{5}{4}$ on the outer boundary and find the largest possible $\lvert v_- \rvert$:
\begin{eqnarray*}
\lvert v_+ \rvert &=& \lvert c + 2\sqrt{a} \rvert = \dfrac{5}{4} ~ \Rightarrow ~ c= \dfrac{5}{4}e^{i\theta} - 2\sqrt{a}\\
&\Rightarrow & ~ \lvert v_- \rvert = \lvert c - 2\sqrt{a} \rvert = \left| \dfrac{5}{4}e^{i\theta} - 4\sqrt{a} \right| \\
\end{eqnarray*}
for some $\theta \in [0,2\pi)$. Using derivatives we find the maximum of $\lvert v_- \rvert$ occurs at $\theta=\pi$, so the maximum $\lvert v_- \rvert$ is $\frac{5}{4} + 4\sqrt{a}$, and we have bounds on the foci.

Now we have:
\begin{eqnarray*}
& &\lvert z-(c-\sqrt{a}) \rvert + \lvert z-(c+\sqrt{a}) \rvert\\
& \leq & 2\lvert z \rvert + \lvert c+2\sqrt{a} \rvert + \lvert c-2\sqrt{a} \rvert \\
& \leq & 2\left(\dfrac{5}{4}\right) + \left(\dfrac{5}{4}\right) + \left(\dfrac{5}{4} + 4\sqrt{a}\right)\\
& = & 5 + 4\sqrt{a} \\
& \leq &  5+ 4  \hspace{1cm} (\text{Since}~a \leq 1)\\
& \leq & 2\left( \dfrac{5}{4} \right)^{11} \leq ~ 2\left( \left( \dfrac{5}{4} \right)^n + {\left( \dfrac{4}{5} \right)^n}\lvert a \rvert \right). \\ 
\end{eqnarray*}

Thus the image of any point in $\setUp' \subset \mathbb{D}(0,\frac{5}{4})$ will be contained in this ellipse.

\end{proof}

With $\setUp'$ contained in the ellipse, we just need to show the minor axis of the ellipse does not intersect $\setUp'$.  This will guarantee that $\setUp' \subset \setUp$.

\begin{lemma}
\label{tighter_radius_uprime_in_U}
For $n \geq 11$, $\frac{1}{10} \leq a \leq 1$, and $c$ such that $\lvert v_+ \rvert \leq \frac{5}{4}$, $\setUp' \subset \setUp$.
\end{lemma}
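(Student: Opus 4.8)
The plan is to follow the same pattern as Lemmas~\ref{noaxisintercept} and~\ref{UPrimeK_Contained_ForAllK_Lemma}. By Lemma~\ref{tighter_radius_Uprime_in_ellipse_lemma} we already have $\setUp'\subset\newellipse$, so to conclude $\setUp'\subset\setUp$ it is enough to show that the chord cutting $\newellipse$ into the half-ellipse $\setUp$ --- the minor axis of $\newellipse$ --- does not meet $\setUp'$. Since $a\in\mathbb{R}^{+}$ we have $\psi=\Arg(a)=0$, so $\newellipse$ is an unrotated horizontal ellipse centered at $c$; hence its minor axis lies on the vertical line $\{z:Re(z)=Re(c)\}$, and the half of $\newellipse$ that coincides with $\setUp$ is the one containing the focus $v_{+}=c+2\sqrt a$, i.e.\ the part with $Re(z)\geq Re(c)$. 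So the lemma reduces to showing $Re(z)>Re(c)$ for every $z\in\setUp'$.

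First I would bound $Re(c)$ from above. Writing $c=v_{+}-2\sqrt a$ and using $Re(v_{+})\leq\lvert v_{+}\rvert\leq\tfrac54$ gives $Re(c)\leq\tfrac54-2\sqrt a$. Then I would bound $Re$ from below on $\setUp'$: since $\setUp'=\{re^{i\theta}:\tfrac45 a^{1/n}<r<\tfrac54,\ \lvert\theta\rvert<\tfrac{\pi}{2n}\}$ and $Re(re^{i\theta})=r\cos\theta$ with $\lvert\theta\rvert<\tfrac{\pi}{2n}<\tfrac{\pi}{2}$, the function $r\cos\theta$ attains its minimum over $\overline{\setUp'}$ at the two inner corners $\tfrac45 a^{1/n}e^{\pm i\pi/(2n)}$, so $Re(z)>\tfrac45 a^{1/n}\cos\!\big(\tfrac{\pi}{2n}\big)$ for all $z$ in the open set $\setUp'$.

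Combining the two bounds reduces everything to the single inequality
\[
\frac54-2\sqrt a\ \leq\ \frac45\,a^{1/n}\cos\!\Big(\frac{\pi}{2n}\Big)
\]
for $0.1\leq a\leq 1$ and $n\geq 11$. The left side is decreasing in $a$ and independent of $n$, while the right side is increasing in $a$ and increasing in $n$ (both $a^{1/n}$ and $\cos(\pi/2n)$ grow toward their limits), so it suffices to check the extreme case $a=\tfrac1{10}$, $n=11$: there the left side is $\tfrac54-2\sqrt{0.1}\approx 0.618$ and the right side is $\tfrac45(0.1)^{1/11}\cos(\pi/22)\approx 0.642$, so the inequality holds. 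Hence $Re(c)<Re(z)$ throughout $\setUp'$, and with $\setUp'\subset\newellipse$ this gives $\setUp'\subset\setUp$.

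The main obstacle is that, unlike the $a\geq 1$ situation in Lemma~\ref{UPrimeK_Contained_ForAllK_Lemma} where $Re(c)\leq 0$ makes the containment automatic, here $Re(c)$ can be as large as about $0.618$, so one really does need the sector $\setUp'$ to reach far enough into the right half-plane; the resulting inequality is tight, with a margin of only about $0.02$, so the estimates $Re(v_{+})\leq\lvert v_{+}\rvert$ and the inner-corner bound must be used with essentially no slack. If more room were desired one could also invoke the restriction of $\Arg(v_{+})$ to a small neighborhood of $0$ (the actual parameter window for the $c$-plane argument to follow), but the computation above shows that extra input is not needed for this containment.
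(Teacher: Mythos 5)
Your proposal is correct and follows essentially the same route as the paper: both reduce the containment to showing the minor axis of $\newellipse$ (the vertical line $Re(z)=Re(c)$) lies strictly to the left of $\setUp'$, bound $Re(c)\leq \frac{5}{4}-2\sqrt{a}$, bound the inner arc by $\frac{4}{5}a^{1/n}\cos\left(\frac{\pi}{2n}\right)$, and verify the resulting inequality at the extreme case $n=11$, $a=0.1$ ($\approx 0.618$ versus $\approx 0.642$), using monotonicity in $a$ and $n$ (which the paper checks via a derivative in $n$, you by direct monotonicity) to cover the remaining parameters.
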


\begin{proof}
$\setUp$ is the half of the ellipse that contains $v_+$ which is the right-half in this case. We just have to show the minor axis of the ellipse does not intersect $\setUp'$ and in fact lies to the left to $\setUp'$.

The minor axis is a straight vertical line centered at $c$.  Its horizontal position is $Re(c)$, a value that depends on $v_+$.  At its greatest modulus we observe $v_+$ on the outer arc of $\setUp'$, so $c+2\sqrt{a} = \frac{5}{4}e^{i\theta}$ for $\lvert \theta \rvert \leq \frac{\pi}{2n}$, thus 
$$
Re(c) = \dfrac{5}{4}\cos(\theta) - 2\sqrt{a}~\leq~\dfrac{5}{4} - 2\sqrt{a}.
$$

Now we show the minor axis lies to the left of the inner arc of $\setUp'$. This happens if 
$$
Re(c) < Re \left( \dfrac{4}{5}a^{\frac{1}{n}}e^{i\theta} \right),~i.e.~ \dfrac{5}{4} - 2\sqrt{a} < \dfrac{4}{5}a^{\frac{1}{n}}\cos(\theta).
$$ 
Since $n \geq 11$, $\cos(\theta)$ will be minimal at $\theta = \pm \frac{\pi}{2n}$ and thus the minimum value of $Re \left( \frac{4}{5}a^{\frac{1}{n}}e^{i\theta} \right)$ will be $\frac{4}{5}a^{\frac{1}{n}}\cos\left( \dfrac{\pi}{2n} \right)$ with respect to $\theta$. To minimize further, we take a derivative with respect to $n$ and find 
$$ \dfrac{\partial}{\partial_{n}}\left( \dfrac{4}{5}a^{\frac{1}{n}}\cos\left( \dfrac{\pi}{2n} \right) \right) = \dfrac{2a^{1/n}\left( \pi\sin \left( \dfrac{\pi}{2n} \right) - 2\cos\left( \dfrac{\pi}{2n} \right)\ln(a) \right)}{5n^2} > 0
$$ 
since $a \leq 1$. Therefore the derivative is positive and the horizontal position of the inner arc of $\setUp'$ increases as n increases. As $\frac{4}{5}a^{\frac{1}{n}}\cos\left( \frac{\pi}{2n} \right)$ increases with $a$ as well, the minimum value of this equation occurs at the minimal values of $n=11$ and $a= \frac{1}{10}$. Thus
$$ 
Re \left( \dfrac{4}{5}a^{\frac{1}{n}}e^{i\theta} \right) \geq \dfrac{4}{5}\left(\frac{1}{10}\right)^{\frac{1}{11}}\cos\left( \dfrac{\pi}{22} \right) \approx 0.6423.
$$
At this point the minor axis will be located at $\frac{5}{4}-2\sqrt{\frac{1}{10}} \approx 0.618$, and we have that the minor axis lies to the left of $\setUp'$.  The horizontal position of the minor axis does not depend on $n$, and moves to the left as $a$ increases.  With this, it is assured that the minor axis of the ellipse will never pass through $\setUp'$ and $\setUp' \subset \setUp$.
\end{proof}

\begin{proposition}
\label{R_polynomial_Like_on_tighter_UPrime}
Under the same assumptions as Lemma \ref{tighter_radius_uprime_in_U}, $R_{n,a,c}: \setUp' \rightarrow \setUp$ is a polynomial-like map of degree two.
\end{proposition}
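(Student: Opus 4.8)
The plan is to check the clauses of Definition~\ref{Polynomial-like_definition} one at a time, exactly as in the proofs of Propositions~\ref{R_Polynomial_like_on_first_U'_aplane_prop} and \ref{R_polynomial_like_Each_UK_prop}, now using $\setUp'$ and $\setUp$ in place of $\setU'$ and $\setU$ and invoking the two lemmas just established. First I would record the structural requirements: $\setUp'$ is a radial slice of the annulus $\mathbb{A}\!\left(\tfrac45 a^{1/n},\tfrac54\right)$ of angular width $\pi/n$, hence a bounded, open, simply connected region; and $\setUp=R_{n,a,c}(\setUp')$ is, by the computation preceding Lemma~\ref{tighter_radius_Uprime_in_ellipse_lemma}, half of the ellipse $\newellipse$ cut along its minor axis and recentred at $c$, hence also bounded, open, and simply connected (a topological disk, being the image of the open set $\setUp'$ under the open map $R_{n,a,c}$).

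For relative compactness of $\setUp'$ in $\setUp$ I would combine the two containment lemmas. Lemma~\ref{tighter_radius_Uprime_in_ellipse_lemma} gives $\setUp'\subset\overline{\mathbb{D}(0,\tfrac54)}\subset\newellipse$, and since the estimates there are strict for $n\ge 11$ (e.g. $5+4\sqrt a\le 9<2(5/4)^{11}$), $\overline{\setUp'}$ lies in the \emph{open} filled ellipse. Lemma~\ref{tighter_radius_uprime_in_U} places the minor axis of $\newellipse$ strictly to the left of $\setUp'$, with a gap that does not depend on $n$ in a harmful way (the inner-arc position $\tfrac45 a^{1/n}\cos(\tfrac{\pi}{2n})$ is increasing in $n$, while the minor-axis position $\tfrac54-2\sqrt a$ is independent of $n$; the margin is tightest at $a=\tfrac1{10}$, where it is still $\approx 0.6423>0.618$). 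Hence $\overline{\setUp'}$ is disjoint from the minor axis and from the elliptical boundary arc, so it sits in the interior of the closed right half of the filled ellipse, which is precisely $\setUp$; thus $\overline{\setUp'}\subset\setUp$ and $\setUp'$ is relatively compact in $\setUp$.

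It remains to handle the analytic, proper, degree-two conditions. Since $a\neq 0$ and the inner radius $\tfrac45 a^{1/n}$ of $\setUp'$ is positive, $\setUp'$ avoids the origin and $R_{n,a,c}$ is analytic on $\setUp'$. By design $\setUp'$ is centred at the argument of the critical point $a^{1/2n}$ (here $\psi=\Arg(a)=0$), and since the $2n$ critical points $a^{1/2n}e^{i\pi k/n}$ are spaced $\pi/n$ apart in argument, $\setUp'$ contains exactly one of them. The involution $h_a(z)=a^{1/n}/z$ of Lemma~\ref{involution_prop} carries $\setUp'$ onto itself — it interchanges the radial boundary arcs $r=\tfrac45 a^{1/n}$ and $r=\tfrac54$ and, because $\psi=0$, sends $\theta\mapsto-\theta$, preserving the angular range symmetric about $0$ — and satisfies $R_{n,a,c}\circ h_a=R_{n,a,c}$; so $R_{n,a,c}$ identifies each $z\in\setUp'$ with $h_a(z)$ and is two-to-one except at the critical point, hence proper of degree two onto $\setUp$. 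Collecting these facts, $R_{n,a,c}:\setUp'\to\setUp$ fulfils Definition~\ref{Polynomial-like_definition} and is polynomial-like of degree two.

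The only step requiring genuine care — and the reason the hypotheses were tightened to $n\ge 11$ and $a\ge\tfrac1{10}$ — is relative compactness: one must be certain the separation between the minor axis and the inner arc of $\setUp'$ in Lemma~\ref{tighter_radius_uprime_in_U} stays strictly positive uniformly, rather than degenerating as $n\to\infty$ or as $a$ approaches its endpoints. Everything else is a direct transcription of the earlier polynomial-like arguments.
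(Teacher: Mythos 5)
Your proposal is correct and follows essentially the same route as the paper: analyticity on $\setUp'$ (which avoids the origin), the two-to-one/unique-critical-point structure coming from the involution $h_a$ and the $\pi/n$ spacing of the critical points, and relative compactness $\setUp'\subset\setUp$ supplied by Lemmas~\ref{tighter_radius_Uprime_in_ellipse_lemma} and \ref{tighter_radius_uprime_in_U}. You simply spell out the verification of Definition~\ref{Polynomial-like_definition} in more detail than the paper's brief proof, which is fine.
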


\begin{proof}
$R_{n,a,c}$ is analytic on $\setUp'$ by choice of the boundaries, as well as two-to-one with a single critical point. Thus, $R_{n,a,c}$ satisfies the definition of a polynomial-like map of degree two by Lemma \ref{tighter_radius_uprime_in_U}.
\end{proof}

\subsection{Parameter Plane Results}

Now that we have a family of degree two polynomial-like maps, we define a $\setWp$ in this case by 
\begin{equation}
\boxed{
\setWp = \left\{  c \   \middle|  \ \ \dfrac{4}{5}a^{1/n} \leq \left| v_+ \right| \leq \dfrac{5}{4} ~~ and ~~ \dfrac{-\pi}{2n} \leq \Arg(v_+) \leq \dfrac{\pi}{2n} \right\}}
\label{eqn:defnWp}
\end{equation}
to invoke Theorem \ref{DH_Mandelbrot_existence_Criterion_Theorem} and follow the same criteria to show that a baby $\mandel$ is contained in this $\setWp$.


\begin{theorem}
\label{Tighter_radius_mandel_exists_in_cPlane_Theorem}
For $\frac{1}{10} \leq a \leq 1$ and $n \geq 11$, the set of $c$-values contained in $\setWp$ such that the critical orbit of $v_+$ does not escape $\setUp'$ is homeomorphic to $\mandel$.
\end{theorem}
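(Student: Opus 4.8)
The plan is to verify the hypotheses of Theorem~\ref{DH_Mandelbrot_existence_Criterion_Theorem} for the family $\{R_{n,a,c}\}_{c\in\setWp}$ acting as $R_{n,a,c}\colon \setUp' \to \setUp$, exactly as was done for the earlier $c$-plane result. Proposition~\ref{R_polynomial_Like_on_tighter_UPrime} already supplies the bulk of what is needed: for each $c$ with $\lvert v_+\rvert\le\frac54$ (in particular for every $c\in\setWp$), the map $R_{n,a,c}$ is polynomial-like of degree two on $\setUp'$, with its unique critical point $a^{1/2n}\in\setUp'$. Note that $\setUp'$ does not depend on $c$ at all, so its boundary varies (trivially) analytically with $c$; the boundary of $\setUp=R_{n,a,c}(\setUp')$ varies analytically with $c$ because $R_{n,a,c}$ is a Laurent polynomial in $z$ whose coefficients depend analytically (indeed affinely) on $c$; and $(c,z)\mapsto R_{n,a,c}(z)=z^n+a/z^n+c$ is jointly analytic on $\setUp'\times\mathbb{C}$ since $0\notin\setUp'$. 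Finally, $\setWp$ is a closed curvilinear quadrilateral bounded by two circular arcs and two line segments, hence a closed topological disk with nonempty interior, so it contains a closed disk and can play the role of the set $W$ in Theorem~\ref{DH_Mandelbrot_existence_Criterion_Theorem}.

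It remains to check the winding condition: that $R_{n,a,c}(a^{1/2n})=v_+$ lies in $\setUp-\setUp'$ for every $c\in\partial\setWp$ and makes a single closed loop around the outside of $\setUp'$ as $c$ traverses $\partial\setWp$ once. The key observation is that, since $a$ is a positive real and $v_+=c+2\sqrt a$, the correspondence $c\mapsto v_+$ is merely translation by $2\sqrt a$; comparing the defining inequalities of $\setWp$ with those of $\setUp'$ shows that
\[
\setWp=\overline{\setUp'}-2\sqrt a .
\]
Consequently, as $c$ runs once around $\partial\setWp$ the critical value $v_+$ runs exactly once around $\partial\setUp'$, a Jordan curve enclosing $\setUp'$, so $v_+$ describes a closed loop about the outside of $\setUp'$. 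Moreover $v_+\in\partial\setUp'$, which is disjoint from the open set $\setUp'$; and by the strict inequalities obtained in the proofs of Lemmas~\ref{tighter_radius_Uprime_in_ellipse_lemma} and \ref{tighter_radius_uprime_in_U} (namely $\overline{\setUp'}$ lies strictly inside $\newellipse$ and strictly to the right of its minor axis), $\overline{\setUp'}$ is in fact compactly contained in $\setUp$, so $\partial\setUp'\subset\setUp-\setUp'$ and hence $v_+\in\setUp-\setUp'$ throughout $\partial\setWp$.

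With all the hypotheses of Theorem~\ref{DH_Mandelbrot_existence_Criterion_Theorem} in place, it follows that the set of $c\in\setWp$ for which the orbit of $a^{1/2n}$ — equivalently, the critical orbit of $v_+$ — does not escape $\setUp'$ is homeomorphic to $\mandel$, which is the assertion. The only delicate point, as in the earlier cases, is essentially bookkeeping: confirming the identity $\setWp=\overline{\setUp'}-2\sqrt a$ and upgrading the inclusion $\setUp'\subset\setUp$ of Lemma~\ref{tighter_radius_uprime_in_U} to the compact containment $\overline{\setUp'}\Subset\setUp$ needed so that $v_+$ never touches $\partial\setUp$ while circling; both follow directly from the strict inequalities already established, although here the margins (e.g. the gap between the inner arc of $\setUp'$ and the minor axis of $\newellipse$ near $a=\tfrac1{10}$) are noticeably tighter than in the $\lvert a\rvert\ge 1$ regime, so this is the step that deserves the most care.
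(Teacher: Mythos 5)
Your proposal is correct and follows essentially the same route as the paper: apply Theorem~\ref{DH_Mandelbrot_existence_Criterion_Theorem} to the family $\{R_{n,a,c}\}_{c\in\setWp}$ on $\setUp'\to\setUp$, using Proposition~\ref{R_polynomial_Like_on_tighter_UPrime} for polynomial-likeness and the construction of $\setWp$ for the winding of $v_+$ around $\setUp'$. Your translation identity $\setWp=\overline{\setUp'}-2\sqrt a$ and the remark upgrading $\setUp'\subset\setUp$ to $\overline{\setUp'}\subset\setUp$ simply make explicit what the paper compresses into ``by design of $\setWp$.''
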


\begin{proof}
By design of $\setWp$ it is clear that as $c$ loops around $\partial \setWp$, we have that $v_+$ will make a loop around $\partial \setUp' \subset \setUp - \setUp'$.

$R_{n,a,c}$ is polynomial-like of degree two on $\setUp'$ by Proposition \ref{R_polynomial_Like_on_tighter_UPrime} and thus we have satisfied the criteria of Theorem \ref{DH_Mandelbrot_existence_Criterion_Theorem}. Thus there exists a baby $\mandel$ in $\setWp$.
\end{proof}

We have now extended the interval of $a$-values in which a baby $\mandel$ associated with $v_+$ exists in the $c$-plane, but restricted on the degree of $R_{n,a,c}$. 





Because of the existing symmetries in the $c$-plane, we get a baby $\mandel$ associated with $v_-$ under these criteria on $a$ and $n$ as well.

\begin{corollary}
\label{v-_mandel_exists_in_cPlane_all_aValues_corollary}
For $n \geq 11$, odd, and $\frac{1}{10} \leq a \leq 1$, there exists a baby $\mandel$ associated with $v_-$ lying in the $c$-parameter plane of $R_{n,a,c}$, within the reflection over the imaginary axis of the set $\setWp$.
\end{corollary}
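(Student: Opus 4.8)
The plan is to obtain the desired $v_-$ baby $\mandel$ as the mirror image, across the imaginary axis, of the $v_+$ baby $\mandel$ produced in Corollary~\ref{v+_mandel_exists_in_cPlane_all_aValues_corollary}; this is the small-$a$ analogue of Theorem~\ref{v-_Mandels_exist_multiple_through_symmetry_theorem}. The hypotheses line up: with $n \geq 11$ odd and $a \in [0.1,4] \subset \mathbb{R}$, both the hypotheses of Corollary~\ref{v+_mandel_exists_in_cPlane_all_aValues_corollary} and those of Lemma~\ref{cParameter_Plane_Symmetric_over_imaginaryAxis_lemma} are in force, the oddness of $n$ being precisely what makes the reflection available.

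First I would apply Corollary~\ref{v+_mandel_exists_in_cPlane_all_aValues_corollary} to obtain a set $M_+$, homeomorphic to $\mandel$, consisting of those $c$ for which the orbit of $v_+$ does not escape the relevant region (either $\setUp'$ or $\setU'_0$, according to which of Theorems~\ref{v+_Mandels_exist_multiple_cplane} and~\ref{Tighter_radius_mandel_exists_in_cPlane_Theorem} applies for the given $a$). Next I would recall the structure of Lemma~\ref{cParameter_Plane_Symmetric_over_imaginaryAxis_lemma}: it is obtained by composing the reflection $c \mapsto \overline{c}$ of Lemma~\ref{cParameter_Plane_Symmetric_over_realAxis_lemma}, which preserves $v_+$-boundedness because $\overline{R_{n,a,\overline{c}}^m(v_+)} = R_{n,a,c}^m(v_+)$, with the negation $c \mapsto -c$, which for odd $n$ interchanges $v_+$ and $v_-$ in the sense of Lemma~\ref{Crit_Orbits_Symm_Through_NegC_Lemma_APlane}, namely $R_{n,a,-c}^m(v_-) = -R_{n,a,c}^m(v_+)$. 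Tracking these two maps shows that the involution $\sigma : c \mapsto -\overline{c}$ carries the $v_+$-boundedness locus onto the $v_-$-boundedness locus, and more precisely that the orbit of $v_-$ under $R_{n,a,\sigma(c)}$ stays in the reflected region $\sigma(\setUp')$ (resp. $\sigma(\setU'_0)$) exactly when the orbit of $v_+$ under $R_{n,a,c}$ stays in $\setUp'$ (resp. $\setU'_0$); here one uses that these regions, being centered on the positive real axis with symmetric angular range, are invariant under complex conjugation.

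Consequently $M_- := \sigma(M_+)$ is exactly the set of $c$ for which the $v_-$ orbit does not escape the reflected copy of $\setUp'$ (or $\setU'_0$). Since $\sigma$ is a homeomorphism of $\mathbb{C}$, $M_-$ is homeomorphic to $M_+$, hence to $\mandel$, and it is a $v_-$ baby $\mandel$ sitting in the $c$-parameter plane of $R_{n,a,c}$, as required.

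I do not expect a genuine obstacle: everything substantive was already done in Corollary~\ref{v+_mandel_exists_in_cPlane_all_aValues_corollary} and in the symmetry lemmas. The one point needing care is bookkeeping — confirming that $\sigma$ really does transport the polynomial-like data (the region, the two-to-one behaviour, and the distinguished critical value $v_+$) to the corresponding polynomial-like data for $v_-$, so that the resulting set is a bona fide $v_-$ baby $\mandel$ rather than merely an abstract homeomorphic copy. This is immediate from the two displayed symmetry identities above, exactly as in the proof of Theorem~\ref{v-_Mandels_exist_multiple_through_symmetry_theorem}.
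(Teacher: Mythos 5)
Your proposal is correct and follows the same route as the paper: the paper likewise obtains the $v_-$ baby $\mandel$ by reflecting the $v_+$ baby $\mandel$ of Corollary~\ref{v+_mandel_exists_in_cPlane_all_aValues_corollary} across the imaginary axis via the symmetry of Lemma~\ref{cParameter_Plane_Symmetric_over_imaginaryAxis_lemma}. Your unpacking of that symmetry as the involution $c \mapsto -\overline{c}$, built from Lemmas~\ref{cParameter_Plane_Symmetric_over_realAxis_lemma} and~\ref{Crit_Orbits_Symm_Through_NegC_Lemma_APlane}, is exactly the derivation the paper uses to justify that lemma, just spelled out in more detail.
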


\begin{proof}
This follows from the existence of a baby $\mandel$ associated with $v_+$ in 
Theorem~\ref{Tighter_radius_mandel_exists_in_cPlane_Theorem}, 
as well as the symmetry given in Lemma \ref{cParameter_Plane_Symmetric_over_imaginaryAxis_lemma}.  Therefore the baby $\mandel$ associated with $v_-$ is a reflection of the baby $\mandel$ associated with $v_+$ from 
Theorem~\ref{Tighter_radius_mandel_exists_in_cPlane_Theorem}, over the imaginary axis of the $c$-parameter plane of $R_{n,a,c}$.
\end{proof}

In Figure \ref{CPlane_n11_a022_With_Zoom_2Mandels_Figure} we can see an example of these baby $\mandel$'s existing under these new criteria.  A zoom in is necessary as the baby $\mandel$'s shrink as $n$ grows.  The green baby $\mandel$ represents the orbit of $v_-$ and the purple baby $\mandel$ represents the orbit of $v_+$.

\begin{figure}
\centering
\includegraphics[scale=0.7]{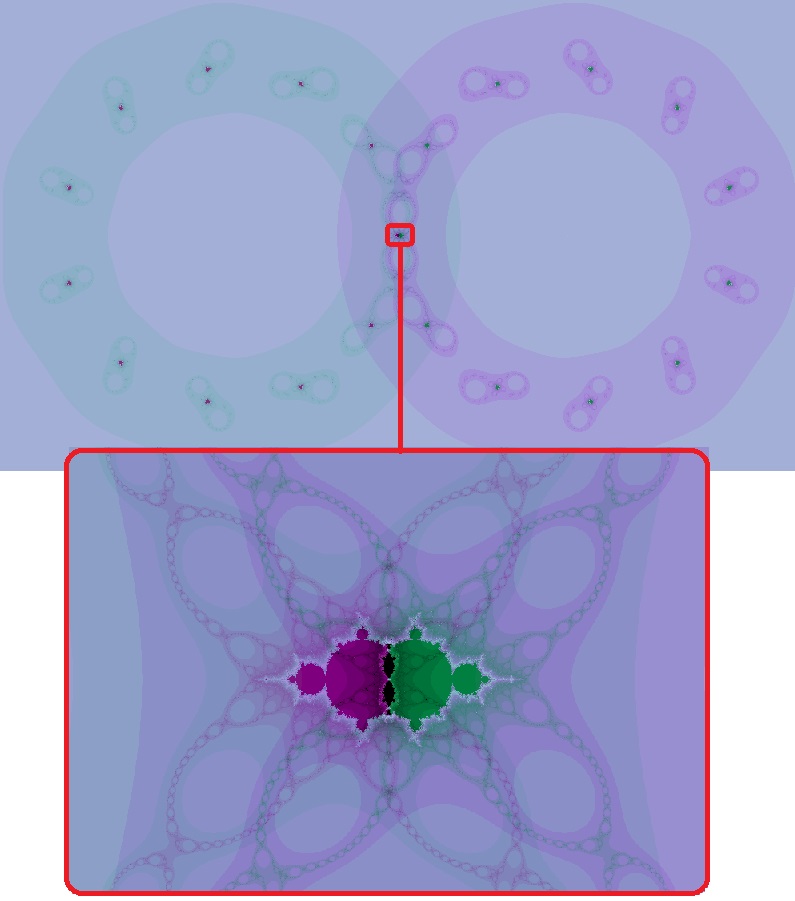}
\caption{\label{CPlane_n11_a022_With_Zoom_2Mandels_Figure} The $c$-parameter plane of $R_{n,a,c}$ for $n=11$ and $a=0.22$, and a zoom into the origin.  The green $\mandel$ is associated with $v_-$ and the purple $\mandel$ is associated with $v_+$.}
\end{figure}

\subsection{Mandelbrots Passing through each other}

Now we will take a closer look at two baby $\mandel$'s which intersect in the $c$-plane and pass through one another along a line of $a$ values.

We define the center of a baby $\mandel$ in the $c$-parameter plane to be the $c$-value for which the critical point is the same as the critical value; i.e., the critical point is a fixed point of $R_{n,a,c}$.  If we wish to find the center of a baby $\mandel$ associated with $v_+$, then we are solving the equation $a^{1/2n} = c + 2\sqrt{a}$ for $c$, which is 
$$
c_+ = a^{1/2n} - 2\sqrt{a}.
$$
By Lemma \ref{cParameter_Plane_Symmetric_over_imaginaryAxis_lemma},  the baby $\mandel$ associated with $v_-$ is just a reflection over the imaginary axis of the $c$-parameter plane.  We reflect $c_+$ over the imaginary axis to find the center of the baby $\mandel$ associated with $v_-$ to be
$$
c_- = 2\sqrt{a} - a^{1/2n}.
$$
Using this, we can give an exact case when the two baby $\mandel$'s overlap.

\begin{proposition}
\label{Mandels_With_Same_Center_Proposition}
For $n \geq 11$, odd, and $a = \left( \frac{1}{4} \right)^{\frac{n}{n-1}}$, two baby $\mandel$'s in the $c$-parameter plane associated with $v_+$ and $v_-$ respectively overlap and have the same center.  Further, this same center is the origin of the $c$-plane.
\end{proposition}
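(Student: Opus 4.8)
The plan is to exploit the fact that the two center formulas recorded just above are negatives of one another: $c_- = 2\sqrt a - a^{1/2n} = -\bigl(a^{1/2n}-2\sqrt a\bigr) = -c_+$. Hence the two centers coincide if and only if $c_+ = 0$, and in that event the common center is the origin of the $c$-plane — which is exactly what the statement asserts. So the whole proposition reduces to solving $c_+ = 0$. Setting $a^{1/2n} = 2\sqrt a$ and dividing, $a^{\frac{1}{2n}-\frac12} = 2$, i.e. $a^{\frac{1-n}{2n}} = 2$, so $a = 2^{\frac{2n}{1-n}} = \bigl(\tfrac14\bigr)^{\frac{n}{n-1}}$, precisely the value in the hypothesis. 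First I would record this one-line computation.

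Next I would check that this prescribed $a = (1/4)^{n/(n-1)}$ lies in the parameter window where the two baby $\mandel$'s are known to exist. Since $\tfrac{n}{n-1}$ is decreasing in $n$ with limit $1$, and $1/4<1$, the quantity $(1/4)^{n/(n-1)}$ is increasing in $n$ toward $1/4$; thus for all odd $n\ge 11$ it lies between $(1/4)^{11/10}\approx 0.2176$ and $1/4$, comfortably inside $[0.1,1]$. Therefore Theorem~\ref{Tighter_radius_mandel_exists_in_cPlane_Theorem} (for $v_+$) and Corollary~\ref{v-_mandel_exists_in_cPlane_all_aValues_corollary} (for $v_-$) both apply. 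I would then verify $c=0\in\setWp$: at $c=0$ and this $a$, the relation $c_+=0$ gives $v_+ = 2\sqrt a = a^{1/2n}$, which is real and positive, so $\Arg(v_+)=0\in[-\tfrac{\pi}{2n},\tfrac{\pi}{2n}]$, while $0<a<1$ forces $\tfrac45 a^{1/n} < a^{1/2n} < \tfrac54$, so the modulus condition also holds. Hence $c=0$ is the parameter of $\setWp$ at which the critical point $a^{1/2n}$ of $R_{n,a,c}$ is fixed, i.e. it is the center of the $v_+$ baby $\mandel$ from Theorem~\ref{Tighter_radius_mandel_exists_in_cPlane_Theorem}; by the imaginary-axis symmetry of Lemma~\ref{cParameter_Plane_Symmetric_over_imaginaryAxis_lemma}, its reflection (again $0$) is the center of the $v_-$ baby $\mandel$.

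Finally, to conclude that the two copies actually overlap and not merely share an abstract ``center'', I would note that the center of a baby $\mandel$ is a genuine point of that baby $\mandel$ — under the Douady--Hubbard homeomorphism it corresponds to $0\in\mandel$, which belongs to $\mandel$ — so the two baby $\mandel$'s both contain the point $c=0$ and therefore intersect, with the common point being the origin. There is no real obstacle here: the content is the clean algebraic coincidence $c_+=-c_-$ together with the observation that $c_+=0$ is solved by exactly the stated $a$; the only point requiring a moment's care is translating ``same center'' into a genuine set intersection, which is handled by the remark that the center lies in the baby $\mandel$.
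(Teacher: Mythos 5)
Your proposal is correct and rests on essentially the same computation as the paper: the identity $c_-=-c_+$ reduces everything to solving $2\sqrt{a}=a^{1/2n}$, which gives exactly $a=\left(\tfrac14\right)^{\frac{n}{n-1}}$ and forces the common center to be the origin. Your additional checks --- that this $a$ lies in $[0.1,1]$ so both baby $\mandel$'s exist, that $c=0\in\setWp$, and that the center (where the critical point $a^{1/2n}$ is fixed, hence non-escaping) is genuinely a point of each baby $\mandel$, so the two sets truly intersect at the origin --- are details the paper leaves implicit, so they strengthen rather than depart from its argument.
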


\begin{proof}
We first solve the equation $c_+ = c_-:$ 
$$
 a^{1/2n} - 2\sqrt{a} = 2\sqrt{a} - a^{1/2n} \Rightarrow 2\sqrt{a} = a^{1/2n} \Rightarrow a = 2^{\frac{1}{\frac{1}{2n} - \frac{1}{2}}} = \left( \dfrac{1}{4} \right)^{\dfrac{n}{n-1}}
$$
and find when we plug this value into $c_+$:
\begin{eqnarray*}
&~& \left(\left( \dfrac{1}{4} \right)^{\dfrac{n}{n-1}} \right)^{1/2n}-2\sqrt{\left( \dfrac{1}{4} \right)^{\dfrac{n}{n-1}}}\\ 
&=& \left( \dfrac{1}{2} \right)^{\dfrac{1}{n-1}}
-\left(\dfrac{1}{2} \right)^{\dfrac{n}{n-1}-1}\\
&=& \left( \dfrac{1}{2} \right)^{\dfrac{1}{n-1}} - \left(\dfrac{1}{2} \right)^{\dfrac{1}{n-1}}\\
&=& 0.
\end{eqnarray*}
\end{proof}

For any odd $n \geq 11$ we are thus given an $a$-value for which two baby $\mandel$'s of opposite critical orbits will intersect at the origin and have the same center (see Figure \ref{Overlapping_Mandels_Same_Center_n11} for an example).  Knowing that these baby $\mandel$'s intersect allows us to find an interval of $a$-values, such that as $a$ varies continuously from one end to the other, the baby $\mandel$ sets will pass completely through each other.

\begin{figure}
\centering
\includegraphics[width=0.75\textwidth,keepaspectratio]{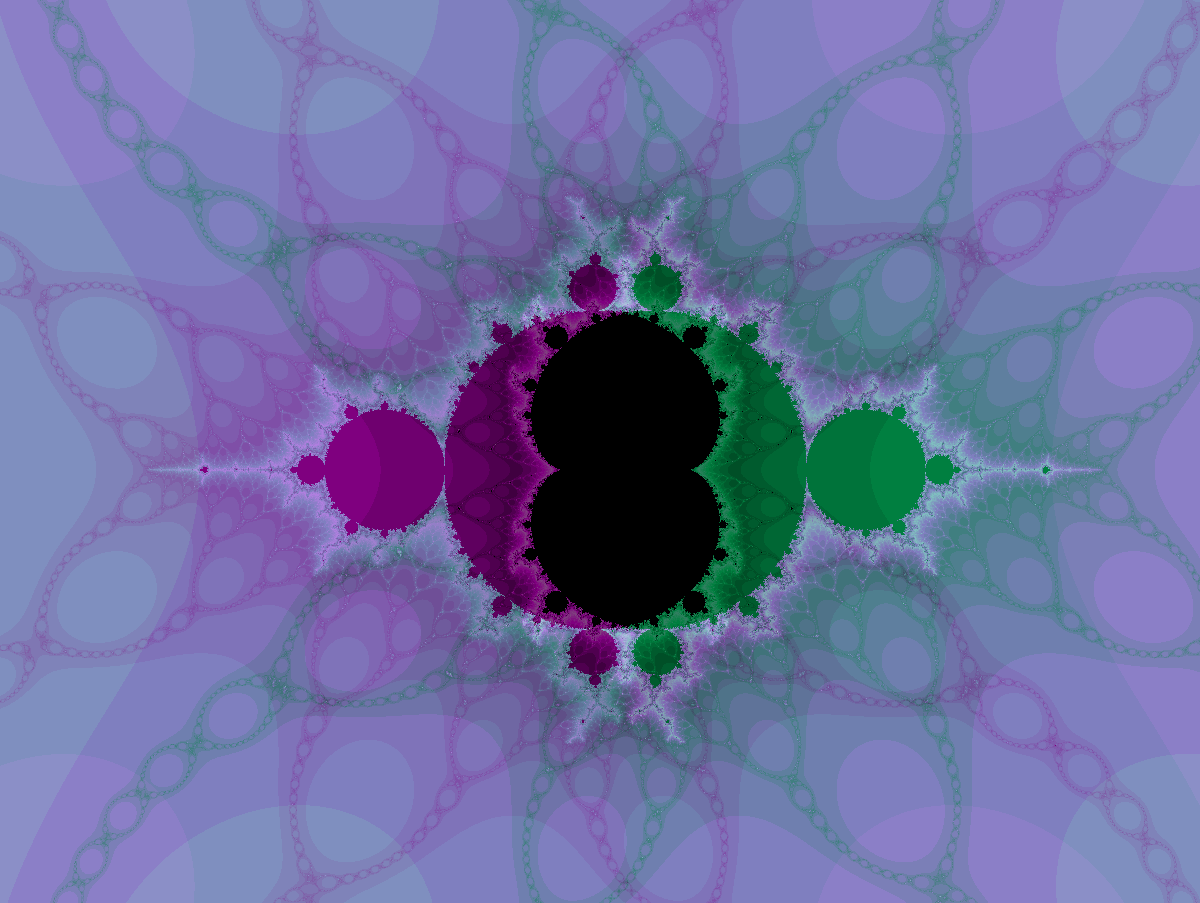}
\caption{\label{Overlapping_Mandels_Same_Center_n11} A zoom in on the $c$-parameter plane of $R_{n,a,c}$ for $n=11$ and $a=0.2176$, here two baby $\mandel$'s are both centered at the origin.}
\end{figure}

\begin{lemma}
\label{Mandels_intersect_and_lie_completely_on_real_axis_lemma}
For any odd $n \geq 11$, and $\frac{1}{10} \leq a \leq 4$, two baby $\mandel$'s associated with $v_-$ and $v_+$ are centered at the real axis of the $c$-plane, and move along it continuously as $a$ varies continuously.
\end{lemma}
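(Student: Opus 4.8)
The plan is to deduce the statement directly from the explicit formulas for the centers obtained just before Proposition~\ref{Mandels_With_Same_Center_Proposition}: the center of the baby $\mandel$ associated with $v_+$ is $c_+ = a^{1/2n} - 2\sqrt{a}$, and that of the baby $\mandel$ associated with $v_-$ is $c_- = 2\sqrt{a} - a^{1/2n} = -c_+$. For odd $n \geq 11$ and $0.1 \leq a \leq 4$, both of these baby $\mandel$'s are already known to exist --- the $v_+$ copy by Corollary~\ref{v+_mandel_exists_in_cPlane_all_aValues_corollary}, and the $v_-$ copy, its reflection across the imaginary axis, by Corollary~\ref{v-_mandel_exists_in_cPlane_all_aValues_corollary} together with Lemma~\ref{cParameter_Plane_Symmetric_over_imaginaryAxis_lemma} --- so the content of the lemma is entirely a matter of understanding $c_\pm$ as functions of the real parameter $a$.

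First I would observe that when $a$ is real and positive, both $a^{1/2n}$ and $\sqrt{a}$ are real and positive, so $c_+ \in \mathbb{R}$ and hence $c_- = -c_+ \in \mathbb{R}$ as well; thus both centers lie on the real axis of the $c$-plane for every $a \in [0.1,4]$. Next I would check that $a \mapsto c_+(a)$ is continuous (clear) and in fact strictly decreasing on $[0.1,4]$. Its derivative is
\[
\frac{d c_+}{da} \;=\; \frac{1}{2n}\,a^{\frac{1}{2n}-1} - \frac{1}{\sqrt{a}},
\]
which is negative exactly when $\tfrac{1}{2n} < a^{\frac12 - \frac1{2n}}$. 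For $a \geq 1$ the right side is at least $1$; for $0.1 \leq a < 1$ it exceeds $\sqrt{a} \geq \sqrt{0.1}$; either way it is larger than $\tfrac{1}{22} \geq \tfrac{1}{2n}$ for all $n \geq 11$, so the derivative is strictly negative throughout. Consequently $c_+$ decreases monotonically from the small positive value $c_+(0.1)$, passing through $0$ at $a = (1/4)^{n/(n-1)}$ (this is exactly Proposition~\ref{Mandels_With_Same_Center_Proposition}) and continuing down to $c_+(4) = 4^{1/2n} - 4 < 0$, while $c_- = -c_+$ traces the mirror-image interval. In particular each center sweeps out a sub-interval of the real axis as $a$ runs over $[0.1,4]$, and that is the assertion.

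I do not expect a genuine obstacle: once the center formulas are in hand, the lemma is just the sign computation above plus the observation that $c_\pm$ are real. The only points needing a little care are making the inequality $\tfrac1{2n} < a^{1/2-1/(2n)}$ uniform over all odd $n \geq 11$ (rather than verifying a single degree), and noting that the crossing value $(1/4)^{n/(n-1)}$ from Proposition~\ref{Mandels_With_Same_Center_Proposition} lies between roughly $0.217$ and $0.25$ for odd $n \geq 11$, hence inside $[0.1,4]$; this last fact is what makes the monotone paths of $c_+$ and $c_-$ actually cross within the admissible range and sets up the ``passing through each other'' discussion that follows.
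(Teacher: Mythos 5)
Your calculus with the explicit centers is correct and in fact more quantitative than what the paper records: for real $a>0$ both $c_+=a^{1/2n}-2\sqrt{a}$ and $c_-=-c_+$ are real, your uniform inequality $\tfrac{1}{2n}<a^{1/2-1/(2n)}$ does make $c_+$ strictly decreasing on $[0.1,4]$ for every $n\geq 11$, and the crossing value $(1/4)^{n/(n-1)}$ of Proposition~\ref{Mandels_With_Same_Center_Proposition} indeed lies in the admissible range. None of this monotonicity appears in the paper, whose proof is purely qualitative.

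However, what you have proved is a statement about the two center \emph{points}, whereas the lemma -- as the paper proves it and as it is used in the following proposition -- is a statement about the baby $\mandel$'s as \emph{sets}. The paper's proof has two ingredients you do not supply: first, for real $a$ the conjugation symmetry of the critical orbits (Lemma~\ref{cParameter_Plane_Symmetric_over_realAxis_lemma}) shows each baby $\mandel$ is invariant under $c\mapsto\overline{c}$ (the parameter regions such as $\setWp$ and the escape condition defining the baby are conjugation-invariant, so each baby is carried to itself), i.e.\ the real axis cuts each baby in half; second, since $\mandel$ has only one axis of mirror symmetry, that axis of each baby must lie along the real axis, so as $a$ varies the entire set, not merely its center, remains bisected by the real axis and can only move along it. Your argument guarantees only that each baby always contains one real point (its center), which in principle leaves the rest of the set free to sit lopsidedly off the axis; but the set-level anchoring is exactly what the next proposition appeals to (``they never went away from the real axis'') to conclude the two babies pass completely through each other. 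The fix is easy: keep your monotonicity computation as a strengthening, and add the conjugation-symmetry step to get the symmetry of the sets about the real axis.
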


\begin{proof}
In Lemma \ref{cParameter_Plane_Symmetric_over_realAxis_lemma}, we showed these baby $\mandel$'s are symmetric about the real axis when $a$ is positive and real. Thus, as $a$ varies along the real axis, the baby $\mandel$'s vary, keeping their center and only axis of symmetry in the real axis. 
\end{proof}

Knowing that the centers of these two baby $\mandel$'s stay on the real axis as they change position, we can prove that the two sets actually pass completely through one another as $a$ changes continuously in the range of interest.

\begin{proposition}
Letting $n \geq 11$ and odd, as $a$ increases from $\frac{1}{10}$ to $1$, two baby $\mandel$'s associated with $v_-$ and $v_+$ move along the real axis of the $c$-parameter plane of $R_{n,a,c}$ in opposite directions and completely pass through each other.
\end{proposition}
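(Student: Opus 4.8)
The plan is to track the two centers explicitly and then upgrade statements about centers to statements about the sets. Write $c_+(a) = a^{1/2n} - 2\sqrt{a}$ and $c_-(a) = 2\sqrt{a} - a^{1/2n} = -c_+(a)$, the reflection of $c_+(a)$ across the imaginary axis (both being real); by Lemma~\ref{Mandels_intersect_and_lie_completely_on_real_axis_lemma} these are the centers of the two baby $\mandel$'s, which stay bisected by the real axis. I want to show (i) $c_+$ is strictly decreasing on $[0.1,1]$, so the two centers travel along the real axis in opposite directions, and (ii) the two baby $\mandel$'s themselves, not merely their centers, begin disjoint with the $v_+$-copy lying to the right of the $v_-$-copy and end disjoint with it lying to the left, so that they sweep completely past one another.

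First I would handle the monotonicity. Differentiating, $c_+'(a) = \tfrac{1}{2n}\,a^{\frac{1}{2n}-1} - a^{-\frac12}$, and this is negative on $[0.1,1]$ for every $n \ge 11$, since $\tfrac{1}{2n} \le \tfrac{1}{22} < \sqrt{0.1} \le a^{1/2} \le a^{1/2 - 1/2n}$ rearranges to $\tfrac{1}{2n}\,a^{\frac{1}{2n}-1} < a^{-1/2}$. Hence $c_+$ strictly decreases and $c_- = -c_+$ strictly increases as $a$ runs from $0.1$ to $1$. At the endpoints, $c_+(0.1) = (0.1)^{1/2n} - 2\sqrt{0.1} \ge (0.1)^{1/22} - 2\sqrt{0.1} > 0$ (the first inequality because $(0.1)^{1/2n}$ increases in $n$), whereas $c_+(1) = 1 - 2 = -1 < 0$: so as $a$ grows the center of the $v_+$-copy crosses from the open right half-plane into the open left half-plane, and the center of the $v_-$-copy crosses the other way.

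Next I would promote this from centers to sets. Each baby $\mandel$ associated with $v_+$ lies in $\setWp$, and $c \in \setWp$ precisely when $v_+ = c + 2\sqrt{a}$ lies in the closed wedge-annulus $\overline{\setUp'}$ (here $\psi = \Arg(a) = 0$). Since the real part of a point of $\overline{\setUp'}$ ranges over $\bigl[\tfrac{4}{5}a^{1/n}\cos\tfrac{\pi}{2n},\ \tfrac{5}{4}\bigr]$, every $c$ in the $v_+$-copy has $\mathrm{Re}(c) \in \bigl[\tfrac{4}{5}a^{1/n}\cos\tfrac{\pi}{2n} - 2\sqrt{a},\ \tfrac{5}{4} - 2\sqrt{a}\bigr]$, and the $v_-$-copy, being its reflection across the imaginary axis, has real parts in the reflected interval. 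At $a = 0.1$ the left endpoint $\tfrac{4}{5}(0.1)^{1/n}\cos\tfrac{\pi}{2n} - 2\sqrt{0.1}$ is positive for every $n \ge 11$: the quantity $\tfrac{4}{5}a^{1/n}\cos\tfrac{\pi}{2n}$ is increasing in $n$ by the derivative computation in the proof of Lemma~\ref{tighter_radius_uprime_in_U}, and already exceeds $2\sqrt{0.1} \approx 0.632$ at $n = 11$. Thus at $a = 0.1$ the $v_+$-copy lies in the open right half-plane and the $v_-$-copy in the open left half-plane. At $a = 1$ the right endpoint equals $\tfrac{5}{4} - 2 = -\tfrac{3}{4} < 0$, so now the $v_+$-copy lies in the open left half-plane and the $v_-$-copy in the open right half-plane. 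Hence at the two ends of the interval the two copies are disjoint and oppositely ordered; they move continuously and monotonically in opposite directions in between, and in fact share a common center (so overlap) at the interior value $a = (1/4)^{n/(n-1)} \in [0.1,1]$ by Proposition~\ref{Mandels_With_Same_Center_Proposition}. Therefore they pass completely through each other as $a$ increases from $0.1$ to $1$.

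The step I expect to be most delicate is the endpoint estimate at $a = 0.1$, where the $v_+$-copy clears the imaginary axis only by a margin of roughly $0.01$ when $n = 11$; one must use the monotonicity in $n$ of $\tfrac{4}{5}a^{1/n}\cos\tfrac{\pi}{2n}$ (Lemma~\ref{tighter_radius_uprime_in_U}) to guarantee the separation for every admissible degree, not just numerically at the smallest one. Everything else is routine manipulation of the closed forms for $c_\pm$ together with the containment of each baby $\mandel$ inside its defining set $\setWp$.
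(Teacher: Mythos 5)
Your proposal is correct and follows essentially the same route as the paper: confine the real parts of each baby $\mandel$ to an interval determined by $\setWp$ (and its reflection), check the two half-plane locations at $a=0.1$ and $a=1$, and invoke Proposition~\ref{Mandels_With_Same_Center_Proposition} together with Lemma~\ref{Mandels_intersect_and_lie_completely_on_real_axis_lemma} to conclude the copies sweep past each other along the real axis. Your version is in fact slightly more careful than the paper's in two small ways — you keep the factor $\cos\frac{\pi}{2n}$ in the lower endpoint (the paper drops it, which technically shrinks the containing interval) and you verify monotonicity of the centers $c_\pm(a)$ explicitly — but these are refinements of the same argument, not a different one.
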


\begin{proof}
By Theorem \ref{Tighter_radius_mandel_exists_in_cPlane_Theorem} the baby $\mandel$ associated with $v_+$ lies in $\setWp$ with the tighter radius, this means that
$$
\dfrac{4}{5}a^{1/n} \leq \lvert c + 2\sqrt{a} \rvert \leq \dfrac{5}{4} ~ \Rightarrow ~ Re(c) \in \left[\dfrac{4}{5}a^{1/n} - 2\sqrt{a}~,~\dfrac{5}{4}-2\sqrt{a}\right].
$$
We refer to this interval as $\mathbf{I}_1$ and call the respective endpoints $\omega_1$ and $\omega_2$.  So, any real $c$ value in this baby $\mandel$ must lie in $\mathbf{I}_1$.  Because the other baby $\mandel$ associated with $v_-$ is a reflection of the first over the imaginary axis, the real values that lie in this baby $\mandel$ are just a reflection of $\mathbf{I}_1$ across the imaginary axis and so the real values of the baby $\mandel$ associated with $v_-$ lie in
$$
\mathbf{I}_2 = \left[-\omega_2~,-\omega_1 \right] = \left[2\sqrt{a} - \dfrac{5}{4}~,~2\sqrt{a} - \dfrac{4}{5}a^{1/n} \right].
$$

Now both of these intervals are well defined as long as $a < \left(\frac{25}{16} \right)^n$ which is true here as $a \leq 1$. Now as we start at $a=\frac{1}{10}$ and $n=11$, $\omega_1 > 0$ and increases with $n$ so $-\omega_1 < \omega_1$ for $a=\frac{1}{10}$ and all $n \geq 11$ and therefore $\mathbf{I}_2$ lies to the left of $\mathbf{I}_1$.  As $a$ increases $\omega_1$ and $\omega_2$ decrease in value, which conversely means that $-\omega_1$ and $-\omega_2$ increase.  Therefore as $a$ increases, $\mathbf{I}_1$ will move to the left as $\mathbf{I}_2$ moves to the right.

Now at $a=1$, $\omega_2 = -\frac{3}{4}$ which is less than $-\omega_2$.  Since $\omega_2$ does not depend on $n$, then for all $n \geq 11$ and $a=1$, $\mathbf{I}_1$ lies to the left of $\mathbf{I}_2$.  Therefore the two intervals have passed through each other as they are both intervals of real values.

We know that they have to intersect at at least one point since Proposition \ref{Mandels_With_Same_Center_Proposition} gives an $a$ value for which both baby $\mandel$'s have the same center.  Lemma \ref{Mandels_intersect_and_lie_completely_on_real_axis_lemma} shows us that the two baby $\mandel$'s had to have passed through each other since their centers never left the real axis.
\end{proof}

Figure \ref{Phases_of_mandels_intersecting_figure} illustrates some different phases of the baby $\mandel$'s passing through each other in $c$-plane slices.

\begin{figure}
\centering
\includegraphics[width=0.95\textwidth,keepaspectratio]{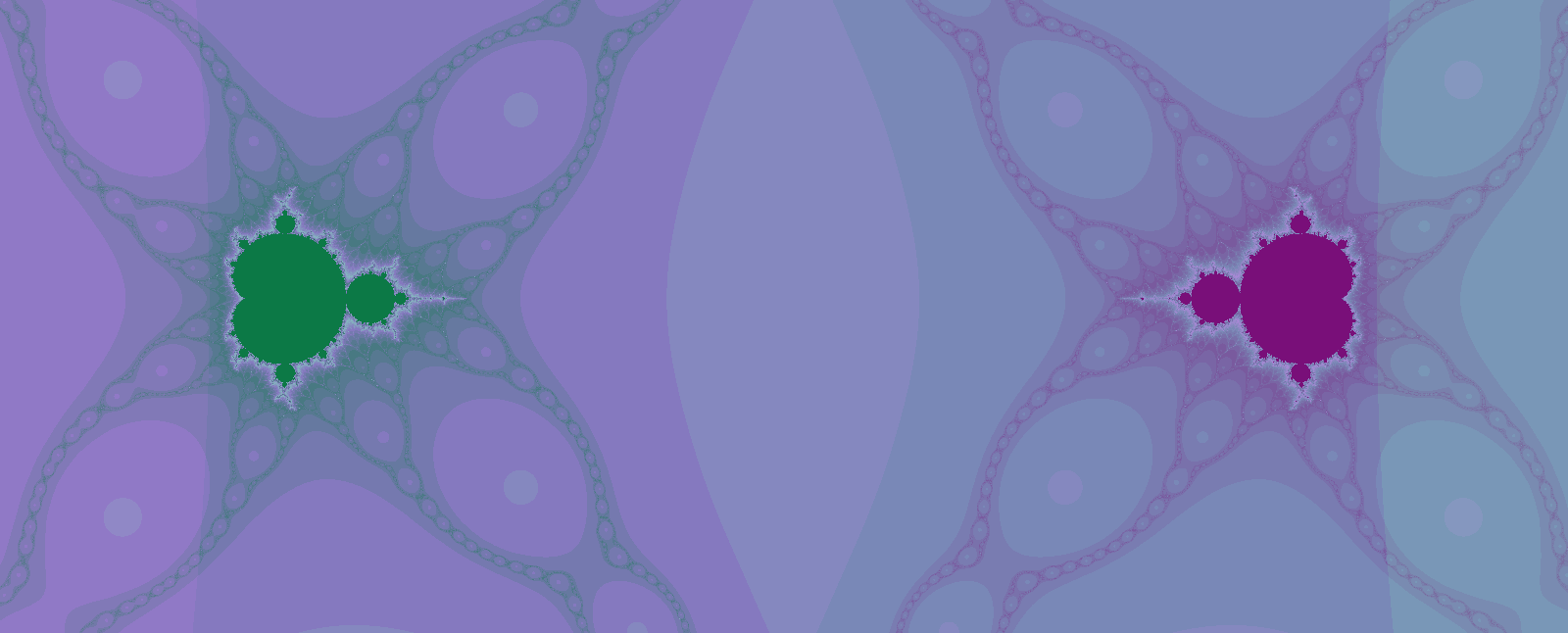} 

\includegraphics[width=0.95\textwidth,keepaspectratio]{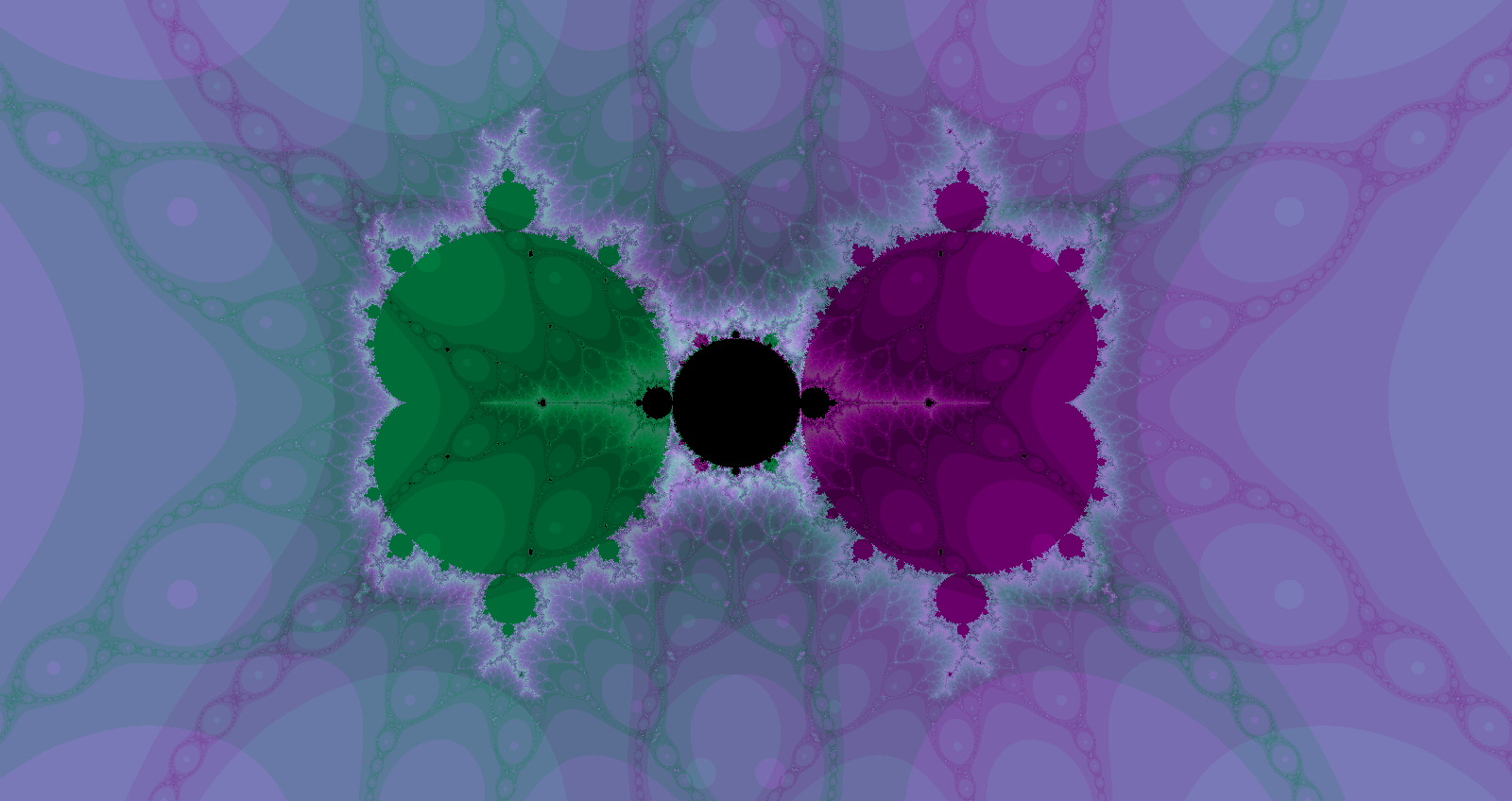} 
 
\includegraphics[width=0.95\textwidth,keepaspectratio]{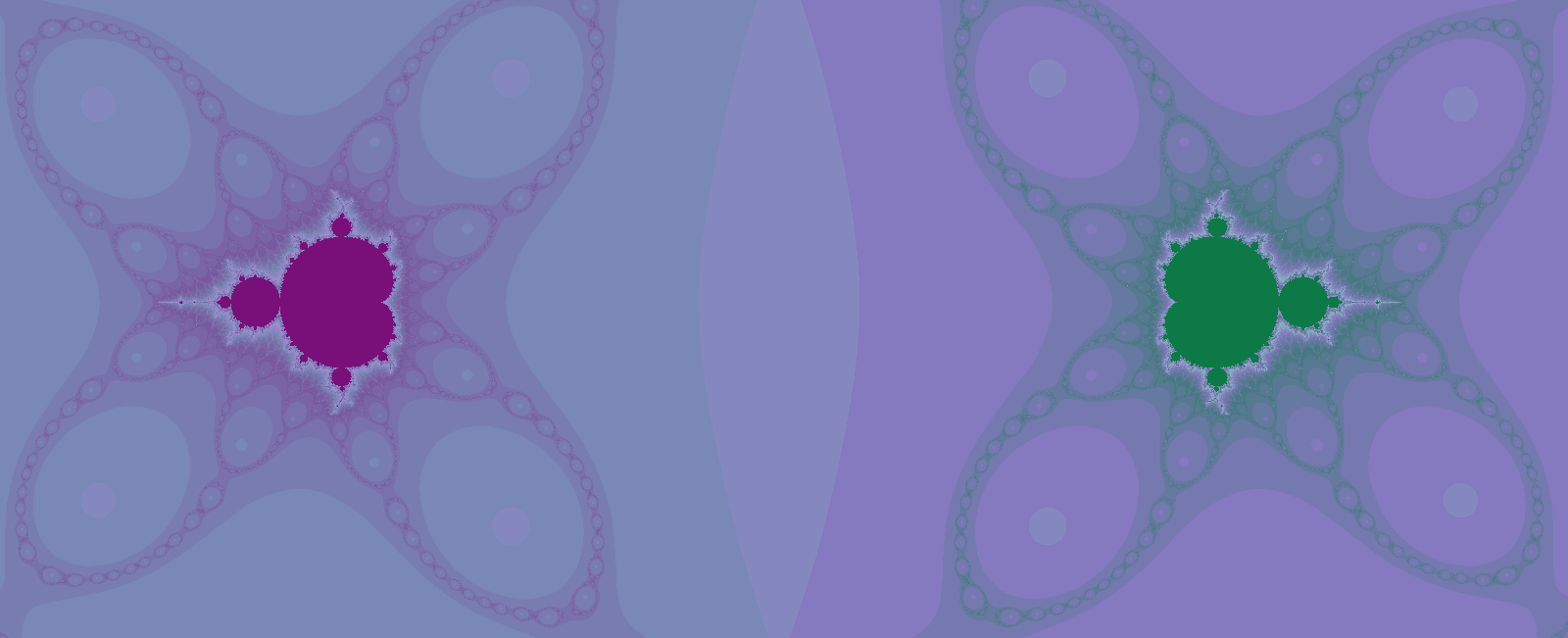} 
\caption{\label{Phases_of_mandels_intersecting_figure} Three pictures of the $c$-parameter plane of $R_{n,a,c}$, for $n=11$, showing various phases of baby $\mandel$'s intersecting (top: $a=0.175$, middle: $a=0.2098,$ bottom: $a=0.25$).}
\end{figure}


\bibliographystyle{alpha}
\bibliography{BoydMitchell_2023}

\end{document}